 \DeclareFontFamily{U}{min}{}
 \DeclareFontShape{U}{min}{m}{n}{<-> udmj30}{}
\tikzset{mono/.style={>-stealth}} 
\tikzset{epi/.style={-twotriang}} 
\tikzset{arrow/.style={->}}
\tikzset{arrowshorter/.style={->, shorten <=2pt, shorten >=2pt}}
\tikzset{twoarrowlonger/.style={double,double distance=1.5pt,
shorten <=5pt,shorten >=6pt,
decoration={markings,mark=at position -4pt with {\arrow[scale=1.75]{>}}},
preaction={decorate}}} 
\tikzset{mapstikz/.style={-stealth, 
decoration={markings,mark=at position 0pt with {\arrow[scale=0.5]{|}}}, preaction={decorate}}}
\tikzset{dot/.style={circle,draw,fill,inner sep=1pt}}
\theoremstyle{plain}
\newtheorem{prop}[equation]{Proposition}
\newtheorem{lemma}[equation]{Lemma}
\theoremstyle{definition}
\newtheorem{definition}[equation]{Definition}
\newtheorem{example}[equation]{Example}
\newtheorem{remark}[equation]{Remark}
\newtheorem{proposition}[equation]{Proposition}
\numberwithin{equation}{section}
\newcommand{\Ar}{\operatorname{Ar}}
\newcommand{\Cat}{\mathcal Cat}
\newcommand{\coker}{\operatorname{coker}}
\newcommand{\colim}{\operatorname{colim}}
\newcommand{\Deltaop}{\Delta^{\op}}
\newcommand{\Fun}{\operatorname{Fun}}
\newcommand{\ob}{\operatorname{ob}}
\newcommand{\op}{\operatorname{op}}
\newcommand{\Set}{\mathcal Set}
\newcommand{\SSets}{\mathcal{SS}ets}
\newcommand{\Top}{\mathcal Top}
\renewcommand*\env@matrix[1][\arraystretch]{%
  \edef\arraystretch{#1}%
  \hskip -\arraycolsep
  \let\@ifnextchar\new@ifnextchar
  \array{*\c@MaxMatrixCols c}}
\begin{document}









\begin{center}$2$-Segal maps associated to a category with cofibrations

Tanner Carawan
\end{center}

\tableofcontents \addtocontents{toc}
{\protect\thispagestyle{myheadings}\markright{}}

\newpage

\section{Abstract} \label{abstract}

\thispagestyle{myheadings}


Waldhausen's $S_\bullet$-construction gives a way to define the algebraic $K$-theory space of a category with cofibrations. Specifically, the $K$-theory space of a category with cofibrations $\mathcal{C}$ can be defined as the loop space of the realization of the simplicial topological space $|iS_\bullet \mathcal{C} |$. Dyckerhoff and Kapranov observed that if $\mathcal{C}$ is chosen to be a proto-exact category, then this simplicial topological space is 2-Segal. A natural question is then what variants of this $S_\bullet$-construction give 2-Segal spaces. We find that for $|iS_\bullet \mathcal{C}|$, $S_\bullet\mathcal{C}$, $wS_\bullet\mathcal{C}$, and the simplicial set whose $n$th level is the set of isomorphism classes of $S_\bullet\mathcal{C}$, there are certain $2$-Segal maps which are always equivalences. However for all of these simplicial objects, none of the rest of the $2$-Segal maps have to be equivalences. We also reduce the question of whether $|wS_\bullet \mathcal{C}|$ is $2$-Segal in nice cases to the question of whether a simpler simplicial space is $2$-Segal. Additionally, we give a sufficient condition for $S_\bullet \mathcal{C}$ to be $2$-Segal. Along the way we introduce the notion of a generated category with cofibrations and provide an example where the levelwise realization of a simplicial category which is not $2$-Segal is $2$-Segal.

\section{Introduction} \label{C1}
\thispagestyle{myheadings}

Algebraic $K$-theory is an area of math where groups indexed by integers are assigned to rings, exact categories, and other objects.
Algebraic $K$-theory groups often enjoy theorems like Additivity, Localization, and Approximation and this area of math has connections to number theory and topology. 

In the process of extending the collection of objects for which $K$-groups are defined to have an algebraic $K$-theory of spaces, Waldhausen gave the $S_\bullet$-construction which can be applied to an exact category or more generally to a category with cofibrations.


Let $\mathcal{E}$ be an exact category. The $S_\bullet$-construction produces a sequence of spaces, the $n$th one of which is the realization of a category $iS_n\mathcal{E}$ of diagrams built from the exact sequences of $\mathcal{E}$. This sequence of spaces determines a space called the $K$-theory space, and the homotopy groups of this space are the $K$-theory groups. 
    
Although the homotopy-theoretic properties of Waldhausen's $S_\bullet$-construction are still being investigated, two independent groups of authors, one consisting of Dyckerhoff and Kapranov \cite{DyckerhoffKapranov12} and another consisting of Gálvez-Carrillo, Kock, and Tonks \cite{decomposition}, noticed that if $\mathcal{E}$ is an exact category, then the sequence of spaces $|iS_n\mathcal{E}|$ has the structure of a $2$-Segal space. 

A sequence of spaces $X_n$, $n \geq 0$, form a $2$-Segal space if for every order preserving map $\alpha \colon \{0 \leq 1 \leq \cdots \leq m\} \to \{0 \leq 1 \leq \cdots \leq n\}$ (written $\alpha \colon [m] \to [n]$), there are compatibly chosen maps $X_n \to X_m$ such that certain induced maps $X_k \to X_2 \times^h_{X_1} X_2 \times^h_{X_1} \cdots \times^h_{X_1} X_2$, called $2$-Segal maps, are weak equivalences. We think of a $2$-Segal space as having a space of objects $X_0$, a space of morphisms $X_1$, and an up-to-homotopy sometimes-defined multivalued composition given by a span 
$X_1 \times^h_{X_0} X_1 \leftarrow X_2 \to X_1$; notice that the indices on the left are $0$ and $1$ and not $1$ and $2$. The $2$-Segal maps being weak equivalences says that this composition is associative up-to-homotopy. 

One benefit of the $S_\bullet$-construction is that it applies to a huge class of categories, categories with cofibrations. For $\mathcal{C}$ a category with cofibrations this work addresses whether the $2$-Segal maps from the $|iS_n\mathcal{C}|$ to certain iterated homotopy pullbacks are also weak homotopy equivalences. We find that a certain collection of them are, but none of the rest have to be. We also give a sufficient condition for $S_\bullet\mathcal{C}$ to be a $2$-Segal space. Finally, we do the preceeding with several variants of the $S_\bullet$-construction. Along the way, we introduce the notion of a generated category with cofibrations and provide an example where the levelwise realization of a non $2$-Segal space is $2$-Segal.  

The work is structured as follows. We begin with the inputs to variants of the $S_\bullet$-construction in Section 3. Then in Section 4 we review the $S_\bullet$-construction itself. Section 5 is a technical aside on homotopy limits. In Section 6 we prove our main result for a discrete version $\text{iso}(s_\bullet \mathcal{C})$ of the $S_\bullet$-construction. Then in Section 7 we prove our main result for a topological version $|iS_\bullet \mathcal{C}|$ of the $S_\bullet$-construction and we give our progress on analyzing the $2$-Segal maps of $|wS_\bullet\mathcal{C}|$ when $\mathcal{C}$ is a Waldhausen category. In Section 8 we prove our main result for the categorical $S_\bullet$-constructions $S_\bullet\mathcal{C}$ and $wS_\bullet\mathcal{C}$. In the final non-appendix section we provide a sufficient condition for $S_\bullet \mathcal{C}$ to be $2$-Segal. In the appendix we show that one of our central definitions is equivalent to a preexisting one, namely that left and right $2$-Segal are the same as lower and upper $2$-Segal. 

\section{Waldhausen Categories} \label{C2}
\thispagestyle{myheadings}

In this section, we review categories with cofibrations and Waldhausen categories because they are the inputs to the $S_\bullet$-construction. We also introduce a dual notion to that of a category with cofibrations, a category with fibrations.  

\subsection{Categories with cofibrations}

We begin by defining a category with cofibrations. In this section we include relevant terminology and facts about categories with cofibrations as well. 

\begin{definition}
 \cite[1.1]{Waldhausen85} A {category with cofibrations} is a category $\mathcal{C}$ with a zero object $0$ and a distinguished class of morphisms called the \emph{cofibrations}. These cofibrations and zero object are such that for any object
$A$ of $\mathcal{C}$, the unique map $0 \rightarrow A$ is a cofibration, every isomorphism is a cofibration, the composite of cofibrations is a cofibration, and the pushout of a cofibration always exists and is a cofibration. 
\end{definition}
We indicate that a morphism is a cofibration by drawing our arrows with a tail ($\rightarrowtail$).
\begin{example}
A motivating, and for our purpose central, example is $\mathcal{R}_f$, the category of finite based CW-complexes with specified CW structure and based cellular maps  \cite[9.1.4]{Weibel13}. In $\mathcal{R}_f$ the cofibrations are the cellular inclusions and the $0$ object is the singleton space.
\end{example}

One reason for having categories with cofibrations is to be able to consider cofibration sequences.



\begin{definition} 
A \emph{cofibration sequence} is a cofibration followed by its cokernel.  We denote the cokernel object of $i \colon A \rightarrowtail B$ by $\coker(i)$ and the cokernel morphism by $B \twoheadrightarrow \coker(i).$ We sometimes denote the cokernel object by $B/A$.
\end{definition}

\begin{definition}
A functor $F: \mathcal{C} \rightarrow \mathcal{D}$ between categories with cofibrations is called \emph{exact} when 
it preserves zero objects, cofibrations, and pushouts of cofibrations.  
\end{definition}


\begin{definition}
We say $\mathcal{A}$ is a \emph{subcategory with cofibrations} of a category with cofibrations $\mathcal{C}$ if the inclusion of $\mathcal{A}$ into $\mathcal{C}$ is an exact functor, and the cofibrations in $\mathcal{A}$ are the cofibrations of $\mathcal{C}$ with source and target in $\mathcal{A}$ and with cokernel in $\mathcal{A}$.
\end{definition}

The category whose objects are small categories with cofibrations and whose morphisms are exact functors, is called \textit{the category of categories with cofibrations} and we denote it by $\textit{Cof}$.

Now we present the dual notion to a category with cofibrations. By dual notion, we mean that the opposite category of a category with cofibrations is a category with fibrations and vice versa. Everything that follows in later sections about categories with cofibrations has a dual version for categories with fibrations.

\begin{definition}
A \emph{category with fibrations} is a category $\mathcal{C}$ with a zero object and a distinguished class of morphisms called the \emph{fibrations}. These fibrations and zero object are such that for any object
$A$ of $\mathcal{C}$, the unique map $A \rightarrow 0$ is a fibration, every isomorphism is a fibration, the composite of fibrations is a fibration, and the pullback of a fibration always exists and is a fibration. 
\end{definition}

We indicate that a morphism is a fibration by drawing our arrows with two heads ($\twoheadrightarrow$). This notation does not mean however that the morphism is the cokernel of a cofibration. Similarly, in a category with fibrations we denote that a morphism is the kernel of a fibration by drawing our arrow with a tail ($\rightarrowtail$). We call a fibration preceded by its kernel a fibration sequence. 

We denote the category whose objects are categories with fibrations by $\textit{Fib}$.

\begin{example}
Consider the category $\text{Vect}_k$ of $k$-vector spaces and linear maps. The $0$-dimensional $k$-vector space together with the collection of surjective linear maps give this category the structure of a category with fibrations. 
\end{example}

\subsection{Waldhausen categories}

 Waldhausen categories are categories with cofibrations, together with another class of morphisms meant to behave like weak homotopy equivalences.

\begin{definition}\cite[1.2]{Waldhausen85}
A \emph{Waldhausen category} is a category with cofibrations that has a class $w$ of morphisms called 
\emph{weak equivalences}, such that all isomorphisms are weak equivalences, $w$ is closed under 
composition, and pushouts along cofibrations are unique up to weak equivalence. To be precise, this last condition means that if in the commutative diagram 
\[\begin{tikzcd}[sep = small,ampersand replacement=\&]
	B \&\& A \&\& C \\
	\\
	{B'} \&\& {A'} \&\& {C'}
	\arrow[tail, from=1-3, to=1-1]
	\arrow[from=1-3, to=1-5]
	\arrow[from=1-1, to=3-1]
	\arrow[from=1-3, to=3-3]
	\arrow[from=1-5, to=3-5]
	\arrow[tail, from=3-3, to=3-1]
	\arrow[from=3-3, to=3-5]
\end{tikzcd}\] the vertical arrows arrows are in $w$, then the induced map $B\cup_A C \to B'\cup_{A'}C'$ is in $w$. We write $w\mathcal{C}$ for the subcategory of $\mathcal{C}$ whose collection of morphisms is $w$ and co $w\mathcal{C}$ for the subcategory of $\mathcal{C}$ whose collection of morphisms is the cofibrations in $w$. The maps in co $w\mathcal{C}$ are called \emph{acyclic cofibrations}.

\end{definition}


\begin{example}
Let $\mathcal{C}$ be a category with cofibrations. Then we can make $\mathcal{C}$ a Waldhausen category by taking the weak equivalences to be the isomorphisms.
\end{example}

\begin{example}
Another example is given by $\mathcal{R}_f$. Here the weak equivalences are the weak homotopy equivalences. 
\end{example}


There are some commonly ascribed additional properties a Waldhausen category might have. To define these properties it is convenient to give names to some categories one can construct given a category with cofibrations. 

\begin{definition}
Let $\mathcal{C}$ be a category. Its \emph{arrow category} $\Ar(\mathcal{C})$ is a category whose objects are morphisms in $\mathcal{C}$ and whose morphisms are commutative squares in $\mathcal{C}$.
\end{definition}

We denote a morphism from $f \colon X \to Y$ to $f' \colon X' \to Y'$, depicted as \[\begin{tikzcd}[sep = small]
	X && {X'} \\
	\\
	Y && {Y'}
	\arrow["f"', from=1-1, to=3-1]
	\arrow["{f'}", from=1-3, to=3-3]
	\arrow["\phi", from=1-1, to=1-3]
	\arrow["\psi"', from=3-1, to=3-3]
\end{tikzcd}\] by $(\phi, \psi) \colon f \to f'$. Two functors that one always has are the source and target functors $S\colon \text{Ar}(\mathcal{C}) \to \mathcal{C}$ and $T \colon \text{Ar}(\mathcal{C}) \to \mathcal{C}$ which take morphisms in $\mathcal{C}$ to their sources and targets, respectively. If $\mathcal{C}$ is a category with cofibrations, then $\Ar(\mathcal{C})$ is a category with cofibrations, where a cofibration is given by a pair of cofibrations in $\mathcal{C}$. The category $\Ar(\mathcal{C})$ is Waldhausen with weak equivalences given by pairs of weak equivalences.  

\begin{definition}
The full subcategory of $\Ar(\mathcal{C})$ on the cofibrations of $\mathcal{C}$, which we denote by $F_1 \mathcal{C}$, is a category with cofibrations the maps $(A \rightarrowtail B) \to (A' \rightarrowtail B')$ such that $A \rightarrowtail A'$, $B \rightarrowtail B'$, and $A' \cup_A B \rightarrowtail B'$ are cofibrations in $\mathcal{C}$. The category $F_1 \mathcal{C}$ is Waldhausen with weak equivalences the component-wise weak equivalences.
\end{definition}

\begin{definition}
A Waldhausen category is \emph{saturated} if whenever $gf$ is a weak equivalence, then $f$ is a weak equivalence if and only if $g$ is a weak equivalence. 
\end{definition}

Motivated by mapping cylinders of spaces, one has the following definition. 

\begin{definition}
Let $\mathcal{C}$ be a Waldhausen category.  
A \emph{mapping cylinder functor} on $\mathcal{C}$ is a functor from $\text{Ar}(\mathcal{C})$ to the category of diagrams in $\mathcal{C}$ taking $f \colon A \to B$ to a diagram 

\[\begin{tikzcd}[sep = small]
	{S(f)} && {M(f)} && {T(f)} \\
	\\
	&& {T(f)}
	\arrow["{i_f}", from=1-1, to=1-3]
	\arrow["{j_f}"', from=1-5, to=1-3]
	\arrow["f"', from=1-1, to=3-3]
	\arrow["r_f"', from=1-3, to=3-3]
	\arrow["{=}", from=1-5, to=3-3]
\end{tikzcd}\] where $M(f)$ is called the cylinder of $f$. The induced natural transformations, $i\colon S \to M$, $j \colon T \to M$, and $r\colon M \to T$ are called the \emph{top inclusion}, \emph{bottom inclusion}, and \emph{retract}, respectively. The functor $M$ and the natural transformations must satisfy the following axioms. 

\begin{itemize}

\item (MapCyl 1) The functor $i \sqcup j \colon \Ar(\mathcal{C}) \to F_1C$ is exact. 
	
\item (MapCyl 2) For every object $Y$ in $\mathcal{C}$, $M(0 \to Y) = Y$ with $r_{0 \to Y}$ and
$j_{0 \to Y}$ the identity. 

\end{itemize}

We say that $\mathcal{C}$ satisfies the mapping cylinder axiom if it has a mapping cylinder such that for all $f\colon X \to Y$ in $\mathcal{C}$, $r_f \colon M(f) \to Y$ is a weak equivalence.
\end{definition}

Notice that these axioms imply the following.

\begin{itemize}
\item (MapCyl 3) For every morphism $(\phi, \psi) \colon f \to f'$ in $\text{Ar}(\mathcal{C})$, 
$M((\phi, \psi)) \colon M(f) \to M(f')$ is a weak equivalence if $\phi$ and $\psi$ are weak equivalences.

\item (MapCyl 4) For every morphism $(\phi, \psi) \colon f \to f'$ in $\text{Ar}(\mathcal{C})$, 
$M((\phi, \psi)) \colon M(f) \to M(f')$ is a cofibration if $\phi$ and $\psi$ are cofibrations.

\item (MapCyl 5) For every morphism $(\phi, \psi) \colon f \to f'$ in $\text{Ar}(\mathcal{C})$, the diagram \[\begin{tikzcd}[sep = small]
	{X \sqcup Y} && {M(f)} && Y \\
	\\
	{X' \sqcup Y'} && {M(f')} && {Y'}
	\arrow["\phi\sqcup\psi"', from=1-1, to=3-1]
	\arrow["{i_f \sqcup j_f}", from=1-1, to=1-3]
	\arrow["{i_{f'} \sqcup j_{f'}}"', from=3-1, to=3-3]
	\arrow["{M((\phi,\psi))}"', from=1-3, to=3-3]
	\arrow["{r_{f'}}"', from=3-3, to=3-5]
	\arrow["\psi", from=1-5, to=3-5]
	\arrow["{r_f}", from=1-3, to=1-5]
\end{tikzcd}\] commutes. Furthermore, if $\phi$ and $\psi$ are cofibrations, then the induced map 
$(X' \sqcup Y') \bigsqcup_{X \sqcup Y} M(f) \to M(f')$ is a cofibration. 

\item(MapCyl 6) Given a diagram of the form 
\[\begin{tikzcd}[sep = small]
	A && B \\
	\\
	{A'} && {B'}
	\arrow["f", from=1-1, to=1-3]
	\arrow[tail, from=1-1, to=3-1]
	\arrow["{f'}"', from=3-1, to=3-3]
	\arrow[tail, from=1-3, to=3-3]
\end{tikzcd}\] in $\mathcal{C}$, there is a commutative diagram 

\[\begin{tikzcd}[sep = small]
	{A\sqcup B} && {M(f)} \\
	\\
	{A' \sqcup B'} && {M(f')} \\
	\\
	{A'/A \sqcup B'/B} && {M(f'/f)}.
	\arrow[tail, from=1-1, to=1-3]
	\arrow[tail, from=3-1, to=3-3]
	\arrow[tail, from=1-1, to=3-1]
	\arrow[tail, from=1-3, to=3-3]
	\arrow[two heads, from=3-1, to=5-1]
	\arrow[two heads, from=3-3, to=5-3]
	\arrow[tail, from=5-1, to=5-3]
\end{tikzcd}\] where $f'/f$ is the map from $A'/A$ to $B'/B$ induced by $f$ and $f'$.

\end{itemize}

\begin{proposition}

Let $\mathcal{C}$ be a saturated Waldhausen category together with a functor $M \colon \textrm{Ar}(\mathcal{C}) \to \mathcal{C}$ and natural transformations $r \colon M \to T$, $i \colon S \to M$, and $j \colon T \to M$, where $T$ is the target functor and $S$ is the source functor. Suppose that the $f$ component of $r$, $r_f$, is a weak equivalence when $f$ is a weak equivalence and that $M$ satisfies (MapCyl1), (MapCyl3) and (MapCyl4). Then the inclusion $\iota \colon$co $w\mathcal{C} \to w\mathcal{C}$ is a homotopy equivalence \cite{Weibel13}[IV, Exercise 8.15].
\end{proposition}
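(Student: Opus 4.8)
The plan is to prove this with Quillen's Theorem A: it suffices to show that for every object $Y$ of $w\mathcal{C}$ the comma category $\iota/Y$ is contractible. Its objects are pairs $(Z,\gamma)$ with $\gamma\colon Z\to Y$ a weak equivalence, and a morphism $(Z,\gamma)\to(Z',\gamma')$ is an acyclic cofibration $h\colon Z\rightarrowtail Z'$ with $\gamma'h=\gamma$. First I would record the mapping-cylinder facts the argument needs. For a weak equivalence $\gamma\colon Z\to Y$, (MapCyl1) says $(i\sqcup j)(\gamma)=\big(Z\sqcup Y\rightarrowtail M(\gamma)\big)$ is an object of $F_1\mathcal{C}$, so $Z\sqcup Y\rightarrowtail M(\gamma)$ is a cofibration; composing with the coprojections (themselves cofibrations) shows $i_\gamma\colon Z\to M(\gamma)$ and $j_\gamma\colon Y\to M(\gamma)$ are cofibrations. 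By hypothesis $r_\gamma$ is a weak equivalence, and since $r_\gamma i_\gamma=\gamma$ and $r_\gamma j_\gamma=\id_Y$ are weak equivalences, saturation forces $i_\gamma$ and $j_\gamma$ to be weak equivalences. Thus $i_\gamma$ and $j_\gamma$ are acyclic cofibrations.

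Next I would define an endofunctor $\Phi\colon\iota/Y\to\iota/Y$ by $(Z,\gamma)\mapsto(M(\gamma),r_\gamma)$, a legal object since $r_\gamma$ is a weak equivalence. A morphism $h\colon(Z,\gamma)\to(Z',\gamma')$ is exactly a map $\gamma\to\gamma'$ in $\Ar(\mathcal{C})$ with components $(h,\id_Y)$, and I send it to $M(h,\id_Y)$. By (MapCyl4) this is a cofibration and by (MapCyl3) a weak equivalence, hence a morphism of $\co w\mathcal{C}$; naturality of $r$ together with $T(h,\id_Y)=\id_Y$ gives $r_{\gamma'}\circ M(h,\id_Y)=r_\gamma$, so $M(h,\id_Y)$ is a morphism $(M(\gamma),r_\gamma)\to(M(\gamma'),r_{\gamma'})$ in $\iota/Y$ and $\Phi$ is a functor.

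Then I would contract $\iota/Y$ with a zig-zag of natural transformations relating the identity and a constant functor to $\Phi$. The map $i_\gamma$ defines a natural transformation from $\id_{\iota/Y}$ to $\Phi$: at $(Z,\gamma)$ it is the morphism $i_\gamma\colon(Z,\gamma)\to(M(\gamma),r_\gamma)$, which is legal because $i_\gamma$ is an acyclic cofibration and $r_\gamma i_\gamma=\gamma$, and its naturality is that of $i\colon S\to M$. The map $j_\gamma$ defines a natural transformation from the constant functor at $(Y,\id_Y)$ to $\Phi$: at $(Z,\gamma)$ it is $j_\gamma\colon(Y,\id_Y)\to(M(\gamma),r_\gamma)$, legal because $j_\gamma$ is an acyclic cofibration and $r_\gamma j_\gamma=\id_Y$, and its naturality is that of $j\colon T\to M$. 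Since a natural transformation induces a homotopy of the induced maps on nerves, this exhibits the identity of $\iota/Y$ as homotopic to a constant map, so $\iota/Y$ is contractible and Theorem A completes the proof.

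The hard part is getting the orientation right. One must use the over-category $\iota/Y$, not the under-category: this is what puts the retraction $r_\gamma$, the weak equivalence we actually control, in the role of the structure map of $\Phi(Z,\gamma)$, and makes the triangle identities $r_\gamma i_\gamma=\gamma$ and $r_\gamma j_\gamma=\id_Y$ coincide with the two conditions that $i_\gamma$ and $j_\gamma$ be morphisms of $\iota/Y$. (In the under-category one is instead forced to use $r_\gamma$ as a morphism of $\co w\mathcal{C}$, which fails since $r_\gamma$ need not be a cofibration.) The remaining technical point is the first paragraph's deduction that $i_\gamma,j_\gamma$ are acyclic cofibrations, which is exactly where (MapCyl1), saturation, and the hypothesis on $r$ enter.
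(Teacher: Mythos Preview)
Your proof is correct and follows essentially the same approach as the paper's: both apply Quillen's Theorem~A by showing each comma category $\iota/Y$ is contractible via the endofunctor $(Z,\gamma)\mapsto(M(\gamma),r_\gamma)$ together with the zig-zag of natural transformations furnished by $i$ and $j$. Your version is in fact more careful, spelling out why $i_\gamma$ and $j_\gamma$ are acyclic cofibrations and why the over-category (rather than the under-category) is the correct choice.
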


\begin{proof}
This proof comes from \cite{Warner}[Section 18, Proposition 3]. By Quillen's Theorem A \cite[1]{Quillen73}, it suffices to show that $\iota$ is initial. We must show that for every $Y \in ob(w\mathcal{C})$, $\iota / Y$ is contractible. An object of $\iota / Y$ is an object $X$ in $\mathcal{C}$ together with a weak equivalence $f$ from $X$ to $Y$. A morphism in $\iota / Y$ from $(X, f)$ to $(Z, g)$ is an acyclic cofibration $h \colon X \to Z$ such that $gh = f$. We define a functor $m \colon \iota /Y \to \iota /Y$ that sends $(X, f) \to (M(f), r_f)$. The fact that $m$ is a functor follows from $r$ being a natural transformation, $r_f$ being a weak equivalence, and $M$ satisfying (MapCyl3) and (MapCyl4). Now $i$ defines a natural transformation $id_{\iota/Y} \to m$ using (MapCyl1), the assumption that $\mathcal{C}$ is saturated, and that $i$ is a natural transformation from $S$ to $M$. Similarly, $j$ defines a natural transformation from the constant functor with image object $(id_y, Y)$ to $m$. We therefore have homotopies from the identity on $|\iota/Y|$ to $|m|$ and from a constant map from $\iota/Y \to \iota /Y$ to $|m|$. We conclude that $\iota/Y$ is contractible, and since $Y$ was arbitrary, $|\iota|$ is a homotopy equivalence.  
\end{proof}

\section{The $S_\bullet$-construction and 2-Segal objects} \label{C3}
\thispagestyle{myheadings}

\subsection{The $S_\bullet$-construction}
A sequence of cofibrations in $\mathcal{C}$ together with choices of cokernels gives an object in the category $S_n\mathcal{C}$. The categories $S_n\mathcal{C}$ taken together with certain functors between them form the simplicial category $S_\bullet\mathcal{C}$. This construction was given in Waldhausen's paper \cite[\S 1]{Waldhausen85} en route to creating an algebraic $K$-theory of spaces. We now review 
this construction and $2$-Segal objects. 

\begin{definition}
The category $\Delta$, called the \emph{simplex category}, has objects the ordered sets $[n]\colon = \{0 \leq 1 \leq \cdots \leq n\}$ and morphisms order-preserving maps. 
\end{definition}

We note that if we view $[n]$ as a poset category, then $\Delta$ can be viewed as the full subcategory of the category of small categories $\Cat$, on those categories of the form $[n]$ for some nonnegative integer $n$. It is often helpful to know that every morphism in 
$\Delta$ is the composite of face and degeneracy maps. The face maps $\delta^n_i \colon [n - 1] \to [n]$ are the injective maps "skipping $i$", and the degeneracy maps $\sigma^n_i \colon [n + 1] \to [n]$ are the surjective maps "hitting $i$ twice". Given a functor $X \colon \Delta^{op} \to \mathcal{D}$, a simplicial object in $\mathcal{D}$, we denote $X([n])$ by $X_n$ and we denote the image of $\delta^n_i$ by $d_i$ and the image of $\sigma^n_i$ by $s_i$. 

\begin{definition}
Given categories $\mathcal{A}$ and $\mathcal{B}$, the \emph{category of functors} $\Fun(\mathcal{A}, \mathcal{B})$ has functors from $\mathcal{A}$ to $\mathcal{B}$ as its objects and natural transformations as its morphisms. 
\end{definition}

\begin{example}
The arrow category of $\mathcal{C}$ (Definition 3.11) may be identified with $\Fun([1],\mathcal{C})$. 
\end{example}

\begin{example}
Fix $n \in \mathbb{N}$. Viewing $[n]$ as a poset category we consider $\Ar[n]$. Any morphism $i \to j$ in $[n]$ may be identified with the pair $(i,j)$ where $i \leq j$. Any commutative square
\[\begin{tikzcd}[sep = small]
	i && j \\
	\\
	{i'} && {j'}
	\arrow[from=1-1, to=1-3]
	\arrow[from=1-1, to=3-1]
	\arrow[from=3-1, to=3-3]
	\arrow[from=1-3, to=3-3]
\end{tikzcd}\] in $[n]$ may be identified with two pairs, $(i,j)$ and $(i', j')$, where $i \leq j$ and $i' \leq j'$, and with $i \leq i'$ and $j \leq j'$. In other words, $\Ar[n]$ may be identified with the poset category whose objects are pairs $(i,j)$ with $0 \leq i \leq j \leq n$, and where $(i,j) \leq (i', j')$ if and only if $i \leq i'$ and $j \leq j'$. The diagram 
\[\begin{tikzcd}[sep = small]
	&&&&&&&& {(n,n)} \\
	\\
	&&&&&&&& \vdots \\
	\\
	&&&& {(2,2)} && \cdots && {(2.n)} \\
	\\
	&& {(1,1)} && {(1,2)} && \cdots && {(1,n)} \\
	\\
	{(0,0)} && {(0,1)} && {(0,2)} && \cdots && {(0,n)} \\
	&&&&&& {}
	\arrow[from=9-1, to=9-3]
	\arrow[from=9-3, to=9-5]
	\arrow[from=9-5, to=9-7]
	\arrow[from=9-7, to=9-9]
	\arrow[from=9-3, to=7-3]
	\arrow[from=7-3, to=7-5]
	\arrow[from=9-5, to=7-5]
	\arrow[from=7-5, to=7-7]
	\arrow[from=7-7, to=7-9]
	\arrow[from=7-5, to=5-5]
	\arrow[from=5-5, to=5-7]
	\arrow[from=5-7, to=5-9]
	\arrow[from=9-9, to=7-9]
	\arrow[from=7-9, to=5-9]
	\arrow[from=5-9, to=3-9]
	\arrow[from=3-9, to=1-9]
\end{tikzcd}\]
gives a picture of this category. 
\end{example}

\begin{definition}
For $\mathcal{C}$ a category with cofibrations, $S_n\mathcal{C}$ is the full subcategory of the category of functors  $A \colon \Ar[n] \rightarrow \mathcal{C}, (i,j) \mapsto A_{i,j}$ on those functors with $A_{j,j} = 0$ for every $j$, and $A_{i,j} \rightarrowtail A_{i,k} \twoheadrightarrow A_{j,k}$ a cofibration sequence for every $i < j < k$. For $\mathcal{C}$ a category with fibrations $S_n\mathcal{C}$ is the full subcategory of $\Fun(\Ar[n], \mathcal{C}$) on functors $A$ with $A_{j,j} = 0$ for every $j$ and $A_{i,j} \rightarrowtail A_{i,k} \twoheadrightarrow A_{j,k}$ a fibration sequence for every $i < j < k$. 
\end{definition}

Waldhausen \cite[1.3]{Waldhausen85}  showed that $S_n \mathcal{C}$ is a category with cofibrations whenever $\mathcal{C}$ is. 

\begin{prop}
The assignment $[n] \mapsto S_n\mathcal{C}$ defines a functor $S_\bullet\mathcal{C} \colon  \Deltaop \rightarrow \textit{Cof}.$
\end{prop}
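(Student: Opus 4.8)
The plan is to construct $S_\bullet\mathcal{C}$ on morphisms by precomposition and then to verify the three things a functor valued in $\textit{Cof}$ requires: that each induced map lands in the correct category, that it is exact, and that the assignment respects composition and identities. First I would recall from the Example above that $\Ar[n]$ is the poset of pairs $(i,j)$ with $0 \le i \le j \le n$. An order-preserving map $\alpha \colon [m] \to [n]$, viewed as a functor of poset categories, therefore induces a functor $\Ar(\alpha) \colon \Ar[m] \to \Ar[n]$ by $(i,j) \mapsto (\alpha(i),\alpha(j))$, which is well defined since $\alpha(i) \le \alpha(j)$. Precomposition with $\Ar(\alpha)$ yields a functor $\alpha^* \colon \Fun(\Ar[n],\mathcal{C}) \to \Fun(\Ar[m],\mathcal{C})$ sending $A$ to $A \circ \Ar(\alpha)$, and I would define $S_\bullet\mathcal{C}(\alpha) := \alpha^*$. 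Note this is contravariant, as required for a functor out of $\Deltaop$.

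The first substantive step is to show $\alpha^*$ carries $S_n\mathcal{C}$ into $S_m\mathcal{C}$. The diagonal condition is immediate, since $(\alpha^*A)_{j,j} = A_{\alpha(j),\alpha(j)} = 0$. The cofibration-sequence condition asks, for each $i < j < k$ in $[m]$, that $A_{\alpha(i),\alpha(j)} \rightarrowtail A_{\alpha(i),\alpha(k)} \twoheadrightarrow A_{\alpha(j),\alpha(k)}$ be a cofibration sequence. When $\alpha$ is strictly increasing on $\{i,j,k\}$ this is literally one of the defining sequences of $A$. I expect the cases where $\alpha$ collapses two of these indices to be the main obstacle: there one object becomes $0$ and the other two coincide, so the sequence reduces to $0 \rightarrowtail X \xrightarrow{=} X$ or $X \xrightarrow{=} X \twoheadrightarrow 0$. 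I would verify these are genuine cofibration sequences using the axioms that $0 \to X$ is a cofibration with cokernel $X$ and that an isomorphism is a cofibration with cokernel $0$.

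Next I would check $\alpha^*$ is exact. Preservation of the zero object is clear, as precomposition sends the constant functor at $0$ to the constant functor at $0$. Because pushouts along cofibrations in $S_n\mathcal{C}$ are formed pointwise in $\mathcal{C}$ (part of Waldhausen's proof that $S_n\mathcal{C}$ is a category with cofibrations) and precomposition preserves pointwise colimits, $\alpha^*$ preserves pushouts of cofibrations. For preservation of cofibrations I would invoke Waldhausen's characterization of a cofibration $A \rightarrowtail B$ in $S_n\mathcal{C}$ in terms of certain pushout-corner maps in $\mathcal{C}$ being cofibrations; precomposing with $\Ar(\alpha)$ reindexes these conditions over a sub-collection of pairs, with the extra degenerate instances again holding automatically by the $0$-and-isomorphism argument of the previous step, so the cofibration conditions for $\alpha^*A \rightarrowtail \alpha^*B$ follow from those for $A \rightarrowtail B$.

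Finally, functoriality is formal: $\Ar$ is a functor on $\Cat$, so $\Ar(\beta\alpha) = \Ar(\beta)\Ar(\alpha)$ and $\Ar(\id_{[n]}) = \id$, while precomposition is contravariantly functorial, giving $(\beta\alpha)^* = \alpha^*\beta^*$ and $\id^* = \id$. Combined with the previous steps this exhibits $[n] \mapsto S_n\mathcal{C}$, $\alpha \mapsto \alpha^*$, as a functor $\Deltaop \to \textit{Cof}$. The only place where real work is needed is the bookkeeping forced by $\alpha$ being merely order-preserving rather than injective, namely the degenerate sequences appearing in the well-definedness and cofibration-preservation steps; everything else is either immediate or cited from Waldhausen.
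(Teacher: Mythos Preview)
Your proposal is correct and follows the same approach as the paper: both define the structure maps by precomposition with $\Ar(\alpha)$ and then appeal to Waldhausen's description of the cofibration structure on $S_n\mathcal{C}$ for exactness. The paper's proof is considerably terser---it simply observes that precomposition gives a functor $\Deltaop \to \Cat$ and then cites \cite[1.1]{Waldhausen85} for the exactness of the structure maps, without spelling out the well-definedness check or the degenerate-sequence bookkeeping that you (rightly) flag as the only place requiring care.
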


\begin{proof}
Objects of $S_m\mathcal{C}$ are also functors $\Ar [m] \to \mathcal{C}$. The map $[n] \to [m]$ in $\Delta$ induces a map $\Ar[n] \to \Ar[m]$ which in turn induces a map 
$S_m\mathcal{C} \to S_n\mathcal{C}$ given by precomposition. Therefore our assignment is a functor from $\Delta^{op} \to \Cat$. We have mentioned that $S_n\mathcal{C}$ is a category with cofibrations so it remains to check that the images of morphisms in $\Delta^{op}$ are exact. See \cite[1.1]{Waldhausen85}. 

\end{proof}

Similarly, if $\mathcal{C}$ is a category with fibrations then $[n] \mapsto S_n\mathcal{C}$ defines a functor $\Delta^{op} \to \textit{Fib}$. 

\begin{definition}
There is a forgetful functor $\Cat \rightarrow \Set$ that sends a small category to its set of objects and a functor to the corresponding function on sets of objects. We denote the composite $\Delta^{\op} \xrightarrow{S_\bullet\mathcal{C}} \Cat \rightarrow \Set$ by $s_\bullet \mathcal{C}$.
\end{definition}

Each simplicial map $\alpha \colon [m] \to [n]$  induces a map between the set of isomorphism classes of the objects of $S_n\mathcal{C}$ and the set of isomorphism classes of the objects of $S_m\mathcal{C}$

\begin{definition}
We denote by $\text{iso}(s_\bullet \mathcal{C})$ the simplicial set that
takes $[n]$ to $\text{iso}(s_n\mathcal{C})$, the set of isomorphism classes of objects in $S_n\mathcal{C}$. 
\end{definition}

Since a Waldhausen category is also a category with cofibrations, we may apply the $\text{iso}(s_\bullet -$),$s_\bullet$-, and $S_\bullet$-constructions. For each $n$, $S_n \mathcal{C}$ is a Waldhausen category where a weak equivalence is a component-wise weak equivalence. The subcategory of weak equivalences in $S_n\mathcal{C}$ is denoted by $wS_n\mathcal{C}$. Since $\mathcal{C}$ is also a Waldhausen category with weak equivalences the isomorphisms, we can consider $S_n\mathcal{C}$ as a Waldhausen category whenever $\mathcal{C}$ is a category with cofibrations. 

\begin{proposition}
\cite[1.6.1]{Waldhausen85}
If $\mathcal{C}$ is a saturated Waldhausen category satisfying the mapping cylinder axiom, then $S_n\mathcal{C}$ satisfies the hypotheses of Proposition 3.15. 
\end{proposition}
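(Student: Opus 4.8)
The plan is to construct the mapping cylinder functor on $S_n\mathcal{C}$ by applying the given functor $M$ on $\mathcal{C}$ entrywise, and then to verify each hypothesis of Proposition 3.15 for this functor by reducing it to the corresponding property of $M$ on $\mathcal{C}$. Recall that an object of $S_n\mathcal{C}$ is a diagram $A \colon \Ar[n] \to \mathcal{C}$ and that a morphism $f \colon A \to B$ in $S_n\mathcal{C}$ is the same datum as a functor $\Ar[n] \to \Ar(\mathcal{C})$ sending $(i,j)$ to the component $f_{i,j} \colon A_{i,j} \to B_{i,j}$. I would therefore define $\widetilde{M}(f) \colon \Ar[n] \to \mathcal{C}$ to be the composite of this functor with $M \colon \Ar(\mathcal{C}) \to \mathcal{C}$, so that $\widetilde{M}(f)_{i,j} = M(f_{i,j})$, and I would induce the transformations $\widetilde{r}, \widetilde{i}, \widetilde{j}$ entrywise from $r, i, j$. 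Functoriality of $\widetilde{M}$ is immediate from functoriality of $M$, so the first real task is to check that $\widetilde{M}(f)$ actually lies in $S_n\mathcal{C}$.

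For this I would verify the two defining conditions of $S_n\mathcal{C}$. The diagonal condition $\widetilde{M}(f)_{j,j} = 0$ follows from (MapCyl2): since $A_{j,j} = B_{j,j} = 0$, the component $f_{j,j}$ is the map $0 \to 0$, and $M(0 \to 0) = 0$. The cofibration-sequence condition is exactly the content of (MapCyl6): restricting $f$ to the cofibration sequences $A_{i,j} \rightarrowtail A_{i,k} \twoheadrightarrow A_{j,k}$ and $B_{i,j} \rightarrowtail B_{i,k} \twoheadrightarrow B_{j,k}$ gives precisely the input diagram of (MapCyl6) (with $A_{i,j} \rightarrowtail A_{i,k}$ and $B_{i,j} \rightarrowtail B_{i,k}$ as the vertical cofibrations and $f_{j,k}$ as the induced map on quotients), whose output is the cofibration sequence $M(f_{i,j}) \rightarrowtail M(f_{i,k}) \twoheadrightarrow M(f_{j,k})$, i.e. $\widetilde{M}(f)_{i,j} \rightarrowtail \widetilde{M}(f)_{i,k} \twoheadrightarrow \widetilde{M}(f)_{j,k}$, as required.

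The remaining hypotheses of Proposition 3.15 I would then check entrywise, using that weak equivalences in $S_n\mathcal{C}$ are the entrywise weak equivalences. Saturation of $S_n\mathcal{C}$ follows from saturation of $\mathcal{C}$: if $gf$ and $f$ are entrywise weak equivalences, then saturation of $\mathcal{C}$ forces each $g_{i,j}$ to be a weak equivalence, and symmetrically. That $\widetilde{r}_f$ is a weak equivalence whenever $f$ is one, and that $\widetilde{M}$ satisfies (MapCyl3), are then immediate, since each entry $\widetilde{r}_f{}_{,\,i,j} = r_{f_{i,j}}$ and $\widetilde{M}((\phi,\psi))_{i,j} = M((\phi_{i,j},\psi_{i,j}))$ is a weak equivalence by the hypothesis on $r$ and by (MapCyl3) for $\mathcal{C}$.

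The main obstacle will be (MapCyl1) and (MapCyl4), because these involve the cofibration structure of $S_n\mathcal{C}$ (and of $F_1 S_n\mathcal{C}$), which is \emph{not} merely entrywise: a cofibration in $S_n\mathcal{C}$ demands that certain maps induced out of iterated pushouts be cofibrations, not only that each component be one. Here I would feed the morphism-of-cofibration-sequences data supplied by a cofibration in $S_n\mathcal{C}$ into the refined axioms (MapCyl4), (MapCyl5), and (MapCyl6) for $\mathcal{C}$, which are precisely engineered to control the behavior of $M$ on pushouts and quotients, and deduce that the corresponding induced maps for $\widetilde{M}$ are again cofibrations; the exactness of $\widetilde{i}\sqcup\widetilde{j}$ into $F_1 S_n\mathcal{C}$ in (MapCyl1) would be handled the same way. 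I expect this bookkeeping — translating the nested pushout condition defining cofibrations in $S_n\mathcal{C}$ into repeated applications of the cylinder axioms in $\mathcal{C}$ — to be the most delicate part of the argument.
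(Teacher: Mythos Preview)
The paper does not supply its own proof of this proposition: it simply cites \cite[1.6.1]{Waldhausen85}. Your proposal is correct and is precisely the standard argument one finds in Waldhausen --- define the cylinder on $S_n\mathcal{C}$ entrywise, use (MapCyl2) and (MapCyl6) to see that the result lands in $S_n\mathcal{C}$, and then verify saturation and the remaining axioms componentwise, with the genuinely nontrivial step being the cofibration conditions (MapCyl1)/(MapCyl4), which require the refined pushout control of (MapCyl5)/(MapCyl6) exactly as you say.
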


\subsection{2-Segal objects}
The goal of this paper broadly is to see what structure $S_\bullet \mathcal{C}$ and its variants have, given the properties we assume for $\mathcal{C}$. Some structure that $S_\bullet \mathcal{C}$ might have is that of a $2$-Segal category. We review $2$-Segal objects here. The definition of a $2$-Segal object was inspired by the definition of a $1$-Segal object so we start there.

\begin{definition}
A simplicial set $X$ is a \emph{$1$-Segal set} if the map $X_n \rightarrow X_1 \times_{X_0} X_1 \times_{X_0} X_1 \cdots \times_{X_0} X_1$ induced by $\{i,i + 1\} \hookrightarrow [n]$ for all $i \in [n - 1]$ is a bijection for all $n \geq 2$ \cite[1.1]{Boors18}.
\end{definition}

A $1$-Segal set is always isomorphic to the nerve of a category, so we think of a $1$-Segal set as corresponding to a category with objects the $0$-simplices and $1$-simplices the morphisms \cite[2.2.2]{DyckerhoffKapranov12}. Therefore in some sense, one can study categories by studying $1$-Segal sets. If, instead of strict composition, we wanted to model composition up to homotopy we could replace pullbacks with homotopy pullbacks and change the requirement of having a bijection to having a weak equivalence.
One framework where we have homotopy pullbacks and weak equivalences is that of model categories.

\begin{definition}
For a model category $\mathcal{D}$, a simplicial object in $\mathcal{D}$ is \emph{$1$-Segal} if the map $X_n \rightarrow X_1 \times^h_{X_0} X_1 \times^h_{X_0} X_1 \cdots \times^h_{X_0} X_1$ induced by $\{i,i + 1\} \hookrightarrow [n]$ for $i \in [n - 1]$ is a weak equivalence where the right-hand side is an iterated homotopy pullback. 
\end{definition}

\begin{remark}
We could instead take $\mathcal{D}$ to be an $(\infty, 1)$-category or any other category with a notion of homotopy limits and weak equivalences. 
\end{remark}

The maps in the above definitions are called \emph{$1$-Segal maps}. We can describe them in a different way that better suggests the generalization to $2$-Segal maps. In a sufficiently nice model category $\mathcal{D}$ there are derived mapping objects $\text{Map}^h(K,X)$ for all simplicial sets $K$ and simplicial objects $X$ in $\mathcal{D}$ \cite[1.1]{Boors18}. The derived mapping objects are such that if $W\colon A \to \SSets$ is a diagram, then $\text{Map}^h(\colim_A W_a , X)$ is weakly equivalent to $\text{holim}_A \text{Map}^h(W_a, X)$ and $\text{Map}^h(\Delta[n], X)$ is weakly equivalent to $X_n$ for all $n$.    Now fix $n$ and consider the diagram 
\[\begin{tikzcd}[sep = small]
	{\Delta[0]} && {\Delta[1]} && {\Delta[0]} && {\Delta[1]} && {\Delta[0]} && \cdots && {\Delta[0]} \\
	\\
	\\
	\\
	&&&&&& {\Delta[n].}
	\arrow["{0 \to 0}", from=1-1, to=1-3]
	\arrow["{0 \to 1}"', from=1-5, to=1-3]
	\arrow["{0 \to 0}", from=1-5, to=1-7]
	\arrow["{0 \to 1}"', from=1-9, to=1-7]
	\arrow["{0 \to 0}", from=1-9, to=1-11]
	\arrow["{0 \to 1}"', from=1-13, to=1-11]
	\arrow["{0 \to 0}"', from=1-1, to=5-7]
	\arrow[from=1-5, to=5-7]
	\arrow["{0 \to 0, 1 \to 1}"{description}, from=1-3, to=5-7]
	\arrow["{0 \to 1, 1 \to 2}"{description}, from=1-7, to=5-7]
	\arrow["{0 \to 2}", from=1-9, to=5-7]
	\arrow["{0 \to n}", from=1-13, to=5-7]
\end{tikzcd}\]

From the diagram above there is an induced map 
$\Delta[1]\sqcup_{\Delta[0]}\Delta[1]\sqcup_{\Delta[0]} \cdots \sqcup_{\Delta[0]}\Delta[1] \to \Delta[n]$ that we think of as the inclusion of the spine of $\Delta[n]$ into $\Delta[n]$. It turns out that applying $\text{Map}^h(-,X)$ yields a map fitting into the commutative square

\[\begin{tikzcd}[sep = small]
	{\text{Map}^h(\Delta[1]\sqcup_{\Delta[0]}\Delta[1]\sqcup_{\Delta[0]} \cdots \sqcup_{\Delta[0]}\Delta[1],X)} && {\text{Map}^h(\Delta[n], X)} \\
	\\
	{X_1 \times_{X_0}^h X_1 \times_{X_0}^h \cdots \times_{X_0}^h X_1} && {X_n.}
	\arrow[from=1-3, to=1-1]
	\arrow["\simeq"', from=1-1, to=3-1]
	\arrow[from=3-3, to=3-1]
	\arrow["\simeq"', from=3-3, to=1-3]
\end{tikzcd}\] Therefore the $1$-Segal maps are induced by the inclusion of a $1$-dimensional subsimplicial set of $\Delta[n]$ made up of ($n-1$) $1$-simplices glued to each other along $0$-simplices. To obtain $2$-Segal maps we 
can start with subsimplicial sets of $\Delta[n]$ consisting of ($n - 2$) standard $2$-simplices 
glued to each other along $1$-simplices. These $2$-dimensional subsimplicial sets can be unfolded and stretched into a 
triangulation of a regular $n$-gon. We number the vertices of polygons to give a way of specifying triangulations. 

\begin{definition}
We say that a polygon has its vertices \emph{cyclically labeled} if it is possible to read the vertices in counterclockwise order as
$0,1,2 \ldots, n$ for some $n$. We write $P_n$ for the regular $(n + 1)$-gon with cyclically labeled vertices.
\end{definition}

\begin{figure}[h!]
\definecolor{zzttqq}{rgb}{0.6,0.2,0}

\definecolor{zzttqq}{rgb}{0.6,0.2,0}
\begin{tikzpicture}[line cap=round,line join=round,>=triangle 45,x=1cm,y=1cm]

\fill[line width=2pt,color=zzttqq,fill=zzttqq,fill opacity=0.10000000149011612] (-1,-1.51) -- (0.4,-1.51) -- (0.8326237921249262,-0.1785208771867855) -- (-0.3,0.6443784760226768) -- (-1.4326237921249263,-0.17852087718678494) -- cycle;
\fill[line width=2pt,color=zzttqq,fill=zzttqq,fill opacity=0.10000000149011612] (-4.9,-1.51) -- (-3.4,-1.51) -- (-4.15,-0.21096189432334178) -- cycle;
\fill[line width=2pt,color=zzttqq,fill=zzttqq,fill opacity=0.10000000149011612] (3.08,-1.49) -- (4.4,-1.49) -- (4.4,-0.17) -- (3.08,-0.17) -- cycle;
\draw [line width=2pt,color=zzttqq] (-1,-1.51)-- (0.4,-1.51);
\draw [line width=2pt,color=zzttqq] (0.4,-1.51)-- (0.8326237921249262,-0.1785208771867855);
\draw [line width=2pt,color=zzttqq] (0.8326237921249262,-0.1785208771867855)-- (-0.3,0.6443784760226768);
\draw [line width=2pt,color=zzttqq] (-0.3,0.6443784760226768)-- (-1.4326237921249263,-0.17852087718678494);
\draw [line width=2pt,color=zzttqq] (-1.4326237921249263,-0.17852087718678494)-- (-1,-1.51);
\draw [line width=2pt,color=zzttqq] (-4.9,-1.51)-- (-3.4,-1.51);
\draw [line width=2pt,color=zzttqq] (-3.4,-1.51)-- (-4.15,-0.21096189432334178);
\draw [line width=2pt,color=zzttqq] (-4.15,-0.21096189432334178)-- (-4.9,-1.51);
\draw [line width=2pt,color=zzttqq] (3.08,-1.49)-- (4.4,-1.49);
\draw [line width=2pt,color=zzttqq] (4.4,-1.49)-- (4.4,-0.17);
\draw [line width=2pt,color=zzttqq] (4.4,-0.17)-- (3.08,-0.17);
\draw [line width=2pt,color=zzttqq] (3.08,-0.17)-- (3.08,-1.49);
\draw (-5.24,-1.61) node[anchor=north west] {1};
\draw (-3.1,-1.61) node[anchor=north west] {2};
\draw (-4.24,0.49) node[anchor=north west] {3};
\draw (-0.32,1.31) node[anchor=north west] {0};
\draw (-1.8,0.21) node[anchor=north west] {1};
\draw (-1.2,-1.49) node[anchor=north west] {2};
\draw (0.62,-1.43) node[anchor=north west] {3};
\draw (1.04,0.19) node[anchor=north west] {4};
\draw (2.82,0.35) node[anchor=north west] {0};
\draw (4.48,0.35) node[anchor=north west] {1};
\draw (4.66,-1.43) node[anchor=north west] {2};
\draw (2.78,-1.45) node[anchor=north west] {3};
\draw (-0.88,-2.77) node[anchor=north west] {Example};
\draw (2.86,-2.73) node[anchor=north west] {Non-example};
\draw (-5.2,-2.73) node[anchor=north west] {Non-example};
\draw (-0.86,-4.27) node[anchor=north west] {Figure 1};
\end{tikzpicture}

\end{figure}

Figure 1 shows some examples and non-examples of cyclically labeled polygons. The middle polygon in that figure is $P_4$.

\begin{definition}
 A \emph{polygonal subdivision} of a cyclically labeled polygon $P$ is a collection $\mathcal{P}$ of polygons contained in $P$ such that members of $\mathcal{P}$ have their vertices among the vertices of $P$, and such that any two members of $\mathcal{P}$ are either disjoint or intersect on a common edge. A 
\emph{triangulation} is a polygonal subdivision consisting of triangles. A polygonal subdivision inherits labelings on the vertices of its members from the labeling of the polygon it subdivides. A \emph{diagonal} of a polygonal subdivision is an edge of a member of the subdivision that is not an edge of the polygon being subdivided. 
\end{definition}

\definecolor{uuuuuu}{rgb}{0.26666666666666666,0.26666666666666666,0.26666666666666666}
\definecolor{zzttqq}{rgb}{0.6,0.2,0}
\begin{tikzpicture}[line cap=round,line join=round,>=triangle 45,x=1cm,y=1cm]
\fill[line width=2pt,color=zzttqq,fill=zzttqq,fill opacity=0.10000000149011612] (-1,-1.51) -- (0.4,-1.51) -- (0.8326237921249262,-0.1785208771867855) -- (-0.3,0.6443784760226768) -- (-1.4326237921249263,-0.17852087718678494) -- cycle;
\fill[line width=2pt,color=zzttqq,fill=zzttqq,fill opacity=0.10000000149011612] (-4.9,-1.51) -- (-3.4,-1.51) -- (-4.15,-0.21096189432334178) -- cycle;
\fill[line width=2pt,color=zzttqq,fill=zzttqq,fill opacity=0.10000000149011612] (3.08,-1.49) -- (4.4,-1.49) -- (4.4,-0.17) -- (3.08,-0.17) -- cycle;
\draw [line width=2pt,color=zzttqq] (-1,-1.51)-- (0.4,-1.51);
\draw [line width=2pt,color=zzttqq] (0.4,-1.51)-- (0.8326237921249262,-0.1785208771867855);
\draw [line width=2pt,color=zzttqq] (0.8326237921249262,-0.1785208771867855)-- (-0.3,0.6443784760226768);
\draw [line width=2pt,color=zzttqq] (-0.3,0.6443784760226768)-- (-1.4326237921249263,-0.17852087718678494);
\draw [line width=2pt,color=zzttqq] (-1.4326237921249263,-0.17852087718678494)-- (-1,-1.51);
\draw [line width=2pt,color=zzttqq] (-4.9,-1.51)-- (-3.4,-1.51);
\draw [line width=2pt,color=zzttqq] (-3.4,-1.51)-- (-4.15,-0.21096189432334178);
\draw [line width=2pt,color=zzttqq] (-4.15,-0.21096189432334178)-- (-4.9,-1.51);
\draw [line width=2pt,color=zzttqq] (3.08,-1.49)-- (4.4,-1.49);
\draw [line width=2pt,color=zzttqq] (4.4,-1.49)-- (4.4,-0.17);
\draw [line width=2pt,color=zzttqq] (4.4,-0.17)-- (3.08,-0.17);
\draw [line width=2pt,color=zzttqq] (3.08,-0.17)-- (3.08,-1.49);
\draw (-5.24,-1.61) node[anchor=north west] {1};
\draw (-3.1,-1.61) node[anchor=north west] {2};
\draw (-0.32,1.31) node[anchor=north west] {0};
\draw (-1.8,0.21) node[anchor=north west] {1};
\draw (-1.2,-1.49) node[anchor=north west] {2};
\draw (0.62,-1.43) node[anchor=north west] {3};
\draw (1.04,0.19) node[anchor=north west] {4};
\draw (2.82,0.35) node[anchor=north west] {0};
\draw (4.66,-1.43) node[anchor=north west] {2};
\draw (-0.88,-2.77) node[anchor=north west] {Example};
\draw (2.86,-2.73) node[anchor=north west] {Non-example};
\draw (-5.2,-2.73) node[anchor=north west] {Non-example};
\draw [line width=2pt] (-4.9,-1.51)-- (-4.14,-0.95);
\draw [line width=2pt] (-4.14,-0.95)-- (-3.4,-1.51);
\draw [line width=2pt] (3.08,-0.17)-- (4.4,-1.49);
\draw [line width=2pt] (-1.4326237921249263,-0.1785208771867848)-- (0.4,-1.51);
\draw [line width=2pt] (-1.4326237921249263,-0.1785208771867848)-- (0.8326237921249261,-0.17852087718678555);
\draw (-0.84,-4.17) node[anchor=north west] {Figure 2};
\draw (-4.22,0.35) node[anchor=north west] {0};
\draw (2.8,-1.45) node[anchor=north west] {1};
\draw (4.52,0.29) node[anchor=north west] {3};
\draw [line width=2pt] (-4.15,-0.21096189432334173)-- (-4.14,-0.95);
\draw [line width=2pt] (4.4,-0.17)-- (3.08,-1.49);
\begin{scriptsize}
\draw [fill=uuuuuu] (-4.14,-0.95) circle (2pt);
\draw [fill=uuuuuu] (-4.9,-1.51) circle (2pt);
\draw [fill=uuuuuu] (-3.4,-1.51) circle (2pt);
\draw [fill=uuuuuu] (-1.0235114100916989,-1.437639320225002) circle (2pt);
\draw [fill=uuuuuu] (-1.4326237921249263,-0.1785208771867848) circle (2pt);
\draw [fill=uuuuuu] (-0.3,0.6443784760226767) circle (2pt);
\draw [fill=uuuuuu] (0.8326237921249261,-0.17852087718678555) circle (2pt);
\draw [fill=uuuuuu] (0.4,-1.51) circle (2pt);
\draw [fill=uuuuuu] (3.08,-0.17) circle (2pt);
\draw [fill=uuuuuu] (4.4,-1.49) circle (2pt);
\draw [fill=uuuuuu] (-4.15,-0.21096189432334173) circle (2pt);
\draw [fill=uuuuuu] (4.4,-0.17) circle (2pt);
\draw [fill=uuuuuu] (3.08,-1.49) circle (2pt);
\end{scriptsize}
\end{tikzpicture}

Figure 2 above shows some examples and non-examples of polygonal subdivisions of cyclically labeled polygons. In the right picture, the triangle with vertices 0, 2, and 3 intersects the triangle with vertices 0, 1, 3, on more than a common edge. The picture on the left has subdividing polygons with a vertex not among the vertices of the polygon being subdivided. In the center picture, the edge from vertex 1 to vertex 3 is a diagonal since it is not an edge of the five-sided polygon being subdivided.

We now describe how we can define $2$-Segal maps without derived mapping objects using the middle triangulation in Figure 2 as an example. The triangulation corresponds to the category

$$\{1,2,3\}\leftarrow \{1,3\} \to \{1,3,4\} \leftarrow \{1,4\} \to \{0,1,4\}.$$

\noindent Now consider $\mathcal{P}_{\text{Fin}}(\mathbb{N})$, the category of finite ordered subsets of $(\mathbb{N}, \leq)$ and order-preserving functions, $\mathcal{P}_{\text{Fin}}(\mathbb{N})$. The diagram above can be extended to the commutative diagram \[\begin{tikzcd}[sep = small]
	{[2]} && {[1]} && {[2]} && {[1]} && {[2]} \\
	\\
	{\{1,2,3\}} && {\{1,3\}} && {\{1,3,4\}} && {\{1,4\}} && {\{0,1,4\}} \\
	\\
	{[4]} & {=} & {[4]} & {=} & {[4]} & {=} & {[4]} & {=} & {[4]}
	\arrow[hook', from=3-1, to=5-1]
	\arrow[hook', from=3-3, to=5-3]
	\arrow[hook', from=3-5, to=5-5]
	\arrow[hook', from=3-7, to=5-7]
	\arrow[hook', from=3-9, to=5-9]
	\arrow[hook', from=3-3, to=3-1]
	\arrow[hook, from=3-3, to=3-5]
	\arrow[hook', from=3-7, to=3-5]
	\arrow[hook, from=3-7, to=3-9]
	\arrow["\cong"', from=1-1, to=3-1]
	\arrow["\cong"', from=1-3, to=3-3]
	\arrow["\cong", from=1-5, to=3-5]
	\arrow["\cong", from=1-7, to=3-7]
	\arrow["\cong", from=1-9, to=3-9]
	\arrow[from=1-3, to=1-1]
	\arrow[from=1-3, to=1-5]
	\arrow[from=1-7, to=1-5]
	\arrow[from=1-7, to=1-9]
\end{tikzcd}\] in $\mathcal{P}_{\text{Fin}}(\mathbb{N})$ where the top vertical morphisms are unique isomorphisms. Deleting the middle row we get the diagram \[\begin{tikzcd}[sep=small]
	{[2]} && {[1]} && {[2]} && {[1]} && {[2]} \\
	\\
	{[4]} & {=} & {[4]} & {=} & {[4]} & {=} & {[4]} & {=} & {[4]}
	\arrow["{\{1,2,3\}}", from=1-1, to=3-1]
	\arrow["{\{1,3\}}", from=1-3, to=3-3]
	\arrow["{\{1,3,4\}}"., from=1-5, to=3-5]
	\arrow["{\{1,4\}}", from=1-7, to=3-7]
	\arrow["{\{0,1,4\}}", from=1-9, to=3-9]
	\arrow["{\delta^2_1}"', from=1-3, to=1-1]
	\arrow["{\delta^2_2}", from=1-3, to=1-5]
	\arrow["{\delta^2_2}"', from=1-7, to=1-5]
	\arrow["{\delta^2_0}", from=1-7, to=1-9]
\end{tikzcd}\] in $\Delta$ where the vertical morphisms are labeled by their images. Now if $X$ is a simplicial object in $\mathcal{C}$ we have a corresponding diagram \[\begin{tikzcd}[sep = small]
	{X_2} && {X_1} && {X_2} && {X_1} && {X_2} \\
	\\
	{X_4} & {=} & {X_4} & {=} & {X_4} & {=} & {X_4} & {=} & {X_4.}
	\arrow["{\{1,2,3\}}", from=3-1, to=1-1]
	\arrow["{\{1,3\}}", from=3-3, to=1-3]
	\arrow["{\{1,3,4\}}", from=3-5, to=1-5]
	\arrow["{\{1,4\}}", from=3-7, to=1-7]
	\arrow["{\{0,1,4\}}", from=3-9, to=1-9]
	\arrow["{d^2_1}", from=1-1, to=1-3]
	\arrow["{d^2_2}"', from=1-5, to=1-3]
	\arrow["{d^2_1}", from=1-5, to=1-7]
	\arrow["{d^2_0}"', from=1-9, to=1-7]
\end{tikzcd}\] Thus there is an induced map $X_4 \to X_2 \times_{X_1} X_2 \times_{X_1} X_2$, which we write as 
$X_4 \to X_{\{1,2,3\}} \times_{X_{\{1,3\}}} X_{\{1,3,4\}} \times_{X_{\{1,4\}}} X_{\{0,1,4\}}.$ If $\mathcal{D}$ is a sufficiently nice model category, then the induced map fits into the commutative square \[\begin{tikzcd}[sep = small]
	{\text{Map}^h(\Delta[4],X)} && {\text{Map}^h(\Delta[2]\sqcup_{\Delta[1]}\Delta[2]\sqcup_{\Delta[1]}\Delta[2],X)} \\
	\\
	{X_4} && {X_{\{1,2,3\}} \times_{X_{\{1,3\}}}^h X_{\{1,3,4\}}\times_{X_{\{1,4\}}}^h X_{\{0,1,4\}}.}
	\arrow[from=1-1, to=1-3]
	\arrow[from=3-1, to=3-3]
	\arrow["\simeq", from=1-1, to=3-1]
	\arrow["\simeq", from=1-3, to=3-3]
\end{tikzcd}\]  

We have associated a map to a given polygonal subdivision. In general the procedure is as follows. Suppose we are given a simplicial object in $\mathcal{D}$, $X$ and a polygonal subdivision $\mathcal{P}$ of an $n$-gon. First, we extend $X$  to a functor $X \colon \mathcal{P}_\text{Fin}(\mathbb{N}) \to \mathcal{D}$. After identifying objects of $\mathcal{P}_\text{Fin}(\mathbb{N})$ with order-preserving bijections of the form $f_I \colon [k] \to \{i_0,\ldots, i_k\} = I$, $X$ is given by
 $X(f_I) = X_k$ and by sending
 $\alpha\colon I = \{i_0 < i_1 < \ldots < i_k\} \to  J = \{j_0 < j_1 < \ldots < j_m\}$ to $X(f^{-1}_J  \alpha f_I)$.  And now the $2$-Segal map associated to the polygonal subdivision $\mathcal{P}$ and the functor $X$, $f_{\mathcal{P}}$, is the map from $X_n$ to the homotopy limit over $X$ of the poset category of vertex sets appearing in the subdivision. 
\begin{definition}
A simplicial object $X$ in a model category $\mathcal{C}$ is $2$-\emph{Segal} if for every polygonal subdivision $\mathcal{P}$ of $P_n$, 
the map $f_\mathcal{P}$ is a weak equivalence. The collection of maps $f_\mathcal{P}$ for $\mathcal{P}$ a polygonal subdivision of $P_n$ for some $n$ are called the $2$-\emph{Segal maps}. 
\end{definition}


\begin{proposition}
The simplicial topological space $|S_\bullet \mathcal{C}|$ is $2$-Segal for any category with cofibrations $\mathcal{C}$. 
\end{proposition}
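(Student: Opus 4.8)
The plan is to verify the $2$-Segal maps one elementary triangulation at a time, working first at the level of the categories $S_n\mathcal{C}$ and only then passing to classifying spaces $B=|N(-)|$ (so that $X_n = B\,S_n\mathcal{C}$). By a standard reduction it suffices to treat triangulations: any polygonal subdivision is refined by a triangulation, and the associated maps sit in a commuting triangle of maps of homotopy limits, so two-out-of-three reduces the general case to triangulations. Any two triangulations of $P_n$ are connected by diagonal flips, and a flip alters the associated map only by pre/post-composition with the elementary map attached to a square (a copy of $P_3$). Combining this with an induction on $n$ that cuts off one ``ear'' triangle, it is enough to show that for every $n$ the strict comparison functor
\[
 S_n\mathcal{C}\longrightarrow S_{n-1}\mathcal{C}\times_{S_1\mathcal{C}} S_2\mathcal{C}
\]
attached to cutting the triangle $\{0,n-1,n\}$ off $P_n$, together with its mirror image, becomes a weak equivalence onto the homotopy pullback after applying $B$.

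Next I would show this comparison functor is an \emph{equivalence of categories}. An object of the strict pullback is a diagram on $\operatorname{Ar}[n-1]$ together with a cofibration sequence $A_{0,n-1}\rightarrowtail A_{0,n}\twoheadrightarrow A_{n-1,n}$ agreeing with it on the edge $\{0,n-1\}$. The remaining entries $A_{i,n}$ of a genuine object of $S_n\mathcal{C}$ are forced: each is a choice of cokernel, equivalently a pushout of a cofibration, which exists and is unique up to unique isomorphism by the cofibration axioms. Hence the functor is essentially surjective, and since these cokernels are functorial up to unique isomorphism it is fully faithful; therefore $B$ carries it to a homotopy equivalence $B\,S_n\mathcal{C}\xrightarrow{\simeq} B\!\left(S_{n-1}\mathcal{C}\times_{S_1\mathcal{C}} S_2\mathcal{C}\right)$.

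The step I expect to carry the real weight is upgrading the strict pullback of categories to a homotopy pullback of classifying spaces. Because $N$ is a right adjoint it preserves the pullback, so $N(S_{n-1}\mathcal{C}\times_{S_1\mathcal{C}} S_2\mathcal{C})$ is the strict pullback of nerves; the difficulty is that $B$ does not preserve homotopy pullbacks in general. The key observation is that the cofibration axioms make one of the two projection legs a Grothendieck opfibration: the required cocartesian lifts are produced by pushing out cofibrations (which always exist and are again cofibrations, the cokernel of the lift being canonically identified with the original one). A Grothendieck opfibration $p$ is smooth; equivalently $p^{\mathrm{op}}$ is a (proper) Grothendieck fibration and $B\mathcal{E}^{\mathrm{op}}\cong B\mathcal{E}$. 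For such a leg every base-change square becomes homotopy cartesian after $B$, by the properness of Grothendieck fibrations (Quillen's Theorem B in its fibration form). Pulling back along the opfibration leg therefore yields $B(\text{strict pullback})\simeq B(\cdots)\times^h_{B\,S_1\mathcal{C}} B(\cdots)$, and composing with the equivalence of the previous paragraph identifies the $2$-Segal map $f_\mathcal{P}$ with this weak equivalence. This settles the elementary case, hence the proposition.

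The main obstacle is precisely this last step: one must pin down which face functor admits cocartesian lifts built from pushouts of cofibrations, and invoke the correct smooth/proper base-change statement so that the honest categorical limit computes the homotopy limit after realization. The combinatorial reduction to elementary triangulations and the equivalence-of-categories computation are routine by comparison.
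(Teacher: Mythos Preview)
Your proposal has a genuine gap, and it also misses the one-line reason the proposition is true.

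The paper's proof is essentially trivial: each $S_n\mathcal{C}$ has a zero object (the constant diagram at $0$), so its classifying space $|S_n\mathcal{C}|$ is contractible. Every $2$-Segal map is therefore a map between contractible spaces and hence a weak equivalence. No combinatorics of triangulations, no Grothendieck-fibration base change, nothing.

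Your argument, by contrast, attempts to prove something strictly stronger than the proposition --- namely that the comparison functors $S_n\mathcal{C}\to S_{n-1}\mathcal{C}\times_{S_1\mathcal{C}} S_2\mathcal{C}$ (and their ``mirror images'') are equivalences of categories --- and this stronger statement is \emph{false} in general. Your step ``the remaining entries $A_{i,n}$ are forced: each is a choice of cokernel'' is correct only for the ear $\{0,n-1,n\}$, i.e.\ for the \emph{left} $2$-Segal direction, because in a category with cofibrations one can push out cofibrations but cannot in general form the pullbacks needed in the mirror direction. The paper explicitly exhibits (Propositions 6.9 and 8.5) a category with cofibrations $\mathcal{D}$ for which $S_3\mathcal{D}\to S_{\{0,1,3\}}\mathcal{D}\times_{S_{\{1,3\}}\mathcal{D}} S_{\{1,2,3\}}\mathcal{D}$ is not essentially surjective; indeed Proposition 8.6 records that $S_\bullet\mathcal{C}$ as a simplicial \emph{category} need not be $2$-Segal. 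So if your outline worked it would prove a false theorem. The statement you are asked to prove is only about the \emph{realization} $|S_\bullet\mathcal{C}|$, and it holds precisely because realization kills all the information: the zero object contracts every level.
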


\begin{proof}
For all $n$, $S_n\mathcal{C}$ has a zero object given by the diagram with all objects $0$. Thus $|S_n\mathcal{C}|$ is contractible. The homotopy pullback respects weak homotopy equivalence so the $2$-Segal maps of $|S_\bullet \mathcal{C}|$ are between contractible spaces. Such maps are automatically weak homotopy equivalences. 
\end{proof}

\section{Homotopy limits and projective 2-limits} \label{C4}
\thispagestyle{myheadings}

The homotopy limit of a diagram of spaces is generally more difficult to compute than the limit of a diagram of spaces. We replace homotopy limits with something a little easier to work with, what Dyckerhoff and Kapranov call projective $2$-limits  \cite[1.3.6]{DyckerhoffKapranov12}. Finally, we give a technical result on homotopy limits of diagrams of categories. 
\begin{definition}

Let $A$ be a small category and $(\mathcal{C}_a)_{a \in A}$ be a diagram of categories. The \emph{projective 2-limit} 
$2 \lim_{a \in A} \mathcal{C}_a$ is the category where an object is data consisting of:

(0) for all $a \in \text{ob}(A)$, an object $y_a$ of  $\mathcal{C}_a$; and,

(1) for all $u\colon a \rightarrow b$ in $A$, an isomorphism $y_u \colon u_* (y_a) \rightarrow y_b$ in $\mathcal{C}_b$; such that

(2) if $a\xrightarrow{u}b\xrightarrow{b}c$ is a composable pair of morphisms in $A$, then 
$y_{vu} = y_v \circ v_*(y_u)$.

\noindent A morphism in $2 \lim_{a \in A} \mathcal{C}_a$ from $(y_a,y_u)$ to $(y'_a, y'_u)$ is a system of morphisms 
$y_a \rightarrow y'_a$ in $\mathcal{C}_a$ commuting with the $y_u$ and $y'_u$. 

\end{definition}

We note the similarity between the projective $2$-limit of a diagram into $\Cat$ and a certain characterization of the ordinary limit of a diagram into $\Cat$. Let $A$ be a small category and $(\mathcal{C}_a)$ a small diagram of categories. A limit object of the diagram is given by the category where an object consists of:

(0) for all $a \in \ob(A)$, an object $y_a$ of $\mathcal{C}_a$; such that 

(1) for all $u\colon a \to b$ in $A$, $u_*(y_a) = y_b$. 

\noindent A morphism in $\lim_A \mathcal{C}_a$ from $(y_a)$ to $(y_a')$ 
is a system of morphisms $y_a \to y_a'$ such that for all $u \colon a \to b$, $u_*(y_a \to y_a') = y_b \to y_b'$. 

So the difference between the ordinary limit and the projective $2$-limit is that in the projective $2$-limit, instead of having  $u_*(y_a) = y_b$ for all $u \colon a \to b$ we replace equalities with isomorphisms and require these isomorphisms to be coherent.
 
The projective $2$-limit of a diagram of the form $A \rightarrow B \leftarrow C$ is denoted by $A \times^{(2)}_B C$ and called a \emph{2-fiber product}.

\begin{example}
Consider the category $[0]$ with one object and just the identity morphism. Let $F\colon[0] \to \Cat$ be 
a $[0]$-diagram of categories. Write $\mathcal{C}$ for the unique category in the image of the diagram. Then 
$2 \lim_{a \in [0]} \mathcal{C}_a = \mathcal{C}$ because an object of $2 \lim_{a \in [0]} \mathcal{C}_a$ is 
an object $y_a$ of $\mathcal{C}_a = \mathcal{C}$ together with an automorphism $y_u$ of $y_a$ such that $y_u = y_u^2$.
Applying $y_u^{-1}$ to both sides shows that $y_u$ must be the identity.
\end{example}

\begin{example}
We describe the $2$-fiber product $A \times_B^{(2)} C$ of the diagram $A \xrightarrow{F} B \xleftarrow{G} C$. By definition, an object is data consisting of objects $a \in \ob(A)$, $b \in \ob(B)$, and $c \in \ob(C)$, and isomorphisms $y_{id_A} \colon a \cong a$, $y_{id_B} \colon b \cong b$, $y_{id_C}\colon c \cong c$, $y_{F} \colon F(a) \cong b$, and $y_G \colon G(c) \cong b$. We write this object of $A \times_B^{(2)} C$ as the tuple 
$(a,b,c,y_{id_A}, y_{id_B}, y_{id_C}, y_F, y_G)$. Some of this data is actually superfluous. The same argument in the previous example shows that $y_{id_A}$, $y_{id_B}$, and $y_{id_C}$ are all the identity. Thus we may describe an object by only giving the data in the tuple $(a,b,c, y_{F} \colon F(a) \cong b, y_G \colon G(c) \cong b)$. A morphism from
$(a,b,c, y_{F} \colon F(a) \cong b, y_G \colon G(c) \cong b)$ to $(a',b',c', y'_{F} \colon F(a') \cong b', y'_G \colon G(c') \cong b')$ consists of morphisms $f_A\colon a \to a'$, 
$f_B \colon b \to b'$, and $f_C \colon c \to c'$ such that the diagram

\[\begin{tikzcd}[sep = small]
	{F(a)} && b && {G(c)} \\
	\\
	{F(a')} && {b'} && {G(c')}
	\arrow["{F(f_A)}"', from=1-1, to=3-1]
	\arrow["{G(f_C)}", from=1-5, to=3-5]
	\arrow[from=1-3, to=3-3]
	\arrow["\cong", from=1-1, to=1-3]
	\arrow["\cong", from=1-5, to=1-3]
	\arrow["\cong", from=3-1, to=3-3]
	\arrow["\cong", from=3-5, to=3-3]
\end{tikzcd}\] commutes. 
\end{example}

For the homotopy limits we want to replace, the model structure for $\Cat$ that we have in mind is the so-called "canonical model structure".  We begin by defining the fibrations for this model category
\begin{definition}\cite[\S 2]{Rezk00}
A functor $F \colon \mathcal{C} \to \mathcal{D}$ is an \emph{isofibration} if whenever $c$ is an object of $\mathcal{C}$ and 
$f \colon F(c) \to d$ is an isomorphism, there is a $g$ such that $F(g) = f$.
\end{definition}

\begin{proposition}\cite[3.1]{Rezk00}
There is a model category structure on $\Cat$, called the canonical model category structure, such that the fibrations are the isofibrations, the cofibrations are the functors injective on objects, and the weak equivalences are equivalences.
\end{proposition}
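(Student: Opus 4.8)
The plan is to verify Quillen's model category axioms directly, exploiting the explicit descriptions of the three classes. Since limits and colimits of small categories exist (limits are computed levelwise on objects and morphisms, and colimits exist because $\Cat$ is cocomplete), the limit--colimit axiom holds. The weak equivalences here are the equivalences of categories, and these satisfy two-out-of-three and are closed under retracts; both facts follow from the characterization of an equivalence as a functor admitting a quasi-inverse, or equivalently as one that is fully faithful and essentially surjective. Likewise, isofibrations and functors injective on objects are each visibly closed under retracts, so the retract and two-out-of-three axioms hold.

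The heart of the argument is a clean description of the \emph{trivial fibrations}, that is, the functors that are simultaneously isofibrations and equivalences. I claim these are exactly the functors that are surjective on objects, full, and faithful: essential surjectivity together with the isofibration property upgrades to honest surjectivity on objects, and full faithfulness is the remaining content of being an equivalence. With this in hand I would produce the two required factorizations. For the (cofibration, trivial fibration) factorization of $F \colon \mathcal{C} \to \mathcal{D}$, I build an intermediate category $\mathcal{E}$ with object set $\ob\mathcal{C} \sqcup \ob\mathcal{D}$ and hom-sets $\mathcal{E}(x,y) = \mathcal{D}(\rho x, \rho y)$, where $\rho$ sends a first-summand object $c$ to $F(c)$ and a second-summand object $d$ to $d$; then $\mathcal{C} \to \mathcal{E}$ is injective on objects and $\mathcal{E} \to \mathcal{D}$ is surjective on objects, full, and faithful, with composite $F$. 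For the (trivial cofibration, fibration) factorization I use the iso-comma (mapping path) construction: let $P_F$ have objects $(c, d, \phi \colon F(c) \xrightarrow{\cong} d)$; then $\mathcal{C} \to P_F$, $c \mapsto (c, F(c), \id)$, is an injective-on-objects equivalence, and the projection $P_F \to \mathcal{D}$, $(c,d,\phi) \mapsto d$, is an isofibration.

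Finally I would verify the lifting axiom. That cofibrations lift against trivial fibrations is obtained by choosing object-preimages along the surjection for the objects outside the image of the injective-on-objects map, and then using fullness to define the lift on morphisms and faithfulness to see it is well defined and functorial. That trivial cofibrations lift against fibrations is the subtler half: given an injective-on-objects equivalence $i$ and an isofibration $p$, one transports a candidate solution along a quasi-inverse of $i$ together with the natural isomorphisms witnessing the equivalence, and then uses the isofibration property to rectify the resulting comparison isomorphisms into an on-the-nose lift.

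I expect this last step to be the main obstacle, since it requires juggling the coherence of the natural isomorphisms with the isofibration lifts of isomorphisms while keeping the resulting assignment a strict functor; the bookkeeping, rather than any single idea, is where the difficulty lies. Equivalently, one can package the whole verification through the recognition theorem for cofibrantly generated model categories, taking as generating cofibrations the maps $\emptyset \to [0]$, the boundary inclusion $[0] \sqcup [0] \to [1]$, and the functor collapsing two parallel arrows to a single one, together with the single generating trivial cofibration $[0] \to \mathbb{J}$ into the free-living isomorphism $\mathbb{J}$; this reduces the work to identifying the associated lifting classes with the three classes above, the injectivity-on-objects and isofibration conditions being exactly the right-lifting properties against these generators.
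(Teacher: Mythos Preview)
The paper does not actually prove this proposition: it is stated as a citation of Rezk's unpublished notes \cite[3.1]{Rezk00} and no argument is supplied. So there is no ``paper's own proof'' to compare against. Your outline is essentially the standard verification (and indeed is the approach Rezk takes): identify the trivial fibrations as the fully faithful functors surjective on objects, build the two factorizations via the object-disjoint-union construction and the iso-comma construction respectively, and then check the lifting properties. The alternative you mention at the end, via the generating (trivial) cofibrations $\emptyset \to [0]$, $\partial[1] \to [1]$, the parallel-pair collapse, and $[0] \to \mathbb{J}$, is also standard and cleanly packages the lifting verifications.

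One small caution: your sketch of the (trivial cofibration)/(fibration) lifting is a bit vague, and in practice this step is most cleanly handled not by transporting along a quasi-inverse but by first choosing, for each object $b$ not in the image of the injective-on-objects equivalence $i \colon A \to B$, an isomorphism $i(a_b) \cong b$ (using essential surjectivity), pushing this isomorphism down and lifting it through the isofibration $p$, and then defining the lift on morphisms via full faithfulness of $i$. Working object-by-object in this way avoids the coherence bookkeeping you anticipate.
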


\begin{proposition}\cite[1.3.8]{DyckerhoffKapranov12}
Let $|-|\colon \Cat \to \Top$ denote geometric realization. For any diagram of categories $A \to \Cat$, $a \mapsto \mathcal{C}_a$, we have a natural morphism of spaces 
$f \colon |2 \lim_{a \in \text{ob}(A)} \mathcal{C}_a| \to \text{holim}_{a \in \text{ob}(A)}|\mathcal{C}_a|$. 
If $A \to \Cat$ has image in the category of groupoids, then $2\lim_{a \in \text{ob}(A)} \mathcal{C}_a$ is a groupoid and
$f$ is a weak equivalence. 
\end{proposition}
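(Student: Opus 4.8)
The plan is to construct the comparison map $f$ from the tautological pseudo-cone carried by the projective $2$-limit, and then, under the groupoid hypothesis, to recognize $f$ as the image under geometric realization of a comparison map built from a right Quillen functor, whence it is automatically a weak equivalence.

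To produce $f$ in general, I would use the projection functors $p_a \colon 2\lim_{a \in \ob(A)} \mathcal{C}_a \to \mathcal{C}_a$, $(y_a, y_u) \mapsto y_a$. For each $u \colon a \to b$ in $A$ the coherence data $y_u$ assemble into a natural isomorphism $u_* \circ p_a \cong p_b$ of functors $2\lim_a \mathcal{C}_a \to \mathcal{C}_b$, and the cocycle condition (2) of the definition says precisely that these natural isomorphisms compose on the nose. Passing to nerves and then to realizations turns a natural isomorphism of functors into a homotopy between the realized maps, so the family $(|p_a|)$ together with these homotopies constitutes a homotopy-coherent cone on the diagram $a \mapsto |\mathcal{C}_a|$ with apex $|2\lim_a \mathcal{C}_a|$. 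The universal property of the homotopy limit then yields the natural map $f$; its naturality in the diagram $a \mapsto \mathcal{C}_a$ is inherited from that of the projections and of the nerve.

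For the groupoid case I would first check the easy algebraic point that $2\lim_a \mathcal{C}_a$ is a groupoid: a morphism $(f_a) \colon (y_a, y_u) \to (y'_a, y'_u)$ is a compatible system of morphisms $f_a \colon y_a \to y'_a$, and if each $\mathcal{C}_a$ is a groupoid then each $f_a$ is invertible and the inverses $(f_a^{-1})$ again satisfy the compatibility squares, providing a two-sided inverse. To see that $f$ is then a weak equivalence, the key input is that, in the canonical model structure on $\Cat$, the projective $2$-limit of a diagram of groupoids computes the homotopy limit: every groupoid is both fibrant and cofibrant, and the $2$-limit is the relevant flexible (pseudo-)limit, which models the homotopy limit in this structure. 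Next, the nerve $N$ from groupoids to simplicial sets is right adjoint to the fundamental-groupoid functor and carries isofibrations of groupoids to Kan fibrations and equivalences to weak homotopy equivalences, hence is right Quillen into the Kan--Quillen structure and preserves homotopy limits; therefore $N(2\lim_a \mathcal{C}_a) \simeq \operatorname{holim}_a N\mathcal{C}_a$. Finally, geometric realization is the left adjoint of a Quillen equivalence with $\Top$, so it induces an equivalence of homotopy categories and thus preserves homotopy limits up to weak equivalence; applying it gives $|2\lim_a \mathcal{C}_a| \simeq \operatorname{holim}_a |\mathcal{C}_a|$.

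The main obstacle will be the bookkeeping needed to identify this abstractly obtained equivalence with the concrete map $f$ from the second paragraph, that is, to check that the right-Quillen and realization comparisons assemble into the same map as the pseudo-cone. The supporting homotopical facts — that the pseudo-limit models the homotopy limit in the canonical structure, and that $N$ sends isofibrations of groupoids to Kan fibrations — are exactly where the groupoid hypothesis is used, since it is what makes each $N\mathcal{C}_a$ a Kan complex and places the whole comparison in the setting where weak homotopy equivalence is detected correctly. For a general (non-groupoid) diagram this fails, which is why the statement is restricted to groupoids.
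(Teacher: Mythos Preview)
The paper does not supply its own proof of this proposition: it is stated with a citation to \cite[1.3.8]{DyckerhoffKapranov12} and then used as a black box, so there is no argument in the paper to compare yours against.

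Your outline is a reasonable sketch of the standard argument. One point deserves care: you justify the last step by saying that geometric realization, as the left adjoint in a Quillen equivalence, ``preserves homotopy limits up to weak equivalence.'' That conclusion is correct at the level of derived functors (any equivalence of homotopy theories preserves homotopy limits), but it is \emph{not} the automatic preservation one gets from being left Quillen, and it is exactly here that the identification with your concrete map $f$ requires work rather than just bookkeeping. A cleaner route is to stay in simplicial sets: since each $\mathcal{C}_a$ is a groupoid, each $N\mathcal{C}_a$ is a Kan complex, the Bousfield--Kan style model for $\operatorname{holim}_a N\mathcal{C}_a$ is already a Kan complex, and the comparison $N(2\lim_a \mathcal{C}_a)\to \operatorname{holim}_a N\mathcal{C}_a$ is a weak equivalence between Kan complexes; then one only needs that $|{-}|$ sends weak equivalences of simplicial sets to weak equivalences, which is immediate. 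This also makes transparent why the groupoid hypothesis is essential, as you note.
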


Let $C \rightarrow D \leftarrow E$ be a diagram of groupoids. In light of the previous proposition, if we had a weak equivalence between $|C \times^2_D E|$ and $|C \times^h_D E|$ then we would have a weak equivalence between $|C| \times_{|D|}^h |E|$ and $|C \times^h_D E|$. There are some strong conditions under which there is a weak equivalence between $|C \times^2_D E|$ and $|C \times^h_D E|$. We do not use these conditions in what follows but believe they are of independent interest, so we give them here. To arrive at these conditions we need to pick a particular model for the homotopy pullback of categories, which we do now. 

 There is a functorial factorization of each morphism in 
 $\Cat$ into a trivial cofibration followed by a fibration. In this factorization, the functor $F \colon C \to D$ factors through $L(C)$, a category whose objects are tuples $(c,d,\phi)$ where $c \in \text{ob}(C)$, $d \in \text{ob}(D)$, and $\phi \colon F(c) \xrightarrow{\cong} d$ is an isomorphism in $D$. In the canonical model structure, every category is fibrant, so by \cite[13.1.3]{Hirschhorn03}, $\Cat$ is right proper. Thus the homotopy pullback of $C \rightarrow D \leftarrow E$ may be defined by first factoring as $C \to L(C) \to D \leftarrow L(E) \leftarrow E$ and then taking the pullback of $L(C) \to D \leftarrow L(E)$ \cite[13.3.2]{Hirschhorn03}. We conclude that a particular model for the homotopy pullback of $C \xrightarrow{F} D \xleftarrow{K} E$ is presented by a category with objects $(c, d, e, \phi \colon F(c) \xrightarrow{\cong} d, \psi \colon F(c) = K(e) \xrightarrow{\cong} d)$, and a morphism from $(c, d, e, \phi \colon F(c) \xrightarrow{\cong} d, \psi \colon F(c) = K(e) \xrightarrow{\cong} d)$ to $(c'd',e',\phi'\colon F(c') \to d', \psi'\colon K(e') \to d')$ is given by a pair $c \to c'$ in $C$ and $e \to e'$ in $E$ such that $F(c \to c') = K(e \to e')$. 

\begin{proposition}
If $C \xrightarrow{F} D \xleftarrow{K} E$ is a a diagram of groupoids with either $F$ or $K$ full and surjective on objects, then $|C \times^h_D E|$ is weakly homotopy equivalent to 
$|C|\times^h_{|D|} |E|$. 
\end{proposition}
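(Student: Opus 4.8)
The plan is to route the comparison through the projective $2$-limit, using the above proposition of Dyckerhoff and Kapranov together with the observation that the hypothesis forces one of the two functors to be a fibration in the canonical model structure. Assume without loss of generality that $F$ is full and surjective on objects; the argument for $K$ is symmetric. First I would check that such an $F$ is an isofibration. Given $c \in \ob(C)$ and an isomorphism $f \colon F(c) \xrightarrow{\cong} d$ in $D$, surjectivity on objects produces $c'$ with $F(c') = d$, and fullness produces $g \colon c \to c'$ with $F(g) = f$; since $C$ is a groupoid, $g$ is automatically an isomorphism, so $f$ lifts. Hence $F$ is a fibration. Because $\Cat$ is right proper and every object is fibrant, once $F$ is a fibration there is no need to replace it, and the strict pullback $C \times_D E$ already presents the homotopy pullback $C \times^h_D E$.

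It therefore suffices to compare the strict pullback $C \times_D E$ with the $2$-fiber product $C \times^{(2)}_D E$. I would write down the canonical comparison functor $\Phi \colon C \times_D E \to C \times^{(2)}_D E$ sending an object $(c,e)$ with $F(c) = K(e)$ to the object $(c, F(c), e, \id, \id)$, and a morphism $(f_C, f_E)$ with $F(f_C) = K(f_E)$ to the morphism with these two components (its $D$-component being the common value $F(f_C) = K(f_E)$). Full faithfulness of $\Phi$ is immediate from the description of morphisms in Example~5.4: a morphism between two objects of $C \times^{(2)}_D E$ whose coherence isomorphisms are identities forces the $D$-component to equal both $F(f_C)$ and $K(f_E)$, so such morphisms are precisely the pairs $(f_C, f_E)$ with $F(f_C) = K(f_E)$, exactly the morphisms of $C \times_D E$.

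The main obstacle is essential surjectivity of $\Phi$, and this is the only place the hypothesis is used. Given an arbitrary object $(c, d, e, y_F \colon F(c) \xrightarrow{\cong} d, y_G \colon K(e) \xrightarrow{\cong} d)$ of $C \times^{(2)}_D E$, I would straighten its coherence data as follows. The isomorphism $y_G^{-1} y_F \colon F(c) \xrightarrow{\cong} K(e)$ in $D$ can be lifted, using that $F$ is an isofibration, to an isomorphism $h \colon c \xrightarrow{\cong} c'$ in $C$ with $F(c') = K(e)$ and $F(h) = y_G^{-1} y_F$. Then $(c', e)$ is a genuine object of $C \times_D E$, and the triple $(h, y_G^{-1}, \id_e)$ is an isomorphism in $C \times^{(2)}_D E$ from the given object to $\Phi(c', e)$. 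Thus $\Phi$ is essentially surjective, hence an equivalence of categories. (Without the hypothesis there can be objects whose coherence isomorphism $F(c) \cong K(e)$ is realized by no isomorphism of $C$, and the straightening fails; this is why some condition is genuinely needed for this route.)

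Finally I would assemble the pieces. Equivalences of categories are weak equivalences in the canonical model structure and geometric realization preserves them, so $|C \times^h_D E| \simeq |C \times_D E| \simeq |C \times^{(2)}_D E|$. On the other hand $C \to D \leftarrow E$ is a diagram of groupoids, so the cited proposition of Dyckerhoff and Kapranov gives a weak equivalence $|C \times^{(2)}_D E| \xrightarrow{\simeq} \operatorname{holim}_a |\mathcal{C}_a|$; since the homotopy limit of a cospan of spaces is its homotopy pullback, the target is $|C| \times^h_{|D|} |E|$. Composing these weak equivalences yields $|C \times^h_D E| \simeq |C| \times^h_{|D|} |E|$, as claimed.
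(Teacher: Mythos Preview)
Your proof is correct and takes a genuinely different route from the paper's. The paper works directly with its explicit model for $C \times^h_D E$ and constructs a comparison functor $\mathcal{H}\colon C \times^h_D E \to C \times^{(2)}_D E$ that forgets the datum $\psi$; it then invokes Quillen's Theorem~A, producing for each object of the target an initial object of the relevant comma category (this is where fullness and surjectivity on objects are used). By contrast, you first observe that ``full plus surjective on objects plus source a groupoid'' forces $F$ to be an isofibration, whence right properness lets you replace $C \times^h_D E$ by the strict pullback $C \times_D E$; you then show by a direct essential-surjectivity argument that $C \times_D E \to C \times^{(2)}_D E$ is an equivalence of categories. Your approach is more elementary in that it avoids Theorem~A entirely and stays at the level of equivalences of categories; the paper's approach has the mild advantage of comparing the two models in a single step without passing through the strict pullback. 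Both arguments use the hypothesis at essentially the same moment, namely to straighten the coherence isomorphism $F(c)\cong K(e)$ into an actual lift in $C$.
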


\begin{proof}
Without loss of generality assume $F$ is full and surjective on objects. We define a functor $\mathcal{H} \colon C \times^h_D E \to C\times^2_D E$ by sending
$(c,d,e,\phi \colon F(c) \to d, \psi \colon K(e) \to d)$ to $(c,d,e,\phi,\phi)$. Notice that $\mathcal{H}$ forgets about $\psi$. For $L\colon A \to B$ we write $L/b$ for the slice category whose objects are morphisms $L(a) \to b$. We claim that for any $(c',d', e', \phi', \psi')$ in $C\times^2_D E$, the category $\mathcal{H}/(c',d',e',\phi',\psi')$ has an initial object. Then Quillen's Theorem A \cite[1]{Quillen73} applies to the discussion after Proposition 5.6 to give the result. Let $a$ be some  object that $F$ sends to $K(e')$, $h$ be some morphism $F$ sends to $\phi'^{-1}\psi'$. We claim that for $\mathcal{H}/(c',d',e',\phi',\psi')$, an initial object is $(a, d', e', \psi', \psi')$ with the morphism part of the object given by $h, id_{d'}, id_{e'}$. To verify this data gives an initial object we take any object of $\mathcal{H}/(c',d',e',\phi',\psi')$, meaning a tuple $(c'',d'',e'',\phi'', \psi'')$ with $F(c'') = K(e'')$, $g_C \colon c'' \to c'$, $g_D \colon d'' \to d'$, and $g_E \colon e'' \to e'$ such that 

\[\begin{tikzcd}[sep = small]
	{F(c'')} && d'' && {K(e'')} \\
	\\
	{F(c')} && {d'} && {K(e')}
	\arrow["{F(g_C)}", from=1-1, to=3-1]
	\arrow["g_D", from=1-3, to=3-3]
	\arrow["{K(g_E)}", from=1-5, to=3-5]
	\arrow["{\phi''}", from=1-1, to=1-3]
	\arrow["{\phi''}", from=1-5, to=1-3]
	\arrow["{\phi'}", from=3-1, to=3-3]
	\arrow["{\psi'}", from=3-5, to=3-3]
\end{tikzcd}\] commutes.

 The unique morphism needed is given by the pair $(g_C^{-1}h,g_E^{-1})$.

\end{proof}

\section{2-Segality of discrete variants of the $S_\bullet$-construction} \label{C5}
\thispagestyle{myheadings}

Let $\mathcal{C}$ be a category with cofibrations. We determine which $2$-Segal maps for $\text{iso}(s_\bullet \mathcal{C})$ are necessarily bijections. These $2$-Segal maps turn out to be those from triangulations of $P_n$ for which each triangle contains the vertex $0$.

First, we remark that the $2$-Segal maps for $s_\bullet\mathcal{C}$ are almost never bijective. An example illustrates the general problem. Consider the $2$-Segal map $s_3\mathcal{C} \to s_{\{0,1,2\}}\mathcal{C} \times_{s_{\{0,2\}}\mathcal{C}} s_{\{0,2,3\}}\mathcal{C}$. If this map were injective then any diagram of the form \[\begin{tikzcd}[sep = small]
	&&&& {A_{2,3}} \\
	\\
	&& {A_{1,2}} && \bullet \\
	\\
	{A_{0,1}} && {A_{0,2}} && {A_{0,3}}
	\arrow[tail, from=5-1, to=5-3]
	\arrow[two heads, from=5-3, to=3-3]
	\arrow[tail, from=5-3, to=5-5]
	\arrow[curve={height=18pt}, two heads, from=5-5, to=1-5]
\end{tikzcd}\] could be uniquely completed to an object of $s_3\mathcal{C}$. Any completion of the above diagram amounts to a choice of cokernel of $A_{01} \to A_{03}$ but there is no reason a priori for there to be a unique cokernel for this morphism in $\mathcal{C}$. The general failure of any $2$-Segal map for $s_\bullet\mathcal{C}$ to be bijective is why we look at $\text{iso}(s_\bullet \mathcal{C})$ instead.  

\begin{lemma}
Suppose that for every $n \geq 3$ and $0 \leq j \leq n$, the map
$X_n \to X_{\{0, j, j+1, \ldots, n\}} \times_{X_{\{0,j\}}} X_{\{0, 1, \ldots, j\}}$ is a bijection. Then the $2$-Segal map, coming from the triangulation $\mathcal{T}$ where each triangle contains the vertex labeled $0$ under some cyclic labeling, is bijective.
\end{lemma}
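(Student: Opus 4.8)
The plan is to induct on $n$ and peel off one triangle of the fan at a time, using the hypothesis as the single-cut step. First I record the target of $f_\mathcal{T}$ explicitly. Since $\mathcal{T}$ is the fan triangulation of $P_n$ from the vertex $0$, its triangles are $\{0,k,k+1\}$ for $1 \le k \le n-1$ and its diagonals are the edges $\{0,k\}$ for $2 \le k \le n-1$. Because $X$ is discrete, the homotopy limit defining $f_\mathcal{T}$ is an ordinary limit over the poset of faces of $\mathcal{T}$, and this limit is the iterated fiber product
\[
X_{\{0,1,2\}} \times_{X_{\{0,2\}}} X_{\{0,2,3\}} \times_{X_{\{0,3\}}} \cdots \times_{X_{\{0,n-1\}}} X_{\{0,n-1,n\}},
\]
where each leg map is the restriction of $X$ to the relevant edge. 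So $f_\mathcal{T}$ is the canonical map from $X_n$ into this iterated fiber product induced by the restrictions $X_n \to X_{\{0,k,k+1\}}$.

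For the base case $n=3$ the fan triangulation has the single diagonal $\{0,2\}$, and $f_\mathcal{T}$ is exactly the map $X_3 \to X_{\{0,1,2\}} \times_{X_{\{0,2\}}} X_{\{0,2,3\}}$, which is the hypothesis with $j=2=n-1$ (up to the harmless symmetry of the fiber product). For the inductive step with $n \ge 4$, I apply the hypothesis with $j = n-1$ to obtain a bijection
\[
X_n \xrightarrow{\ \cong\ } X_{\{0,n-1,n\}} \times_{X_{\{0,n-1\}}} X_{\{0,1,\ldots,n-1\}}.
\]
The right-hand factor $X_{\{0,1,\ldots,n-1\}}$ is $X$ evaluated at a vertex set of size $n$, so via the order-preserving identification $[n-1] \cong \{0,1,\ldots,n-1\}$ the induction hypothesis applies to it: the fan-triangulation map for $P_{n-1}$ gives a bijection
\[
X_{\{0,1,\ldots,n-1\}} \xrightarrow{\ \cong\ } X_{\{0,1,2\}} \times_{X_{\{0,2\}}} \cdots \times_{X_{\{0,n-2\}}} X_{\{0,n-2,n-1\}}.
\]
Substituting the second bijection into the first produces a bijection from $X_n$ to the iterated fiber product written in the first paragraph, after reassociating the fiber products, which is allowed since an iterated fiber product computes a limit and limits are associative.

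The step that needs care is identifying this composite bijection with $f_\mathcal{T}$ itself, rather than with some other bijection between the same two sets. Two points must be checked. First, the outer gluing map $X_{\{0,1,\ldots,n-1\}} \to X_{\{0,n-1\}}$ appearing in the hypothesis must agree, after transport across the inner bijection, with the edge restriction $X_{\{0,n-2,n-1\}} \to X_{\{0,n-1\}}$ on the rightmost factor; this holds because $\{0,n-1\} \subset \{0,n-2,n-1\}$ and restriction maps compose, a consequence of $X$ being a functor on $\mathcal{P}_{\text{Fin}}(\mathbb{N})$. Second, the composite and $f_\mathcal{T}$ are both maps into the same limit, so by the universal property it suffices to check they have the same legs $X_n \to X_{\{0,k,k+1\}}$; each leg of the composite is a composite of restriction maps of $X$, which is again the single restriction to $\{0,k,k+1\}$ by functoriality.

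Hence the composite is $f_\mathcal{T}$, and being a composite of bijections it is a bijection. The main obstacle is therefore purely bookkeeping: verifying these compatibilities of restriction maps so that the inductive reassociation genuinely reconstructs $f_\mathcal{T}$ and not merely an abstract bijection between its source and target.
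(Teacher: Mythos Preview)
Your proof is correct and follows the same approach as the paper: apply the hypothesis with $j=n-1$ to split off the last triangle, then iterate (the paper phrases this as ``use the hypothesis repeatedly'' rather than as a formal induction, but the content is identical). Your extra paragraph verifying that the resulting composite coincides with $f_\mathcal{T}$, rather than merely with some bijection between the same sets, is more careful than the paper's treatment and is a welcome addition.
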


\begin{proof}\label{Get0Triangle} 
We use the hypothesis repeatedly. The bijections 
\[\begin{tikzcd}[sep = small]
	{X_n} & {\xrightarrow{\cong}} & {X_{\{0, n-1, n\}} \times_{X_{\{0,n-1\}}}X_{\{0,1,\ldots, n-1\}}} \\
	{X_{\{0,1,\ldots, n-1  \}}} & {\xrightarrow{\cong}} & {X_{\{0,n-2,n-1 \}} \times_{X_{\{0,n-2  \}}} X_{\{0,1,\ldots,n-2  \}}} \\
	& \vdots \\
	{X_{\{0,1,2,3  \}}} & {\xrightarrow{\cong}} & {X_{\{0,2,3  \}} \times_{X_{\{0,2\}    }}X_{\{0,1,2  \}}}
\end{tikzcd}\]

\noindent combine to give that
$f_\mathcal{T} \colon X_n \to X_{\{0, n-1, n\}} \times_{X_{\{0,n-1\}}} X_{\{0, n - 2, n - 1\}} \times_{X_{\{0,n-2\}}} \cdots \times_{X_{\{0,2\}}} X_{\{0,1,2\}}$ is a bijection.
\end{proof}

Similarly, if  the map $X_n \to 
X_{\{0,1,\ldots,j,n\}} \times_{X_{\{j,n\}}} X_{\{j,j+1,\ldots,n\}}$ is a bijection for every $n \geq 3$ and $0 \leq j \leq n$, then the $2$-Segal map coming from the triangulation $\mathcal{T}$ where each triangle contains the vertex labeled $n$ under some cyclic labeling is bijective.

The previous lemma motivates us to define a weakened version of $2$-Segal sets.

\begin{definition}
Suppose $X$ is a simplicial object in $\mathcal{C}$.

\begin{itemize}
    \item We say $X$ is 
\emph{left $2$-Segal} if for every $n \geq 3$ and $0 \leq j \leq n$, the map
$X_n \to X_{\{0, j, j+1, \ldots, n\}} \times_{X_{\{0,j\}}}^h X_{\{0, 1, \ldots, j\}}$ is a weak equivalence. 

\item We say $X$ is \emph{right $2$-Segal} if for every $n \geq 3$ and $0 \leq j \leq n$, the map $X_n \to 
X_{\{0,1,\ldots,j,n\}} \times_{X_{\{j,n\}}}^h X_{\{j,j+1,\ldots,n\}}$ is a weak equivalence.

\end{itemize}
\end{definition}

The last fact we use to prove that certain $f_\mathcal{T}$ are always bijective is the following. 

\begin{lemma}
Let $M_n$ be the set of isomorphism classes of chains of $(n - 1)$ cofibrations. The function $\mu_n \colon \text{iso}(s_n\mathcal{C}) \longrightarrow M_n$ that sends $[A]$ to
$[A_{0,1} \rightarrowtail A_{0,2} \rightarrowtail \cdots \rightarrowtail A_{0,n}]$ is a bijection. 
\end{lemma}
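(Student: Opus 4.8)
The plan is to realize $\mu_n$ as the map on isomorphism classes induced by the forgetful functor $q \colon S_n\mathcal{C} \to \mathcal{K}$, where $\mathcal{K}$ denotes the category whose objects are chains of $(n-1)$ composable cofibrations $B_1 \rightarrowtail B_2 \rightarrowtail \cdots \rightarrowtail B_n$ and whose morphisms are levelwise maps commuting with the cofibrations, and where $q$ sends an object $A$ to its bottom row $A_{0,1}\rightarrowtail\cdots\rightarrowtail A_{0,n}$. Since $M_n = \operatorname{iso}(\mathcal{K})$ by definition, it suffices to show that $q$ is an equivalence of categories: an equivalence induces a bijection on isomorphism classes, and that bijection is exactly $\mu_n$. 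I would therefore verify that $q$ is essentially surjective and fully faithful.

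For essential surjectivity, given a chain $B_1 \rightarrowtail \cdots \rightarrowtail B_n$ I would set $B_0 = 0$ and build an object $A$ of $S_n\mathcal{C}$ over it by putting $A_{j,j}=0$ and choosing, for each $0 \le i < k$, a cokernel $A_{i,k}$ of the composite cofibration $B_i \rightarrowtail B_k$ (so that $A_{0,k}=B_k$). The structure maps come from functoriality of cokernels: a horizontal map $A_{i,k}\rightarrowtail A_{i,k'}$ is induced by $B_k \rightarrowtail B_{k'}$ and a vertical map $A_{i,k}\twoheadrightarrow A_{j,k}$ by $B_i \rightarrowtail B_j$. The real content is then that for each $i<j<k$ the sequence $B_j/B_i \rightarrowtail B_k/B_i \twoheadrightarrow B_k/B_j$ is a cofibration sequence; this is the standard fact that the successive quotients of a chain of cofibrations assemble into a cofibration sequence, which rests on the pushout-of-cofibration axioms and is precisely what Waldhausen establishes in showing that $S_n\mathcal{C}$ is a category with cofibrations \cite[1.3]{Waldhausen85}. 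I expect this to be the main obstacle, since it is the only step that genuinely uses the axioms rather than formal manipulations of cokernels.

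For full faithfulness, let $A$ and $A'$ be objects of $S_n\mathcal{C}$. Because each $A_{j,k}$ with $j>0$ is a cokernel of $A_{0,j}\rightarrowtail A_{0,k}$, the map $A_{0,k}\twoheadrightarrow A_{j,k}$ is an epimorphism, so any natural transformation $f \colon A \to A'$ has its component $f_{j,k}$ determined by its bottom-row component $f_{0,k}$; this gives faithfulness. For fullness, given a morphism of chains $\{f_{0,k}\}\colon qA \to qA'$, I would define $f_{j,k}$ by the universal property of the cokernel $A_{j,k}$: the composite $A_{0,k}\xrightarrow{f_{0,k}}A'_{0,k}\twoheadrightarrow A'_{j,k}$ has zero composite with $A_{0,j}\rightarrowtail A_{0,k}$, because $f_{0,k}$ restricts to $f_{0,j}$ on $A_{0,j}$ and $A'_{0,j}\rightarrowtail A'_{0,k}\twoheadrightarrow A'_{j,k}$ is a cofibration sequence. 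One then checks, again using that the cokernel maps are epimorphisms, that the resulting $f_{j,k}$ commute with all horizontal and vertical structure maps, so they assemble into a natural transformation lifting $\{f_{0,k}\}$.

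Finally, I would conclude that $\mu_n$ is a bijection: surjectivity is exactly essential surjectivity of $q$, and injectivity follows because a lift of an isomorphism $qA \cong qA'$ produced by fullness is levelwise an isomorphism, each $f_{j,k}$ being the map induced on cokernels by an isomorphism of cofibration sequences, hence itself an isomorphism $A \cong A'$.
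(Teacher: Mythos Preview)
Your proof is correct and rests on the same construction as the paper's: both fill in the missing entries of the staircase by choosing cokernels (equivalently, pushouts $0 \leftarrow A_{0,i} \rightarrowtail A_{0,j}$) of the cofibrations in the bottom row. The paper's argument is terser, simply declaring this assignment to be an inverse on isomorphism classes, whereas you prove the stronger statement that the forgetful functor $S_n\mathcal{C}\to\mathcal{K}$ is an equivalence of categories and then pass to isomorphism classes; this categorical equivalence is in fact stated and used separately later in the paper (Lemmas~7.2 and~8.1), so your route effectively absorbs those into the present lemma.
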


\begin{proof}
An inverse is defined as follows. Suppose 
[$A_{0,1} \rightarrowtail A_{0,2} \rightarrowtail \cdots \rightarrowtail A_{0,n}]$ is an element of $M_n$. Let $A_{k,k} = 0$ for all $k$. Then for all other pairs $i,j$ with $0 < i \leq n$ and 
$0 \leq j \leq n$, let $A_{i,j}$ be a pushout object of the diagram $0 \leftarrow A_{0, i} \rightarrow A_{0,j}$ noting that all pushouts of a given diagram are isomorphic. Now, there 
are induced morphisms that make $[A]$ into an element of $\text{iso}(s_n\mathcal{C})$. 
\end{proof}

Similarly, if $\mathcal{C}$ is a category with fibrations and $E_n$ is the set of isomorphisms classes of chains of $(n - 1)$ fibrations, then the function $\text{iso}(s_n\mathcal{C}) \to E_n$ which sends $[A]$ to $[A_{0,n} \twoheadrightarrow A_{1,n} \twoheadrightarrow \cdots \twoheadrightarrow A_{n-1,n}]$ is a bijection.

\begin{proposition}
Let $\mathcal{T}$ be a triangulation of $P_n$. If each triangle of $\mathcal{T}$ contains the vertex $0$, then the $2$-Segal map $f_\mathcal{T}$ of $\text{iso}(s_\bullet \mathcal{C})$ associated to $\mathcal{T}$ is a bijection.
\end{proposition}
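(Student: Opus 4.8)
The plan is to deduce the statement from the two lemmas already proved in this section, reducing everything to a gluing statement about chains of cofibrations. Since every triangle of $\mathcal{T}$ contains the vertex $0$, the first lemma of this section applies, so it suffices to check that $\text{iso}(s_\bullet\mathcal{C})$ satisfies its hypothesis: that for every $n\geq 3$ and every $0\leq j\leq n$ the single-diagonal map
\[
f\colon \text{iso}(s_n\mathcal{C}) \longrightarrow \text{iso}(s_{\{0,j,j+1,\ldots,n\}}\mathcal{C}) \times_{\text{iso}(s_{\{0,j\}}\mathcal{C})} \text{iso}(s_{\{0,1,\ldots,j\}}\mathcal{C})
\]
is a bijection. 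I would then invoke the lemma identifying $\text{iso}(s_k\mathcal{C})$ with $M_k$ via $\mu_k$ to rewrite $f$ in the language of chains: under $\mu$, the source is the set of isomorphism classes of chains $A_{0,1}\rightarrowtail\cdots\rightarrowtail A_{0,n}$, and $f$ sends such a class to the pair consisting of the class of the terminal segment $A_{0,j}\rightarrowtail\cdots\rightarrowtail A_{0,n}$ and the class of the initial segment $A_{0,1}\rightarrowtail\cdots\rightarrowtail A_{0,j}$, matched over the isomorphism class of the shared object $A_{0,j}$.

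Next I would construct a candidate inverse $g$ by gluing. Given a pair of classes in the fibre product, choose representatives $A_{0,1}\rightarrowtail\cdots\rightarrowtail A_{0,j}$ and $B_{0,j}\rightarrowtail\cdots\rightarrowtail B_{0,n}$ together with an isomorphism $\theta\colon A_{0,j}\xrightarrow{\cong} B_{0,j}$ (which exists since the two classes agree over $\text{iso}(s_{\{0,j\}}\mathcal{C})$), and form the concatenated chain whose junction cofibration is $(B_{0,j}\rightarrowtail B_{0,j+1})\circ\theta$; as $\theta$ is an isomorphism, hence a cofibration, this is again a chain of cofibrations. This assignment visibly splits $f$ on the nose: restricting the glued chain to its initial and terminal segments returns the two given classes, so $f\circ g$ is the identity of the fibre product, and taking $\theta=\id$ shows that regluing the two segments of a given chain reproduces that chain, so $g\circ f$ is the identity of $\text{iso}(s_n\mathcal{C})$. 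Thus the entire proof reduces to checking that $g$ is \emph{well defined} on isomorphism classes, that is, that the class of the glued chain is independent of the chosen representatives and, crucially, of the isomorphism $\theta$.

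The main obstacle is exactly this well-definedness. Two choices of $\theta$ differ by an automorphism $\rho$ of the shared object $A_{0,j}$, and producing an isomorphism between the two resulting glued chains amounts to \emph{absorbing} $\rho$, either by extending it to an automorphism of the terminal segment $A_{0,j}\rightarrowtail\cdots\rightarrowtail A_{0,n}$, or by realizing it through an automorphism of the initial segment that preserves the flag $A_{0,1}\subset\cdots\subset A_{0,j}$. I expect this to be the heart of the argument: one must show that every automorphism of $A_{0,j}$ can be reconciled in one of these two ways. I would attack it by induction on the length of the terminal segment, at each stage extending the relevant automorphism across a single cofibration $A_{0,k}\rightarrowtail A_{0,k+1}$ while tracking the induced automorphism on the chosen cokernel. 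The inductive step, extending an automorphism of the source of a cofibration to its total object compatibly with the cokernel, is the one delicate point, and it is where the structure of $\mathcal{C}$ must genuinely be used; the remaining bookkeeping, including independence of the chosen representatives, should then follow formally.
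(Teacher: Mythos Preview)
Your route is exactly the paper's: reduce via Lemma~6.1 to the single-diagonal maps, then use Lemma~6.3 to replace $\text{iso}(s_k\mathcal{C})$ by $M_k$ and argue that the resulting cut-and-glue map
\[
\phi_j\colon M_n \longrightarrow M_j \times_{\text{iso}(s_{\{0,j\}}\mathcal{C})} M_{n-j+1}
\]
is a bijection. The paper simply \emph{asserts} that $\phi_j$ is a bijection, while you have correctly located the nontrivial content: surjectivity is clear, and injectivity is exactly your well-definedness problem for $g$, namely whether different choices of the matching isomorphism $\theta$ produce isomorphic glued chains.

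Your proposed resolution, however, cannot work. Extending an automorphism of $A_{0,j}$ across a cofibration $A_{0,j}\rightarrowtail A_{0,j+1}$ is not possible in a general category with cofibrations, and the failure is fatal rather than merely technical. Take $\mathcal{C}$ to be abelian groups with monomorphisms as cofibrations, $n=3$, $j=2$, and consider the two chains
\[
\mathbb{Z}/2 \xrightarrow{\,1\mapsto(1,0)\,} (\mathbb{Z}/2)^2 \xrightarrow{\,(a,b)\mapsto(2a,b)\,} \mathbb{Z}/4\times\mathbb{Z}/2,
\qquad
\mathbb{Z}/2 \xrightarrow{\,1\mapsto(1,0)\,} (\mathbb{Z}/2)^2 \xrightarrow{\,(a,b)\mapsto(2b,a)\,} \mathbb{Z}/4\times\mathbb{Z}/2.
\]
Their initial segments are equal and their terminal segments are isomorphic (via the swap on $(\mathbb{Z}/2)^2$ and the identity on $\mathbb{Z}/4\times\mathbb{Z}/2$), so they have the same image under $\phi_2$. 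But any isomorphism of the full chains would force an automorphism $\gamma$ of $\mathbb{Z}/4\times\mathbb{Z}/2$ with $\gamma(2,0)=(0,1)$, which is impossible since $2\cdot\gamma(1,0)=(2,0)$ for every automorphism $\gamma$. Thus $\phi_2$ is not injective here. So the step you flagged as ``the one delicate point'' is not a difficulty to be overcome by a clever induction; it is a genuine obstruction, and the paper's unproved assertion that $\phi_j$ is a bijection shares the same gap.
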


\begin{proof}
Our proof is an adaptation of Dyckerhoff and Kapranov's proof of a similar result \cite[2.4.8]{DyckerhoffKapranov12}. By Lemma 6.1, it suffices to show that the $2$-Segal map $\text{iso}(s_n \mathcal{C}) \rightarrow 
\text{iso}(s_{\{0,1,\ldots, j\}} \mathcal{C}) \times_{\text{iso}(s_{\{0,j\}}\mathcal{C})} \text{iso}(s_{\{0,j,j + 1, \ldots, n\}} \mathcal{C})$ is a bijection for any $j$. This $2$-Segal map fits into the commutative diagram 

\[\begin{tikzcd}[ampersand replacement=\&, sep = small]
	{\text{iso}(s_n\mathcal{C})} \&\& {\text{iso}(s_{\{0,1,\ldots,j\}}\mathcal{C})\times_{\text{iso}(s_{\{0,j\}}\mathcal{C})}\text{iso}(s_{\{0,j,j+1,\ldots,n\}}\mathcal{C})} \\
	\\
	{M_n} \&\& {M_j \times_{\text{iso}(s_{\{0,j\}}\mathcal{C})}M_{n - j + 1}}
	\arrow["{\mu_j \times \mu_{n - j + 1}}", from=1-3, to=3-3]
	\arrow[from=1-1, to=1-3]
	\arrow["{\mu_n}"', from=1-1, to=3-1]
	\arrow["{\phi_j}"', from=3-1, to=3-3]
\end{tikzcd}\] where $\phi_j$ is a bijection given by $[A_{0,1} \rightarrowtail A_{0,2} \rightarrowtail \cdots \rightarrowtail A_{0,n}] \mapsto ([A_{0,1} \rightarrowtail A_{0,2} \rightarrowtail \cdots \rightarrowtail A_{0,j}],  [A_{0,j} \rightarrowtail A_{0,j +2} \rightarrowtail \cdots \rightarrowtail A_{0,n}])$ so the result follows by Lemma 6.3.
\end{proof}

In essence, the above argument worked because having all polygons in the subdivision containing the vertex 0 means that objects in the right hand side of the $2$-Segal map have complete top rows. A complete top row is enough to specify an object in $S_\bullet \mathcal{C}$ up to isomorphism because all the lower parts of an object of $S_n\mathcal{C}$ are pushouts of the cofibrations in the top row, and taking pushouts of cofibrations is exactly what we can do in a category with cofibrations.

Call the $2$-Segal maps corresponding to triangulations where each triangle has the vertex $0$ the left $2$-Segal maps. Call the $2$-Segal maps corresponding to triangulations where each triangle has the vertex $n$ the right $2$-Segal maps. We prove that the left $2$-Segal maps are the \emph{only} $2$-Segal maps corresponding to triangulations for $\text{iso}(s_\bullet \mathcal{C})$ that are always bijective. Specifically we prove the following proposition. 

\begin{proposition}
For $\text{iso}(s_\bullet\mathcal{C})$ the $2$-Segal maps $f_\mathcal{T}$ which are not left $2$-Segal are not necessarily surjective. 
\end{proposition}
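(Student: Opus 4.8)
The plan is to trace the non-surjectivity to a single failure of lifting and to realize that failure in a generated category with cofibrations. A non-left triangulation $\mathcal{T}$ of $P_n$ has, by definition, a triangle missing the vertex $0$, equivalently a diagonal $d = \{a,b\}$ with $1 \le a < b \le n$; the smallest instance is $P_3$ with diagonal $\{1,3\}$, giving triangles $\{0,1,3\}$ and $\{1,2,3\}$. First I would analyze the map for this square. Using the bijection $\mu$ of Lemma 6.3, the source $\text{iso}(s_3\mathcal{C})$ is the set of iso classes of chains $A_{0,1} \rightarrowtail A_{0,2} \rightarrowtail A_{0,3}$, while the target $\text{iso}(s_{\{1,2,3\}}\mathcal{C}) \times_{\text{iso}(s_{\{1,3\}}\mathcal{C})} \text{iso}(s_{\{0,1,3\}}\mathcal{C})$ is the set of pairs $([A_{1,2} \rightarrowtail A_{1,3}], [A_{0,1} \rightarrowtail A_{0,3}])$ of cofibrations with $A_{1,3} \cong A_{0,3}/A_{0,1}$. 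The map sends a chain to $([A_{0,2}/A_{0,1} \rightarrowtail A_{0,3}/A_{0,1}],[A_{0,1} \rightarrowtail A_{0,3}])$, so it is surjective precisely when every cofibration $A_{0,1} \rightarrowtail A_{0,3}$ together with every subobject $A_{1,2} \rightarrowtail A_{0,3}/A_{0,1}$ of its cokernel is realized by an intermediate object $A_{0,2}$ with $A_{0,1} \rightarrowtail A_{0,2} \rightarrowtail A_{0,3}$. In other words, surjectivity is exactly the statement that subobjects of a cokernel lift along the quotient map, and the task is to build a category with cofibrations where this fails.

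To produce such a $\mathcal{C}$ I would take the category with cofibrations generated by the two cofibration sequences $a \rightarrowtail b \twoheadrightarrow q$ and $s \rightarrowtail q \twoheadrightarrow t$, so that the object $q$ is at once the cokernel of the first and the total object of the second. (That this generating data is consistent is visible from the abelian-group model $\mathbb{Z}/2 \rightarrowtail \mathbb{Z}/8 \twoheadrightarrow \mathbb{Z}/4$ and $\mathbb{Z}/2 \rightarrowtail \mathbb{Z}/4 \twoheadrightarrow \mathbb{Z}/2$; there the would-be lift is the nonsplit subgroup $\mathbb{Z}/4 \subseteq \mathbb{Z}/8$, precisely the object the generated construction declines to manufacture.) The pair $\xi = ([s \rightarrowtail q],[a \rightarrowtail b])$ lies in the target, since both entries restrict to $[q]$ over $\text{iso}(s_{\{1,3\}}\mathcal{C}) \cong \text{iso}(\mathcal{C})$. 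A preimage of $\xi$ would be a factorization $a \rightarrowtail A_{0,2} \rightarrowtail b$ of the generating cofibration whose induced subobject $A_{0,2}/a \rightarrowtail q$ is isomorphic to $s \rightarrowtail q$ — equivalently, a preimage of $s$ under the cokernel epimorphism $b \twoheadrightarrow q$. The crux is that in a generated category the objects and cofibrations are assembled only from zero objects, composites, and pushouts of cofibrations, and none of these operations factors a generating cofibration or forms the pullback/preimage that $A_{0,2}$ would require; hence no such $A_{0,2}$ exists and the square map is not surjective.

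For a general non-left triangulation $\mathcal{T}$ I would localize this at a non-left diagonal $d = \{a,b\}$ of $\mathcal{T}$ and use the compatibility of $2$-Segal maps with refinement, which factors the triangulation map as $f_\mathcal{T} = f_{\mathcal{T}/d} \circ f_d$, where $f_d$ is the single-diagonal map cutting $P_n$ into the sub-polygons $\{a, \dots, b\}$ and $\{0, \dots, a, b, \dots, n\}$. Inside $\text{iso}(s_{\{a,\dots,b\}}\mathcal{C})$ I would take the chain $s \rightarrowtail q \rightarrowtail q \rightarrowtail \cdots \rightarrowtail q$ (the first step as above, the rest identities), and inside $\text{iso}(s_{\{0,\dots,a,b,\dots,n\}}\mathcal{C})$ the chain that is $a$ up to the $a$-th vertex, then $a \rightarrowtail b$, then $b$ thereafter; these agree over the shared edge $\{a,b\}$ and so define an element $\xi_d$ of the target of $f_d$ whose only obstruction to lifting is the single step $a \rightarrowtail A_{0,a+1} \rightarrowtail b$ with $A_{0,a+1}/a \cong s$, which fails exactly as in the square. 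Thus $\xi_d \notin \text{im}(f_d)$. The refinement map $f_{\mathcal{T}/d}$ is built from the $2$-Segal maps of the two sub-polygons, which are injective by the uniqueness of lifts in the rigid generated category; hence $f_{\mathcal{T}/d}$ is injective, and the image $f_{\mathcal{T}/d}(\xi_d) \in \lim_\mathcal{T}$ cannot lie in $f_{\mathcal{T}/d}(\text{im}(f_d)) = \text{im}(f_\mathcal{T})$, so $f_\mathcal{T}$ is not surjective.

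The main obstacle is entirely bound up with the generated category $\mathcal{C}$, and it is twofold: verifying that closing the generating data under zero objects, composites, and pushouts of cofibrations really yields a category with cofibrations in which the intermediate object $A_{0,2}$ never appears, and establishing the rigidity (uniqueness of lifts) that makes $f_{\mathcal{T}/d}$ injective in the general step. The former is the heart of the matter; it is where the weakness of the hypothesis — that $\mathcal{C}$ is only a category with cofibrations, with no pullbacks of admissible epimorphisms as in the proto-exact setting — is exploited, and it is exactly what the formalism of generated categories with cofibrations is built to guarantee. In contrast to Proposition 6.4, where completeness of the top row forced the lift to exist, here the generated construction is arranged so that the top row cannot be completed.
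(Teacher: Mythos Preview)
Your base case is close in spirit to the paper's Proposition 6.9: both build a generated subcategory with cofibrations in which the would-be intermediate object $A_{0,2}$ simply does not exist. The paper carries this out concretely inside $\mathcal{R}_f$ (finite CW-complexes), generating from a diagram involving $S^1 \rightarrowtail T^2 \twoheadrightarrow T^2/S^1$ and $\hat{\Delta} \rightarrowtail T^2/S^1 \twoheadrightarrow S^2$, and then gives a careful cell-counting argument (tracking how many $1$-cells a $2$-cell can be attached along after iterated pushouts) to prove the missing object never appears. Your abelian-group model is a reasonable alternative, but the claim that ``none of these operations factors a generating cofibration'' is exactly the content that needs proof, and the paper's argument shows this step is not automatic.

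The real gap is in your general step. You factor $f_\mathcal{T} = f_{\mathcal{T}/d} \circ f_d$ along a non-left diagonal $d$ and then need $f_{\mathcal{T}/d}$ to be \emph{injective} so that $\xi_d \notin \mathrm{im}(f_d)$ propagates to $f_{\mathcal{T}/d}(\xi_d) \notin \mathrm{im}(f_\mathcal{T})$. But $f_{\mathcal{T}/d}$ is the product of the $2$-Segal maps of the two sub-polygons with their induced triangulations, and at least one of those induced triangulations is typically itself non-left (for example, for $\mathcal{T} = \{\{0,1,4\},\{1,2,4\},\{2,3,4\}\}$ on $P_4$ and $d = \{2,4\}$, the sub-polygon $\{0,1,2,4\}$ inherits the non-left triangulation $\{\{0,1,4\},\{1,2,4\}\}$). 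There is no reason those $2$-Segal maps are injective in general, and ``rigidity of the generated category'' is an assertion, not an argument; you would need to classify all chains of cofibrations in your generated $\mathcal{C}$ and verify injectivity by hand.

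The paper sidesteps this entirely by running the induction in the opposite direction. Instead of cutting along a diagonal and needing injectivity, it removes a \emph{consecutive} triangle $\{j-1,j,j+1\}$ (Lemma 6.11 guarantees one exists) and uses the commuting square
\[
\begin{tikzcd}[sep=small]
\mathrm{iso}(s_{k+1}\mathcal{C}) \ar[r,"f_\mathcal{T}"] \ar[d,"d_j"'] & \lim_\mathcal{T}\mathrm{iso}(s_\bullet\mathcal{C}) \ar[d] \\
\mathrm{iso}(s_k\mathcal{C}) \ar[r,"f_{\mathcal{T}'}"'] & \lim_{\mathcal{T}'}\mathrm{iso}(s_\bullet\mathcal{C}),
\end{tikzcd}
\]
where both vertical maps are \emph{surjective} (the left is a face map, the right is Lemma 6.13), and the bottom is not surjective by induction; hence the top cannot be surjective. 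This surjectivity-based reduction is the key organizational idea you are missing, and it is much easier to verify than the injectivity you need.
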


First we provide a category with cofibrations such that some of its $2$-Segal maps of the type in Proposition 6.5 are not surjective. We then use an inductive  
argument relying on the structure of triangulations of polygons to show that for this category with cofibrations none of the $2$-Segal maps of Proposition 6.5 are surjective. 

We want a category with cofibrations with as few cofibrations as possible. We are thus led to a notion of generated categories with cofibrations. 

\begin{definition}
Let $\mathcal{D}$ be a subcategory of $\mathcal{C}$ that contains a $0$ object of $\mathcal{C}$ and all morphisms between its objects and $0$ that are in $\mathcal{C}$. Then we denote the intersection of all subcategories with cofibrations of 
$\mathcal{C}$ containing $\mathcal{D}$ by $\langle \mathcal{D} \rangle$. We call $\langle \mathcal{D} \rangle$ the \emph{category with cofibrations generated by $\mathcal{D}$ in $\mathcal{C}$}.
\end{definition}

Now we justify the terminology.

\begin{lemma}
The intersection of a collection of subcategories with cofibrations of $\mathcal{C}$ is a category with cofibrations. 
\end{lemma}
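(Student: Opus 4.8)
The plan is to prove the slightly stronger statement that the intersection $\mathcal{A} := \bigcap_{i \in I} \mathcal{A}_i$ of a collection $\{\mathcal{A}_i\}_{i \in I}$ of subcategories with cofibrations of $\mathcal{C}$ is itself a subcategory with cofibrations of $\mathcal{C}$; since a subcategory with cofibrations is in particular a category with cofibrations, this suffices. First I would note that $\mathcal{A}$ is a subcategory of $\mathcal{C}$ and contains the zero object $0$: each inclusion $\mathcal{A}_i \hookrightarrow \mathcal{C}$ is exact, hence preserves the zero object, so $0 \in \mathcal{A}_i$ for all $i$ and therefore $0 \in \mathcal{A}$. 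I then declare the cofibrations of $\mathcal{A}$ to be the $\mathcal{C}$-cofibrations with source, target, and cokernel in $\mathcal{A}$, which is the only choice consistent with the definition of a subcategory with cofibrations.

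The technical heart of the bookkeeping is a single characterization I would record first: a morphism $f$ of $\mathcal{A}$ is a cofibration of $\mathcal{A}$ if and only if $f$ is a cofibration in $\mathcal{A}_i$ for every $i$. This holds because each $\mathcal{A}_i$ declares its cofibrations to be the $\mathcal{C}$-cofibrations with cokernel in $\mathcal{A}_i$, and because the cokernel is computed the same way in $\mathcal{A}_i$ as in $\mathcal{C}$: the exact inclusion preserves the pushout of $0 \leftarrow A \rightarrowtail B$ that defines it. Thus ``cokernel in $\mathcal{A}$'' is exactly ``cokernel in every $\mathcal{A}_i$''. From this the three easy axioms follow immediately. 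For $A \in \mathcal{A}$ the map $0 \to A$ is a $\mathcal{C}$-cofibration with cokernel $A \in \mathcal{A}$; any isomorphism of $\mathcal{A}$ is a $\mathcal{C}$-cofibration with cokernel $0 \in \mathcal{A}$; and if $f, g$ are cofibrations of $\mathcal{A}$ they are cofibrations in each $\mathcal{A}_i$, so $gf$ is as well (each $\mathcal{A}_i$ being a category with cofibrations), whence $gf$ is a cofibration of $\mathcal{A}$ by the characterization.

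The main obstacle is the pushout axiom, which demands both that the pushout exist in $\mathcal{A}$ and that the cobase change be a cofibration there. Given a cofibration $f \colon A \rightarrowtail B$ of $\mathcal{A}$ and an arbitrary map $A \to C$ in $\mathcal{A}$, I would form the pushout $P = B \cup_A C$ in $\mathcal{C}$. For each $i$ the pushout exists in $\mathcal{A}_i$, and since $\mathcal{A}_i \hookrightarrow \mathcal{C}$ is exact it preserves this pushout, so the pushout object and structure maps computed in $\mathcal{A}_i$ coincide with $P$ and its maps in $\mathcal{C}$. Consequently $P$, together with $B \to P$ and $C \to P$, lies in every $\mathcal{A}_i$, hence in $\mathcal{A}$, and the cobase change $C \to P$ is a cofibration in each $\mathcal{A}_i$, hence a cofibration of $\mathcal{A}$.

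The genuinely delicate point, and the crux of the argument, is verifying the universal property \emph{in} $\mathcal{A}$ rather than merely in $\mathcal{C}$: given $Q \in \mathcal{A}$ with a compatible cone $B \to Q \leftarrow C$ in $\mathcal{A}$, the comparison map $P \to Q$ produced by the $\mathcal{C}$-pushout must be shown to lie in $\mathcal{A}$. Here I would again use that $P$ is the pushout in each $\mathcal{A}_i$: the comparison map exists in $\mathcal{A}_i$, and by uniqueness of the comparison map in $\mathcal{C}$ it must agree with the $\mathcal{C}$-comparison map, so the latter lies in $\mathcal{A}_i$ for every $i$ and therefore in $\mathcal{A}$. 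Uniqueness in $\mathcal{A}$ is inherited from uniqueness in $\mathcal{C}$. This confirms that $P$ is the pushout in $\mathcal{A}$ and completes the verification of all four axioms, establishing that $\mathcal{A}$ is a category with cofibrations.
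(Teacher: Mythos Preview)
Your proof is correct and follows the same approach as the paper---verify each axiom by using that it holds in every $\mathcal{A}_i$---though the paper's version is only a two-sentence sketch that checks the zero object and observes that the pushout of a cofibration lands in each $\mathcal{D}_a$. You are considerably more careful, in particular about declaring the cofibration structure, recording the characterization of cofibrations in the intersection, and verifying the universal property of the pushout \emph{inside} $\mathcal{A}$; your argument in fact establishes the slightly stronger conclusion that the intersection is again a subcategory with cofibrations.
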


\begin{proof}
Let $\mathcal{D}_a$ be a subcategory with cofibrations of $\mathcal{C}$ for all $a \in I$. Then 
$\cap_{a \in I} \mathcal{D}_a$ contains $0$ and it is a zero object for this category. Suppose $f$ is a cofibration in 
$\mathcal{C}$ which is in $\cap_{a \in I} \mathcal{D}_a$ and $g$ is a morphism in $\cap_{a \in I} \mathcal{D}_a$. Then 
the pushout of $f$ along $g$ is in all $\mathcal{D}_a$.
\end{proof}

\begin{remark}
We can also describe $\langle \mathcal{D} \rangle$ as a colimit. 
Let $\mathcal{D} = \mathcal{D}_0$ be a subcategory of $\mathcal{C}$ containing a zero object of $\mathcal{C}$ and all morphisms between its objects and $0$ that are in $\mathcal{C}$. For all $i \geq 0$, let $\mathcal{D}_{i + 1}$ be the smallest subcategory of $\mathcal{C}$ containing $\mathcal{D}_i$, all pushouts of a cofibration in $\mathcal{D}_i$ along a morphism in $\mathcal{D}_i$ and all morphisms between $0$ and objects of $\mathcal{D}$ and these pushouts. Then we claim that $\text{colim}_i \mathcal{D}_i \cong \langle \mathcal{D} \rangle$. Each $\mathcal{D}_i$ is necessarily in 
$\langle \mathcal{D} \rangle$ so we just need to show that  $\text{colim}_i \mathcal{D}_i $ is a subcategory with cofibrations
of $\mathcal{C}$. The zero object condition follows from each $\mathcal{D}_i$ satisfying the same condition. Suppose
$f$ is a morphism of $\mathcal{D}_i$ and $g$ is a cofibration of  $\mathcal{D}_j$. Then $f$ and $g$ are 
morphisms in $\mathcal{D}_{\max(i,j)}$ so the pushouts of $f$ along $g$ and $g$ along $f$ are in $\mathcal{D}_{\max(i,j) + 1}$. 
\end{remark}

The category $\mathcal{D}$ appearing in the following proposition is used to prove Proposition 6.5.

\begin{proposition}
There is a category with cofibrations $\mathcal{D}$ such that the $2$-Segal maps $\text{iso}(s_n \mathcal{D}) \to \text{iso}(s_{\{0,1,n\}}\mathcal{D}) \times_{\text{iso}(s_{\{1,n\}} \mathcal{D})} \text{iso}(s_{\{1,2,\ldots, n\}}\mathcal{D})$ are not surjective. 
\end{proposition}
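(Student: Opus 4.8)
The plan is to identify this map with a subobject-lifting problem using Lemma 6.3, and then to build a generated category with cofibrations in which a specific lift is unavailable. Write $X=A_{0,1}$, $Z=A_{0,n}$, and $Y=A_{1,n}$. By Lemma 6.3 an element of $\text{iso}(s_{\{0,1,n\}}\mathcal{D})$ is a single cofibration $X\rightarrowtail Z$ with $\coker(X\rightarrowtail Z)\cong Y$, an element of $\text{iso}(s_{\{1,2,\ldots,n\}}\mathcal{D})$ is a chain $A_{1,2}\rightarrowtail\cdots\rightarrowtail A_{1,n}$, and both map to $\text{iso}(s_{\{1,n\}}\mathcal{D})$ by remembering $Y=A_{1,n}$. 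Thus a point of the target is a cofibration $X\rightarrowtail Z$ together with a filtration of its cokernel $Y$. A preimage in $\text{iso}(s_n\mathcal{D})$ is a chain $X=A_{0,1}\rightarrowtail A_{0,2}\rightarrowtail\cdots\rightarrowtail A_{0,n}=Z$ whose successive quotients $A_{0,k}/X$ recover the prescribed filtration; in particular $A_{0,2}$ must be a subobject of $Z$ lying between $X$ and $Z$ with $A_{0,2}/X\cong A_{1,2}$. In an exact category this lift always exists, as the preimage of $A_{1,2}$ under $Z\twoheadrightarrow Y$, which is precisely why $\text{iso}(s_\bullet\mathcal{C})$ is $2$-Segal there; the obstruction I exploit is that a category with cofibrations is only required to admit pushouts, not the pullbacks that would produce such preimages.

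It therefore suffices to treat the case $n=3$, where the filtration is a single cofibration $W\rightarrowtail Y$, and to produce $\mathcal{D}$ in which $W$ admits no lift to a subobject of $Z$ containing $X$. Working inside the ambient category with cofibrations of finite pointed sets (cofibrations the injections, cokernels the quotients), take $X=\{*,a\}$, $Z=\{*,a,b,c\}$ with $X\rightarrowtail Z$ the inclusion and cokernel $Y=\{*,\bar b,\bar c\}$, and take $W=\{*,\bar b\}\rightarrowtail Y$ with cokernel $Y/W=\{*,\bar c\}$. Let $\mathcal{D}_0$ be the subcategory on the objects $0,X,Z,Y,W,Y/W$ together with these two cofibrations, their cokernel maps, and all maps to and from $0$, and set $\mathcal{D}=\langle\mathcal{D}_0\rangle$. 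For general $n$ one uses the same $\mathcal{D}$ and pads the filtration to the chain $W\xrightarrow{=}\cdots\xrightarrow{=}W\rightarrowtail Y$, so that lifting still forces a lift of $W$ at the bottom.

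With this data the pair consisting of $[X\rightarrowtail Z]$ and $[W\rightarrowtail Y]$ is a genuine point of the target, since both cofibrations and all relevant cokernels lie in $\mathcal{D}$ and both restrict to $Y$ over $\{1,n\}$. I then argue that it has no preimage: a preimage would supply a cofibration $X\rightarrowtail A_{0,2}\rightarrowtail Z$ factoring $X\rightarrowtail Z$ with $A_{0,2}/X\cong W$, forcing $A_{0,2}$ to be a proper intermediate subobject of $Z$ over $X$, whereas $A_{0,2}=X$ gives quotient $0$ and $A_{0,2}=Z$ gives quotient $Y$, neither of which is isomorphic to $W$.

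The main obstacle is exactly the verification that $\langle\mathcal{D}_0\rangle$ contains no such intermediate cofibration. For this I would compute $\mathcal{D}$ through the iterated pushout-closure $\mathcal{D}_0\subseteq\mathcal{D}_1\subseteq\cdots$ of the Remark following the definition of $\langle\,\cdot\,\rangle$, tracking the morphisms whose target is $Z$. The point is that forming pushouts of the generating cofibrations along maps of $\mathcal{D}_i$ only creates cofibrations into the new pushout objects and never a new morphism into the fixed object $Z$, so the only morphisms $T\to Z$ in $\mathcal{D}$ remain $0\to Z$, $X\rightarrowtail Z$, and $\id_Z$ together with their composites. Consequently $X\rightarrowtail Z$ admits no nontrivial factorization in $\mathcal{D}$, the required lift of $W$ does not exist, and the $2$-Segal map fails to be surjective. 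Carrying out this bookkeeping carefully, confirming that no stray injection into $Z$ is forced by the closure process, is the one genuinely technical step.
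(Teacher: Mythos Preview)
Your approach is correct and takes a genuinely different route from the paper. Both you and the paper invoke the generated-category-with-cofibrations construction $\langle\mathcal{D}_0\rangle$ and exhibit a specific point of the fiber product that cannot be lifted; the reduction to $n=3$ via Lemma~6.3 and the padding for general $n$ are also parallel (though the paper pads with $0$'s before inserting the $n=3$ data, while you repeat $W$). The substantive difference is the ambient category and the invariant used to obstruct the lift. The paper works inside $\mathcal{R}_f$ with the cofibration $S^1\rightarrowtail T^2$ and the filtration $\hat\Delta\rightarrowtail T^2/S^1$ of its cokernel; the missing intermediate object would be a specific subcomplex $P$ of $T^2$, and its absence from $\langle\mathcal{D}_0\rangle$ is proved by tracking an invariant of \emph{objects}, namely the maximal number of $1$-cells along which any $2$-cell is attached, which can only decrease under pushouts. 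You instead work in finite pointed sets with a much smaller generating diagram and track an invariant of \emph{morphisms}: you argue that $\Hom_{\mathcal{D}}(-,Z)$ never grows beyond $\{0\to Z,\ X\rightarrowtail Z,\ \id_Z\}$ during the closure. Your approach buys a far simpler ambient category and a shorter object list; the paper's buys an invariant that is manifestly intrinsic and independent of how pushouts are modelled.

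The step you flag as ``the one genuinely technical step'' is real and does require the care you anticipate. The inductive claim that pushouts create no new morphisms into $Z$ hinges on no pushout object literally coinciding with $Z$; this is not automatic and depends on the convention for pushouts in the ambient category. With a reasonable choice (e.g.\ always forming the literal quotient of a disjoint union, so that wedges like $X\vee W$ are sets with elements $\{*,a,\bar b\}$ rather than $\{*,a,b\}$), the only pushouts equal to $Z$ are the degenerate ones already present, and your induction goes through. Making this explicit would complete the argument.
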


\begin{proof}
We first consider the $n = 3$ case. Showing that $$\text{iso}(s_3\mathcal{D}) \to \text{iso}(s_{\{0,1,3\}}\mathcal{D}) \times_{\text{iso}(s_{\{1,3\}}\mathcal{D})} \text{iso}(s_{\{1,2,3\}}\mathcal{D})$$ is not surjective
amounts to showing that there is a diagram 

\[\begin{tikzcd}[ampersand replacement=\&, sep = small]
	{A_{0,1}} \&\& \bullet \&\& {A_{0,3}} \\
	\\
	\&\& {A_{1,2}} \&\& {A_{1,3}} \\
	\\
	\&\&\&\& {A_{2,3}}
	\arrow[curve={height=-24pt}, tail, from=1-1, to=1-5]
	\arrow[two heads, from=1-5, to=3-5]
	\arrow[tail, from=3-3, to=3-5]
	\arrow[two heads, from=3-5, to=5-5]
\end{tikzcd}\] that cannot be completed to an object of $s_3 \mathcal{D}$.

Consider the subcategory of $\mathcal{R}_f$ generated by the diagram 

\[\begin{tikzcd}[ampersand replacement=\&, sep = small]
	\&\& {} \&\&\&\&\&\&\&\&\& {} \& \bullet \&\& \bullet \\
	\bullet  \&\& \bullet \&\&\&\&\& \bullet \&\&\&\&\&\&\&\& {T^2} \\
	\& {S^1} \&\&\&\&\&\&\&\&\&\&\& \bullet \&\& \bullet \\
	\&\&\&\&\&\&\&\&\&\&\&\&\& {} \\
	\\
	\&\&\&\&\&\&\&\&\&\&\&\&\& {} \\
	\&\&\&\&\&\& \bullet \&\& \bullet \& {} \&\& {} \& \bullet \&\& \bullet \& {T^2/S^1} \\
	\&\&\&\&\&\&\& {\hat{\Delta}} \&\&\&\&\&\& {} \\
	\\
	\&\&\&\&\&\&\&\&\&\&\&\&\& {} \\
	\&\&\&\&\&\&\&\&\&\&\&\&\& {S^2}
	\arrow["\shortmid"{marking}, no head, from=2-1, to=2-3]
	\arrow["\shortmid"{marking}, no head, from=1-13, to=3-13]
	\arrow["{||}"{description}, no head, from=1-13, to=1-15]
	\arrow["\shortmid"{marking}, no head, from=1-15, to=3-15]
	\arrow[no head, from=3-13, to=1-15]
	\arrow[curve={height=-30pt}, tail, from=1-3, to=1-12]
	\arrow["{||}"{description}, curve={height=-12pt}, no head, from=7-7, to=7-9]
	\arrow[no head, from=7-7, to=7-9]
	\arrow["{||}"{description}, no head, from=3-13, to=3-15]
	\arrow["{||}"{description}, curve={height=-12pt}, no head, from=7-13, to=7-15]
	\arrow["{||}"{description}, curve={height=12pt}, no head, from=7-13, to=7-15]
	\arrow[no head, from=7-13, to=7-15]
	\arrow[two heads, from=4-14, to=6-14]
	\arrow[tail, from=7-10, to=7-12]
	\arrow[two heads, from=8-14, to=10-14]
\end{tikzcd}\] where the same marking on different edges indicates that those edges should be identified with each other or is used to indicate the image of an object. The notation $\hat{\Delta}$ is used because the corresponding CW complex looks like a triangle turned into a hat by pinching two corners together. Let $\mathcal{D}_0$ be the smallest category containing the preceding diagram and all morphisms between its objects and $0$. To complete the above diagram to an object of $s_3\langle \mathcal{D}_0 \rangle$, we must insert 
an subcomplex of $T^2$, $X$, with $1$ $0$-cell, $3$ $1$-cells, and $1$ $2$-cell in the first row in order for $\hat{\Delta}$ to be the cokernel object of the inclusion of $S^1$ into $X$. Up to homeomorphism, the only subcomplex of $T^2$ with this number of cells in each dimension is $P$, a triangle viewed as a CW complex in the natural way with all of its vertices identified.  Recall how $\mathcal{D}_{i +1}$ (see Remark 6.8) is constructed from $\mathcal{D}_i$ and that $\langle \mathcal{D}_0 \rangle = \text{colim}_i\mathcal{D}_i$. We claim that $P$ is not in $\langle \mathcal{D}_0 \rangle$. First, note that every object in $\langle \mathcal{D}_0 \rangle$ has one $0$-cell. This is because all the objects in $\mathcal{D}_0$ have one $0$-cell as their basepoint and all the morphisms in $\mathcal{D}_0$ are basepoint preserving. Also, note that for all $j$, the objects that are in $\mathcal{D}_{j+1}$ but not $\mathcal{D}_j$ are pushout objects of pushout diagrams like

\[\begin{tikzcd}[sep = small]
	X && Y \\
	\\
	Z && {X \sqcup Y/(x \sim f(x))}
	\arrow[tail, from=1-1, to=1-3]
	\arrow["f"', from=1-1, to=3-1]
	\arrow[from=1-3, to=3-3]
	\arrow[tail, from=3-1, to=3-3]
\end{tikzcd}\] whose left vertical morphisms and top horizontal morphisms are in $\mathcal{D}_j$. This implies that if a $2$-cell in $Y$ is attached along $k$ $1$-cells then the image of that $2$-cell in $X \sqcup Y/(x \sim f(x))$ is attached along at most $k$ $1$-cells. Since the bottom map is a cellular inclusion, a $2$-cell in the pushout that is also in $Z$ is attached along the same number of $1$-cells that it is in $Z$. Since the most $1$-cells that a $2$-cell is attached along among the objects of $\mathcal{D}_0$ is $3$, it follows that all $2$-cells in any object of $\langle \mathcal{D}_0 \rangle$ are attached along at most $3$ $1$-cells. 

 Now suppose to reach a contradiction that $j$ is the smallest number for which an object of the form $W \vee P$ is in $\mathcal{D}_j$ (where $W$ might be a point). Then there is a pushout diagram of the form 

 \[\begin{tikzcd}[sep = small]
	X && Y \\
	\\
	Z && {W \vee P}
	\arrow[tail, from=1-1, to=1-3]
	\arrow["f"', from=1-1, to=3-1]
	\arrow[from=1-3, to=3-3]
	\arrow[tail, from=3-1, to=3-3]
\end{tikzcd}\] where the left vertical morphism and top horizontal morphism are in $\mathcal{D}_{j - 1}$. The $2$-cell $C$ in $P$ in $W \vee P$ is attached along $3$ $1$-cells. There is a $2$-cell in either $Z$ or $Y$ that gets mapped via the bottom horizontal or right vertical map onto $C$. Suppose a $2$-cell in $Z$ gets mapped onto $C$. Then since the bottom horizontal map is a cellular inclusion, $P$ is a wedge summand of $Z$ which is a contradiction. Suppose a $2$-cell $C'$ in $Y$ gets mapped onto $C$ by the right vertical map. Then $C'$ must be attached in $Y$ along $3$ $1$-cells by earlier remarks. If there is another $2$-cell $C''$ in $Y$ that is attached along one of the $1$-cells that $C'$ is attached along, then that common $1$-cell gets contracted by the right vertical map since there is not a $2$-cell attached to $C$ along a common $1$-cell in $W \vee P$. But then there would be at most $2$ $1$-cells that $P$ is attached along in $W \vee P$. This is a contradiction so we conclude that no $2$-cell shares a common bounding $1$-cell with $C'$. Thus $P$ is a wedge summand of $Y$, a contradiction. We conclude that $P$ is not in $\langle \mathcal{D}_0 \rangle$ so we have the $n = 3$ case of the proposition.

Now in any subcategory with cofibrations of $\mathcal{R}_f$, the $n=3$ case implies the other cases because then the diagram \[\begin{tikzcd}[ampersand replacement=\&, sep = small]
	{A_{0,1}} \&\& \bullet \&\& \bullet \&\& \cdots \&\& \bullet \&\& \bullet \&\& {A_{0,3}} \\
	\\
	\&\& 0 \&\& 0 \&\& \cdots \&\& 0 \&\& {A_{1,2}} \&\& {A_{1,3}} \\
	\&\&\&\&\&\&\&\&\&\& {} \\
	\&\&\&\& 0 \&\& \cdots \&\& 0 \&\& {A_{1,2}} \&\& {A_{1,3}} \\
	\\
	\&\&\&\&\&\& \ddots \&\& \vdots \&\& \vdots \&\& \vdots \\
	\\
	\&\&\&\&\&\&\&\& 0 \&\& {A_{1,2}} \&\& {A_{1,3}} \\
	\\
	\&\&\&\&\&\&\&\&\&\& {A_{1,2}} \&\& {A_{1,3}} \\
	\\
	\&\&\&\&\&\&\&\&\&\&\&\& {A_{2,3}}
	\arrow[curve={height=-24pt}, tail, from=1-1, to=1-13]
	\arrow[tail, from=3-3, to=3-5]
	\arrow[tail, from=3-5, to=3-7]
	\arrow[tail, from=3-7, to=3-9]
	\arrow[tail, from=3-9, to=3-11]
	\arrow[tail, from=3-11, to=3-13]
	\arrow[two heads, from=1-13, to=3-13]
	\arrow[two heads, from=3-5, to=5-5]
	\arrow[two heads, from=3-9, to=5-9]
	\arrow[two heads, from=3-11, to=5-11]
	\arrow[two heads, from=3-13, to=5-13]
	\arrow[tail, from=5-5, to=5-7]
	\arrow[tail, from=5-7, to=5-9]
	\arrow[tail, from=5-9, to=5-11]
	\arrow[tail, from=5-11, to=5-13]
	\arrow[two heads, from=5-9, to=7-9]
	\arrow[two heads, from=5-11, to=7-11]
	\arrow[two heads, from=5-13, to=7-13]
	\arrow[two heads, from=7-9, to=9-9]
	\arrow[two heads, from=7-11, to=9-11]
	\arrow[two heads, from=7-13, to=9-13]
	\arrow[tail, from=9-9, to=9-11]
	\arrow[tail, from=9-11, to=9-13]
	\arrow[tail, from=11-11, to=11-13]
	\arrow[two heads, from=9-11, to=11-11]
	\arrow[two heads, from=9-13, to=11-13]
	\arrow[two heads, from=11-13, to=13-13]
\end{tikzcd}\]

 cannot be completed to an object of $s_n\mathcal{D}$. 

\end{proof}

\begin{remark}
In a conversation, Maxine Calle pointed out that for any finite CW complex $X$ we can do the above argument in an analogous subcategory of $\mathcal{R}_f(X)$ since the $X$ gets carried along in taking pushouts. 
\end{remark}

We now take a detour into the structure of triangulations of polygons needed to set up an inductive argument for the proof of Proposition 6.5. 

\begin{lemma}\label{StrictConsecTri}
Every triangulation of a $P_n$ has a triangle whose vertices are labeled by
$j - 1, j, j+ 1$ for some $j$.
\end{lemma}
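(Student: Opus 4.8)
The plan is to recognize the statement as a labeled version of the classical "two ears theorem" for triangulations of a convex polygon, and then to use the cyclic labeling to decide which ear to keep. Recall that since $P_n$ is a regular, hence convex, $(n+1)$-gon, any triangulation $\mathcal{T}$ consists of exactly $n-1$ triangles glued along $n-2$ diagonals, and an \emph{ear} of $\mathcal{T}$ is a triangle two of whose edges lie on the boundary of $P_n$. An ear has a well-defined \emph{tip}: the vertex $v$ shared by its two boundary edges. For an interior label $v \in \{1, \ldots, n-1\}$ the two boundary edges at $v$ are $\{v-1, v\}$ and $\{v, v+1\}$, so an ear with tip $v$ is precisely a triangle with vertex set $\{v-1, v, v+1\}$ — exactly the configuration the lemma asks for, with $j = v$. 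Thus it suffices to produce an ear whose tip is neither the vertex $0$ nor the vertex $n$.

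First I would dispose of the base case $n = 2$, where $P_2$ is itself a triangle and the unique triangulation is $\{0,1,2\}$, giving the conclusion with $j = 1$. For $n \geq 3$ I would establish the existence of at least two ears via the dual-tree argument: form the graph whose vertices are the $n-1$ triangles of $\mathcal{T}$ and whose edges are the $n-2$ diagonals, each joining the two triangles it separates. Counting vertices against edges shows this graph is a tree, and a tree with at least two vertices (which holds since $n-1 \geq 2$) has at least two leaves; a leaf is a triangle incident to at most one diagonal, hence with at least two boundary edges, i.e. an ear. So $\mathcal{T}$ has at least two ears, and distinct ears have distinct tips.

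The heart of the argument, and the only place the cyclic labeling actually enters, is ruling out that both ears are "bad", i.e. tipped at $0$ and at $n$. Here I would observe that an ear tipped at $0$ must be the triangle $\{n,0,1\}$ and an ear tipped at $n$ must be the triangle $\{n-1,n,0\}$, and that both of these contain the boundary edge $\{0,n\}$. Since a boundary edge of $P_n$ lies in exactly one triangle of any triangulation, and the two candidate triangles are distinct once $n \geq 3$, the vertices $0$ and $n$ cannot both be ear tips. Therefore among the (at least two) ears, at most one has tip in $\{0,n\}$, so some ear has tip $j \in \{1, \ldots, n-1\}$, and its triangle has vertices $j-1, j, j+1$.

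I expect the main obstacle to be exactly this last piece of bookkeeping: the two-ears theorem by itself is not enough, since a priori both ears could sit at the two wrap-around vertices $0$ and $n$, and the content of the labeling hypothesis is that these are the only forbidden tips. The shared-edge observation is what forces the two forbidden ears to be mutually exclusive. If I wanted to avoid citing the two-ears theorem as a black box, I would instead run a direct induction, peeling off the ear containing the boundary edge $\{0,1\}$ and relabeling the smaller polygon, but I anticipate the ear-counting argument above to be shorter and less error-prone.
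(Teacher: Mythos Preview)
Your proof is correct and takes a genuinely different route from the paper's. The paper argues by a direct induction on $n$: it picks any diagonal $\{i,j\}$ with $i<j$, and if $j\neq i+2$ it passes to the induced triangulation of the sub-polygon on the consecutive labels $i,i+1,\ldots,j$, which is strictly smaller and still cyclically labeled, so the inductive hypothesis furnishes a triangle $\{j'-1,j',j'+1\}$ there, and consecutiveness survives in the ambient labeling. Your argument instead invokes the dual-tree/two-ears theorem globally and then makes the key observation that the two ``bad'' ears, the ones tipped at $0$ and at $n$, would both contain the boundary edge $\{0,n\}$ and hence cannot coexist. This is a cleaner explanation of \emph{why} strict consecutiveness (as opposed to consecutiveness modulo $n+1$) is guaranteed: it isolates the single edge that obstructs both wrap-around ears simultaneously. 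The paper's induction, by contrast, is more self-contained (no appeal to the two-ears theorem, even though you prove it in one line) and sidesteps the issue entirely by always passing to a sub-polygon whose labels are an interval in $\{0,\ldots,n\}$, so wrap-around never arises. Your suggested fallback of peeling off the ear on the edge $\{0,1\}$ is closer in spirit to the paper's proof, though the paper peels along an arbitrary diagonal rather than a specific boundary edge.
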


\begin{proof}
The vertices of $P_2$ are labeled by $0,1,2$ so we can take $j = 1$. There are two triangulations of $P_3$. The triangulation with a diagonal from $0$ to $2$ contains the triangle labeled by $0$,$1$,$2$. The triangulation with a diagonal from $1$ to $3$ contains the triangle labeled by $1$,$2$,$3$. We proceed by induction. Suppose the result is true for $n = k$. Consider a triangulation of the regular $(k + 1)$-gon.
Let $i < j$ be the labels for the vertices of a diagonal of the triangulation. If $j = i + 2$ then we are done. If $j \neq i + 2$ 
then consider the induced triangulation on the polygon with vertices $i, i +1, \ldots j$. By inductive assumption, this triangulation contains
a triangle of the desired form which is then in the larger triangulation. The result follows by induction.
\end{proof}

Note in the previous lemma that $j - 1, j , j + 1$ are strictly consecutive, not just consecutive modulo $n$. So what is the difference in significance between triangles with consecutive vertices and triangles with vertices consecutive modulo $n$? Consider the following example. The triangulation

\definecolor{zzttqq}{rgb}{0.6,0.2,0}
\definecolor{uuuuuu}{rgb}{0.26666666666666666,0.26666666666666666,0.26666666666666666}
\begin{tikzpicture}[line cap=round,line join=round,>=triangle 45,x=.3cm,y=.3cm]

\fill[line width=2pt,color=zzttqq,fill=zzttqq,fill opacity=0.10000000149011612] (-2.9155046438166723,-0.0887557150591822) -- (1.6217053587551462,-0.12820971508154583) -- (3.0613033404130148,4.174741467160951) -- (-0.5861861793584916,6.873565549740722) -- (-4.280056657844184,4.238579380189277) -- cycle;
\draw [line width=2pt,color=zzttqq] (-2.9155046438166723,-0.0887557150591822)-- (1.6217053587551462,-0.12820971508154583);
\draw [line width=2pt,color=zzttqq] (1.6217053587551462,-0.12820971508154583)-- (3.0613033404130148,4.174741467160951);
\draw [line width=2pt,color=zzttqq] (3.0613033404130148,4.174741467160951)-- (-0.5861861793584916,6.873565549740722);
\draw [line width=2pt,color=zzttqq] (-0.5861861793584916,6.873565549740722)-- (-4.280056657844184,4.238579380189277);
\draw [line width=2pt,color=zzttqq] (-4.280056657844184,4.238579380189277)-- (-2.9155046438166723,-0.0887557150591822);
\draw [line width=2pt] (-4.280056657844184,4.238579380189277)-- (3.0613033404130148,4.174741467160951);
\draw [line width=2pt] (-2.9155046438166723,-0.0887557150591822)-- (3.0613033404130148,4.174741467160951);
\draw (2.1740613590682396,-0.6016577153499093) node[anchor=north west] {0};
\draw (3.988945360096967,4.764086287691543) node[anchor=north west] {1};
\draw (-0.7060806425643064,8.354400289726632) node[anchor=north west] {2};
\draw (-5.70110664522558,4.764086287691543) node[anchor=north west] {3};
\draw (-3.9018546443757613,-0.20711771512627306) node[anchor=north west] {4};
\begin{scriptsize}
\draw [fill=uuuuuu] (-2.9155046438166723,-0.0887557150591822) circle (2.5pt);
\draw [fill=uuuuuu] (1.6217053587551462,-0.12820971508154583) circle (2.5pt);
\draw [fill=uuuuuu] (3.0613033404130148,4.174741467160951) circle (2.5pt);
\draw [fill=uuuuuu] (-0.5861861793584916,6.873565549740722) circle (2.5pt);
\draw [fill=uuuuuu] (-4.280056657844184,4.238579380189277) circle (2.5pt);
\end{scriptsize}
\end{tikzpicture}

\noindent can be built up as 

\begin{tikzpicture}[line cap=round,line join=round,>=triangle 45,x=.3cm,y=.3cm]

\draw [line width=2pt] (-13.798111398031187,-10.136960699396816)-- (-9.281757040350023,-10.081203238190877);
\draw [line width=2pt] (-7.832063048995575,-5.787878725333492)-- (-9.281757040350023,-10.081203238190877);
\draw [line width=2pt] (-13.798111398031187,-10.136960699396816)-- (-7.832063048995575,-5.787878725333492);
\draw [line width=2pt] (-4.207828070609456,-5.899393647745372)-- (-2.8138915404609492,-10.192718160602757);
\draw [line width=2pt] (-2.8138915404609492,-10.192718160602757)-- (1.5351904336023938,-10.248475621808698);
\draw [line width=2pt] (1.5351904336023938,-10.248475621808698)-- (3.0406418861627817,-5.955151108951312);
\draw [line width=2pt] (-4.207828070609456,-5.899393647745372)-- (3.0406418861627817,-5.955151108951312);
\draw [line width=2pt] (-2.8138915404609492,-10.192718160602757)-- (3.0406418861627817,-5.955151108951312);
\draw [line width=2pt] (5.438212718018215,-6.010908570157252)-- (6.776391786960781,-10.248475621808698);
\draw [line width=2pt] (9.229720080022155,-3.446065354684009)-- (5.438212718018215,-6.010908570157252);
\draw [line width=2pt] (9.229720080022155,-3.446065354684009)-- (12.742440135996393,-6.066666031363193);
\draw [line width=2pt] (6.776391786960781,-10.248475621808698)-- (11.404261067053826,-10.192718160602757);
\draw [line width=2pt] (12.742440135996393,-6.066666031363193)-- (11.404261067053826,-10.192718160602757);
\draw (-8.835697350702501,-10.750292772662158) node[anchor=north west] {0};
\draw (-7.162973514524292,-3.94788250553747) node[anchor=north west] {1};
\draw (-14.801745699738113,-10.69453531145622) node[anchor=north west] {2};
\draw (2.315794890485558,-10.86180769507404) node[anchor=north west] {0};
\draw (3.486701575810304,-4.17091235036123) node[anchor=north west] {1};
\draw (-5.044189988698561,-4.282427272773111) node[anchor=north west] {2};
\draw (-3.761768380961934,-10.527262927838398) node[anchor=north west] {3};
\draw [line width=2pt] (5.438212718018215,-6.010908570157252)-- (12.742440135996393,-6.066666031363193);
\draw [line width=2pt] (6.776391786960781,-10.248475621808698)-- (12.742440135996393,-6.066666031363193);
\draw (11.906078217907288,-10.638777850250278) node[anchor=north west] {0};
\draw (13.467287131673617,-4.951516807244391) node[anchor=north west] {1};
\draw (9.062447696404334,-1.494554212476107) node[anchor=north west] {2};
\draw (4.880638105958812,-4.22666981156717) node[anchor=north west] {3};
\draw (5.940029868871677,-10.415748005426519) node[anchor=north west] {4};
\begin{scriptsize}
\draw [fill=uuuuuu] (-13.798111398031187,-10.136960699396816) circle (2.5pt);
\draw [fill=uuuuuu] (5.438212718018215,-6.010908570157252) circle (2.5pt);
\draw [fill=uuuuuu] (-9.281757040350023,-10.081203238190877) circle (2.5pt);
\draw [fill=uuuuuu] (-7.832063048995575,-5.787878725333492) circle (2.5pt);
\draw [fill=uuuuuu] (-4.207828070609456,-5.899393647745372) circle (2.5pt);
\draw [fill=uuuuuu] (-2.8138915404609492,-10.192718160602757) circle (2.5pt);
\draw [fill=uuuuuu] (1.5351904336023938,-10.248475621808698) circle (2.5pt);
\draw [fill=uuuuuu] (3.0406418861627817,-5.955151108951312) circle (2.5pt);
\draw [fill=uuuuuu] (6.776391786960781,-10.248475621808698) circle (2.5pt);
\draw [fill=uuuuuu] (11.404261067053826,-10.192718160602757) circle (2.5pt);
\draw [fill=uuuuuu] (12.742440135996393,-6.066666031363193) circle (2.5pt);
\draw [fill=uuuuuu] (9.229720080022155,-3.446065354684009) circle (2.5pt);
\end{scriptsize}
\end{tikzpicture}

Notice that at each step the last triangle added had its vertices consecutive modulo $n$ and that the last triangle added always has a vertex not connected to any diagonals. 

\begin{lemma}\label{ConsecTri}
Viewing the edges and vertices of a triangulation of a $P_n$ as a graph, every vertex has valency at least $2$. The vertices of valency $2$ are 
exactly those $j$ for which $j - 1, j, j + 1$ mod $n$ is a triangle in the triangulation.
\end{lemma}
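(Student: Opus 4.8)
The plan is to study, for a fixed vertex $j$, the edges of the triangulation incident to $j$ together with the triangles they bound (the ``fan'' at $j$). The lower bound is immediate: each vertex $j$ of $P_n$ is an endpoint of the two boundary edges $\{j-1,j\}$ and $\{j,j+1\}$ (indices read cyclically), and since the subdivision covers $P_n$, each such boundary edge is a side of one of its triangles and hence an edge of the graph. Thus $\deg(j)\geq 2$, which gives the first claim.

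For the characterization the key step is a counting lemma: the number of triangles of the triangulation having $j$ as a vertex equals $\deg(j)-1$. To prove it, list the edges at $j$ in angular order $e_0,e_1,\ldots,e_{d-1}$, where $d=\deg(j)$. Because $P_n$ is convex, its interior angle at $j$ is bounded by the two boundary edges, so every diagonal emanating from $j$ lies strictly between them; hence $e_0$ and $e_{d-1}$ are precisely the two boundary edges and all other edges at $j$ fall in between. For each consecutive pair $e_i,e_{i+1}$, the triangle of the subdivision meeting a short arc about $j$ inside that angular sector has $j$ as a vertex, and since no edge at $j$ lies strictly between $e_i$ and $e_{i+1}$ this triangle has $e_i$ and $e_{i+1}$ as its two sides at $j$; it therefore fills the entire sector. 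This sets up a bijection between the $d-1$ sectors and the triangles incident to $j$.

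The two directions of the characterization now follow. If $\deg(j)=2$ there is exactly one triangle at $j$, whose two sides at $j$ must be the boundary edges $\{j-1,j\}$ and $\{j,j+1\}$; its remaining side is then $\{j-1,j+1\}$, so the triangle is $\{j-1,j,j+1\}$. Conversely, if $\{j-1,j,j+1\}$ is a triangle of the triangulation, its two sides at $j$ are the extreme edges $e_0$ and $e_{d-1}$; but every triangle at $j$ occupies a single sector bounded by angularly \emph{consecutive} edges, so $e_0$ and $e_{d-1}$ must be consecutive, forcing $d-1=1$ and hence $\deg(j)=2$.

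The main obstacle is the counting lemma, and within it the geometric input: one must use convexity of $P_n$ to know the boundary edges are angularly extreme, and the defining property that distinct members of a subdivision meet only along shared edges to know that consecutive edges at $j$ bound exactly one triangle. Once this fan decomposition is in hand the rest is bookkeeping. As an alternative one could argue by induction on $n$ using Lemma \ref{StrictConsecTri} and the fact that excising an ear $\{j-1,j,j+1\}$ leaves a triangulation of $P_{n-1}$, but the fan argument is the most direct.
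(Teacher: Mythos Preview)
Your proof is correct and in fact more direct than the paper's. The paper argues inductively: every triangulation of $P_n$ can be built by successively attaching triangles along edges (an ear decomposition), and each attachment adds one new vertex of degree $2$---the middle vertex of a triangle with cyclically consecutive labels---while raising the degrees of two old vertices by one; tracking degrees through this process gives both claims. Your fan argument instead works at a single fixed vertex and proves the sharp counting identity that the number of triangles incident to $j$ equals $\deg(j)-1$, from which the characterization drops out immediately. The inductive route you sketch at the end as an ``alternative'' is essentially the paper's proof; your primary argument avoids the need to invoke (or justify) the ear-decomposition fact and gives a cleaner local reason for the equivalence.
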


\begin{proof}
Every triangulation can be constructed by attaching triangles to each other along edges like the example above. Starting with a triangle with vertices cyclically labeled, we add a triangle glued along one of the edges of the original triangle. Then we relabel the vertices keeping $0$ at the same place. Now we add another triangle and relabeling while keeping 0 at the same place and repeat. The construction of the triangulation of the pentagon above gives an example. At the end of each relabeling the last triangle added has become one with vertices labeled $j - 1, j, j + 1$ mod $k$ for some $j$ and $k$. The process of attaching a triangle as above increases the degree of two of the vertices by $1$ and adds a vertex of degree $2$; namely, the middle vertex in a triangle with consecutive vertices modulo $n$. 
\end{proof}

\begin{lemma}
When $\mathcal{T}$ is a triangulation containing the triangle $T_0$ which we identify with the label set of its vertices $\{j-1,j,j+1\}$, the natural morphisms of the form $\lim_\mathcal{T}\text{iso}(s_\bullet \mathcal{C}) \to \lim_{\mathcal{T}, T_0}\text{iso}(s_\bullet \mathcal{C})$ are surjective. Here, $\lim_{\mathcal{T}, T_0}\text{iso}$ is abbreviated notation for $\lim_{\mathcal{T} - \{T_0,\{j-1,j+1\}\}}$.
\end{lemma}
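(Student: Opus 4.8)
The plan is to describe both limits concretely as sets of compatible families and to check that enlarging the index category from $\mathcal{T}-\{T_0,\{j-1,j+1\}\}$ to $\mathcal{T}$ imposes exactly one new constraint, which is always solvable. Recall that the index category of $\lim_{\mathcal{T}}\text{iso}(s_\bullet\mathcal{C})$ has as objects the triangles of $\mathcal{T}$ together with the diagonals of $\mathcal{T}$, its only non-identity morphisms being the inclusion of each diagonal into the two triangles sharing it (the boundary edges of $P_n$ do not appear, as in the sample category $\{1,2,3\}\leftarrow\{1,3\}\to\{1,3,4\}\leftarrow\{1,4\}\to\{0,1,4\}$). Since these are limits of sets, an element of $\lim_{\mathcal{T}}\text{iso}(s_\bullet\mathcal{C})$ is the same as a family $(x_T)$ indexed by the triangles $T$ of $\mathcal{T}$, with $x_T\in\text{iso}(s_T\mathcal{C})$, such that $x_T$ and $x_{T'}$ restrict to the same class in $\text{iso}(s_d\mathcal{C})$ whenever $T\cap T'$ is a diagonal $d$ (the diagonal-indexed entries are then forced to be these common restrictions). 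Deleting $T_0$ and the diagonal $\{j-1,j+1\}$ removes exactly one maximal object $T_0$ and the object $\{j-1,j+1\}$ lying just below it, and the remaining index category is precisely the one for the induced triangulation of the polygon obtained by erasing the vertex $j$.

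First I would pin down the lifting problem. By Lemma \ref{ConsecTri} the vertex $j$ has valency $2$, so the two edges $\{j-1,j\}$ and $\{j,j+1\}$ of $T_0$ are boundary edges of $P_n$ and hence are not objects of the index category; thus $\{j-1,j+1\}$ is the \emph{unique} diagonal of $T_0$. Consequently, starting from a compatible family $(x_T)_{T\neq T_0}$ for the smaller index category, the sole additional condition imposed by the object $T_0$ is that $x_{T_0}$ restrict on the diagonal $\{j-1,j+1\}$ to the same class as $x_{T_1}$, where $T_1$ is the unique other triangle of $\mathcal{T}$ containing $\{j-1,j+1\}$. (If $n=2$ there is no such diagonal or $T_1$: the target is the empty limit, i.e.\ a point, and surjectivity is automatic because $\text{iso}(s_2\mathcal{C})$ is nonempty.) Relabelling $\{j-1,j,j+1\}\cong\{0,1,2\}$, an object of $S_2\mathcal{C}$ is a cofibration sequence $A_{0,1}\rightarrowtail A_{0,2}\twoheadrightarrow A_{1,2}$, and restriction to the diagonal is the face map $d_1$ recording the middle term $A_{0,2}$. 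So surjectivity of the map of limits reduces to the statement that $d_1\colon\text{iso}(s_2\mathcal{C})\to\text{iso}(s_1\mathcal{C})$ is surjective.

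This last point is immediate from the cofibration axioms: for a representative $B$ of the target class, the sequence $0\rightarrowtail B\twoheadrightarrow B$ (that is, $A_{0,1}=0$ and $A_{0,2}=A_{1,2}=B$) is a genuine object of $S_2\mathcal{C}$, since $0\to B$ is always a cofibration with cokernel $B$, and it restricts to $B$ on the diagonal. Setting $x_{T_0}$ equal to its class and leaving every other entry of the family unchanged produces the required preimage, so the map is surjective. I expect the only delicate point to be the combinatorial bookkeeping of the index category — verifying that $T_0$ contributes exactly the single diagonal $\{j-1,j+1\}$ and that no compatibility condition beyond the one isolated above is introduced — whereas the lifting itself is a one-line consequence of the axioms.
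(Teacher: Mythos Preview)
Your proposal is correct and follows essentially the same approach as the paper: both describe the limits as tuples of isomorphism classes with edge-compatibility, observe that removing $T_0$ and $\{j-1,j+1\}$ leaves exactly one compatibility condition to restore (on the diagonal $\{j-1,j+1\}$ shared with the unique neighboring triangle), and lift by taking $x_{T_0}=[0\rightarrowtail B\twoheadrightarrow B]$ where $B$ is the object on that diagonal. Your version is slightly more explicit about why the other two edges of $T_0$ impose no constraint and about the $n=2$ degenerate case, but the argument is the same.
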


\begin{proof}
Suppose the triangles of $\mathcal{T}$ are $T_1, T_2, \ldots, T_{n - 2},$ and $ T_0:= \{j - 1,j, j+1\}$ where we identify each triangle with the set of lables on its vertices. Then $\lim_\mathcal{T}\text{iso}(s_\bullet \mathcal{C})$ may be represented by the set whose elements are tuples of isomorphism classes $([A_1], [A_2], \ldots, [A_{n - 2}], [A_0])$ where 
$A_i \in s_{T_i}\mathcal{C}$ for all $i$, and if $T_i \cap T_k = \{\ell, m\}$, then the images of $[A_i]$ and $[A_k]$ in $\text{iso}(s_{\{\ell,m\}}\mathcal{C})$ are the same. 
Similarly, $\lim_{\mathcal{T}, T_0} \text{iso}(s_\bullet \mathcal{C})$ may be represented by the set of tuples of isomorphism classes $([A_1], [A_2], \ldots, [A_{n - 2}])$ where 
$A_i \in s_{T_i}\mathcal{C}$ for all $i$, and if $T_i \cap T_k = \{\ell, m\}$, then the images of $[A_i]$ and $[A_k]$ in $\text{iso}(s_{\{\ell,m\}})$ are the same. Consider a generic element $([A_1], [A_2], \ldots, [A_{n - 2}])$ of 
$\lim_{\mathcal{T}, T_0} \text{iso}(s_\bullet \mathcal{C})$. There is a unique $k$ and $\alpha$ such that 
$\{k,j-1,j+1\} = T_\alpha$ is the only other triangle of $\mathcal{T}$ that shares an edge with $\{j - 1, j, j+1\}$. Assume $k < j - 1$. The other case ($k > j + 1$) is similar. 
A preimage of this generic element is given by 
$([A_1], [A_2],\ldots, [A_{n-2}], [0 \rightarrowtail (A_\alpha)_{j - 1, j + 1} \twoheadrightarrow (A_\alpha)_{j-1,j+1}])$. 
\end{proof}

\begin{proof}[Proof of Proposition 6.5]
We proceed by induction on the number of sides of the polygon being
subdivided. For $n =3$ the result follows from Proposition 6.9. Let $\mathcal{C}$ be the subcategory of $\mathcal{R}_f$ from the proof of Proposition 6.9. Suppose the result is true for $n = k$. Let
 $\mathcal{T}$ be a triangulation of the regular $(k + 1)$-gon with cyclically labeled vertices where some triangle in the triangulation does not contain the $0$ vertex. We view the vertices and edges of the triangulation as a graph. There are two cases: either the degree 
of the vertex $0$ is $2$ or it is greater than $2$. 

Suppose the degree of the vertex $0$ is $2$. Then by Lemma \ref{ConsecTri}, the triangle given by $0,1,k + 1$ is in $\mathcal{T}$. The induced triangulation on the polygon with
vertices labeled from $1$ to $k + 1$ contains a triangle with vertices of the form $j-1,j, j + 1$ by Lemma \ref{StrictConsecTri}.
We have a commutative diagram 

\[\begin{tikzcd} [sep = small]
	{\text{iso}(s_{k + 1}\mathcal{C})} && {\text{lim}_\mathcal{T} \text{iso}(s_\bullet\mathcal{C})} \\
	\\
	{\text{iso}(s_{\{0,1,\ldots, j-1, j+1, \ldots k + 1\}}\mathcal{C})} && {\text{lim}_{\mathcal{T} - \{j,j-1,j+1\} - \{j,j+1\}} \text{iso}(s_\bullet \mathcal{C})}
	\arrow[from=1-1, to=1-3]
	\arrow[from=1-1, to=3-1]
	\arrow[from=3-1, to=3-3]
	\arrow[from=1-3, to=3-3]
\end{tikzcd}\] where by $\{j,j-1,j+1\}$ we mean the triangle with those vertices. The top and bottom maps 
are $2$-Segal maps, the left vertical map is the appropriate face map of $\text{iso}(s_\bullet \mathcal{C})$ which is surjective, and the right vertical 
map is projection. The right vertical map is surjective by Lemma 6.13. The inductive hypothesis gives that the bottom horizontal map is not surjective so the top horizontal map cannot be surjective. 

Now suppose the degree of the vertex labeled $0$ is greater than $2$. Then there is a $j \neq 1,k + 1$ such that the edge from $0$ to $j$ is a diagonal of the triangulation. This diagonal splits $P_n$ into two cyclically labeled triangulated polygons. One of these two triangulated polygons must now contain a triangle that does not contain the vertex labeled $0$. Proceeding like above we can reduce our attention to the $2$-Segal map corresponding to the triangulated sub-polygon with a triangle not containing $0$. Look at the degree of the vertex $0$; if it is not $2$ repeat this procedure until the degree of the vertex $0$ is $2$. Then proceed as in the first case. 
\end{proof}

We have shown that for all categories with cofibrations $\mathcal{C}$, $\text{iso}(s_\bullet \mathcal{C})$ is left $2$-Segal but need not be fully $2$-Segal.

\section{2-Segality of topological versions of the $S_\bullet$-construction} \label{C6}
\thispagestyle{myheadings}

Let $\mathcal{C}$ be a category with cofibrations. The goals of this chapter are threefold. First, we show that the left $2$-Segal maps for $|iS_\bullet \mathcal{C}|$ are weak equivalences. Second, we show these are the only $2$-Segal maps, corresponding to triangulations, which are necessarily weak equivalences. The proofs given are more categorical versions of the proofs given for $\text{iso}(s_\bullet \mathcal{C})$. Finally, we give a partial result for $|wS_\bullet \mathcal{C}|$. 

\begin{proposition}
Let $\mathcal{C}$ be a category with cofibrations and $\mathcal{T}$ be the triangulation of $P_n$ where each triangle of $\mathcal{T}$ contains the vertex $0$. Then $f_\mathcal{T}$ is a weak equivalence.
\end{proposition}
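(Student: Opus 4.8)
The plan is to mirror the strategy of Section~6, replacing bijections of sets of isomorphism classes by equivalences of groupoids and then passing to geometric realizations. Because every category $iS_k\mathcal{C}$ is a groupoid and homotopy pullbacks of spaces preserve weak equivalences, I would first reduce, exactly as in Lemma~6.1 but one homotopy pullback at a time, to showing that for each $n \geq 3$ and $0 \leq j \leq n$ the basic left $2$-Segal map
\[
|iS_n\mathcal{C}| \longrightarrow |iS_{\{0,j,\ldots,n\}}\mathcal{C}| \times^h_{|iS_{\{0,j\}}\mathcal{C}|} |iS_{\{0,1,\ldots,j\}}\mathcal{C}|
\]
is a weak equivalence; peeling off the triangles of the fan triangulation $\mathcal{T}$ one at a time and composing the resulting equivalences then recovers $f_\mathcal{T}$.

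Next, since the cospan $iS_{\{0,j,\ldots,n\}}\mathcal{C} \to iS_{\{0,j\}}\mathcal{C} \leftarrow iS_{\{0,1,\ldots,j\}}\mathcal{C}$ consists of groupoids, Proposition~5.6 identifies the target above, up to weak equivalence, with the realization of the $2$-fiber product. The basic left $2$-Segal map then factors as $|\Phi|$ followed by the comparison map of Proposition~5.6, where
\[
\Phi \colon iS_n\mathcal{C} \longrightarrow iS_{\{0,j,\ldots,n\}}\mathcal{C} \times^{(2)}_{iS_{\{0,j\}}\mathcal{C}} iS_{\{0,1,\ldots,j\}}\mathcal{C}
\]
sends an object $A$ to its two restrictions glued by the identity on $A_{0,j}$. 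Since an equivalence of categories realizes to a homotopy equivalence, it suffices to prove that $\Phi$ is an equivalence of categories.

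To show $\Phi$ is an equivalence I would verify essential surjectivity and full faithfulness using the explicit description of $2$-fiber products from Example~5.5. An object of the target is a triple $(B,C,y)$ with $B \in iS_{\{0,j,\ldots,n\}}\mathcal{C}$, $C \in iS_{\{0,1,\ldots,j\}}\mathcal{C}$, and an isomorphism $y \colon C_{0,j} \xrightarrow{\cong} B_{0,j}$ of their common restriction to $\{0,j\}$. For essential surjectivity I would splice the top row $C_{0,1} \rightarrowtail \cdots \rightarrowtail C_{0,j}$ of $C$ onto the top row $B_{0,j} \rightarrowtail \cdots \rightarrowtail B_{0,n}$ of $B$ along $y$ to obtain a chain of $n-1$ cofibrations, and then apply the $\mu_n$-construction of Lemma~6.3 to fill it in, by chosen pushouts, to a genuine object $A \in iS_n\mathcal{C}$; the universal properties of those pushouts supply the comparison isomorphisms exhibiting $\Phi(A) \cong (B,C,y)$. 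For full faithfulness I would use the fact underlying Lemma~6.3 that every object of $S_n\mathcal{C}$ is determined up to canonical isomorphism by its top row of cofibrations via pushouts, so that an isomorphism of the full diagram is the same datum as an isomorphism of top rows; a morphism of $\Phi(A)$ to $\Phi(A')$ in the $2$-fiber product is precisely a compatible pair of isomorphisms of the two restrictions agreeing over $\{0,j\}$, which glue to a unique isomorphism of top rows and hence to a unique isomorphism $A \cong A'$.

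I expect the essential-surjectivity step to be the main obstacle. In Section~6 Lemma~6.3 only had to produce an isomorphism class, but here the nontrivial gluing isomorphism $y$ must be transported coherently through the pushout construction so that the comparison square of Example~5.5 commutes on the nose; keeping track of the chosen pushouts, the induced maps on cokernels, and the resulting isomorphisms $\Phi(A) \cong (B,C,y)$ is where the genuine work lies. Once $\Phi$ is shown to be an equivalence, Proposition~5.6 and the reduction of the first paragraph complete the proof that $f_\mathcal{T}$ is a weak equivalence.
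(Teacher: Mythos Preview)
Your proposal is correct and follows the same overall architecture as the paper: reduce via the Lemma~6.1 peeling argument to the basic maps, invoke Proposition~5.6 to replace the homotopy pullback of realizations by the realization of the $2$-fiber product, and then prove that the comparison functor $\Phi$ is an equivalence of groupoids. The only real difference is in this last step. You propose to verify essential surjectivity and full faithfulness of $\Phi$ by hand, and correctly flag the coherence bookkeeping for the gluing isomorphism $y$ as the delicate point. The paper instead sidesteps exactly that bookkeeping by introducing the groupoid $M_n\mathcal{C}$ of chains of $n-1$ cofibrations (Lemma~7.2), for which the equivalence $\mu_n \colon iS_n\mathcal{C} \to M_n\mathcal{C}$ is easy, and then observing that $\Phi_j$ sits in a commutative square with $\mu_n$, $\mu_j \times \mu_{n-j+1}$, and the obvious splitting $\phi_j \colon M_n \to M_j \times^{(2)}_{iS_{\{0,j\}}\mathcal{C}} M_{n-j+1}$. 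Since chains of cofibrations split on the nose at position $j$, $\phi_j$ is trivially an equivalence, and $\Phi_j$ follows by two-out-of-three. Your direct argument works but is longer; the paper's $M_n$ trick absorbs all the pushout coherence into the single statement that $\mu_n$ is an equivalence.
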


\begin{lemma}
Let $M_n\mathcal{C}$ be the groupoid whose objects are chains of $(n - 1)$ cofibrations in $\mathcal{C}$ and whose morphisms are isomorphisms of such chains. The functor $\mu_n \colon iS_n \mathcal{C} \rightarrow M_n\mathcal{C}$ which sends $A$ to
$A_{0,1} \rightarrowtail A_{0,2} \rightarrowtail \cdots \rightarrowtail A_{0,n}$ is an equivalence. 
\end{lemma}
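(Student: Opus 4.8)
The plan is to prove that $\mu_n$ is an equivalence of groupoids by checking that it is essentially surjective and fully faithful; since both $iS_n\mathcal{C}$ and $M_n\mathcal{C}$ are groupoids, this suffices. The structural observation driving every step is that, for an object $A$ of $S_n\mathcal{C}$, the defining cofibration sequences with smallest index $0$, namely $A_{0,j} \rightarrowtail A_{0,k} \twoheadrightarrow A_{j,k}$ for $0 < j < k$, exhibit each entry $A_{j,k}$ as a cokernel of a cofibration appearing in the top row $\mu_n(A)$, and exhibit every structure map of the $\Ar[n]$-diagram as the canonical map induced between two such cokernels. In other words, an object of $S_n\mathcal{C}$ is determined up to canonical isomorphism by its top row, and this is exactly what will let the top row control morphisms as well. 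I will also use repeatedly that the cokernel projection $A_{0,k} \twoheadrightarrow A_{j,k}$ is an epimorphism, which is immediate from the universal property defining the cokernel.

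For essential surjectivity I would reuse the object-level construction from Lemma 6.3: given a chain $B_1 \rightarrowtail \cdots \rightarrowtail B_n$ of cofibrations, setting $A_{0,j} = B_j$, $A_{j,j} = 0$, and $A_{i,j}$ a chosen cokernel of $A_{0,i} \rightarrowtail A_{0,j}$ produces an object $A$ of $S_n\mathcal{C}$ whose top row is literally the given chain. Hence $\mu_n$ is even surjective on objects, so in particular essentially surjective. For faithfulness, recall that a morphism of $iS_n\mathcal{C}$ is a natural isomorphism $f \colon A \to A'$, and $\mu_n(f)$ is the family of top-row components $(f_{0,j})_{1 \le j \le n}$. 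Since $A_{0,k} \twoheadrightarrow A_{j,k}$ is epi, the naturality square of $f$ on the projection $(0,k) \to (j,k)$ forces $f_{j,k}$ to be the unique morphism satisfying $f_{j,k} \circ (A_{0,k} \twoheadrightarrow A_{j,k}) = (A'_{0,k} \twoheadrightarrow A'_{j,k}) \circ f_{0,k}$. Thus every component of $f$ is determined by its top row, and $\mu_n$ is injective on morphism sets.

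For fullness, given an isomorphism of chains $g = (g_j)$, i.e. isomorphisms $g_j \colon A_{0,j} \to A'_{0,j}$ commuting with the top-row cofibrations, I would define a candidate lift $f$ by $f_{0,j} = g_j$, $f_{j,j} = \id_0$, and, for $0 < j < k$, by letting $f_{j,k}$ be the unique morphism induced on cokernels by the commuting square built from $g_j$, $g_k$ and the cofibrations $A_{0,j} \rightarrowtail A_{0,k}$ and $A'_{0,j} \rightarrowtail A'_{0,k}$; existence of $f_{j,k}$ follows because $(A'_{0,k} \twoheadrightarrow A'_{j,k}) \circ g_k$ kills $A_{0,j}$, and uniqueness from the epi projection. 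Two points then remain: that each $f_{j,k}$ is an isomorphism, and that $f$ is a natural transformation of $\Ar[n]$-diagrams. The first follows by running the same construction on $g^{-1}$ and invoking uniqueness to see the two induced maps are mutually inverse. The second reduces to checking compatibility with the two kinds of generating morphisms of $\Ar[n]$, the horizontal cofibrations $A_{j,\ell} \rightarrowtail A_{j,k}$ (for $\ell \le k$) and the vertical projections $A_{i,k} \twoheadrightarrow A_{j,k}$ (for $i \le j$); after precomposing each required square with the appropriate epi projection from the top row, commutativity becomes an instance of uniqueness in the universal property of the relevant cokernel.

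I expect the main obstacle to be precisely this last naturality verification in the fullness step, since a priori one must check a commuting square for every morphism of $\Ar[n]$. The reduction that makes it routine is the structural fact stated at the outset: every entry $A_{i,k}$ with $i>0$ is a cokernel of a top-row cofibration, and every structure map is the canonical map between such cokernels, so each square is forced by the functoriality inherent in the universal property. With essential surjectivity, faithfulness, and fullness established, $\mu_n$ is an equivalence of groupoids.
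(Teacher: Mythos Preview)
Your proof is correct and follows essentially the same approach as the paper. The paper's own proof is simply ``analogous to that of Lemma 6.3,'' i.e.\ build a quasi-inverse by choosing cokernels; your argument spells out this same idea in the equivalent form of checking essential surjectivity (reusing Lemma 6.3) and full faithfulness (using that cokernel projections are epimorphisms to see the top row determines all components of a morphism).
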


The proof is analogous to that of Lemma 6.3.

\begin{proof}[Proof of Proposition 7.1] By Proposition 5.6 it suffices to show that the functor $\Phi_j \colon iS_n \mathcal{C} \rightarrow 
iS_{\{0,1,\ldots, j\}} \mathcal{C} \times^{(2)}_{S_{\{0,j\}}\mathcal{C}} iS_{\{j,j + 1, \ldots, n\}} \mathcal{C}$ is an equivalence. 
There is a commutative diagram 

\[\begin{tikzcd}[ampersand replacement=\&, sep = small]
	{iS_n \mathcal{C}} \&\& {iS_{\{0,1,\ldots, j\}}\mathcal{C} \times^{(2)}_{iS_{\{0,j\}}\mathcal{C}} iS_{\{j, j + 1, \ldots, n\}} \mathcal{C}} \\
	\\
	M_n \&\& M_j \times^{(2)}_{iS_{\{0,j\}}\mathcal{C}} M_{n - j + 1}
	\arrow["{\mu_n}"', from=1-1, to=3-1]
	\arrow["{\Phi_j}", from=1-1, to=1-3]
	\arrow["{\mu_j \times \mu_{n - j + 1}}", from=1-3, to=3-3]
	\arrow["{\phi_j}"', from=3-1, to=3-3]
\end{tikzcd}\] where $\phi_j$ is an equivalence so $\Phi_j$ is an equivalence. 
\end{proof}

Now we turn to the "only" part of the result. 

\begin{lemma}\label{EssentSurj}
A functor between groupoids that is not essentially surjective cannot induce a weak homotopy equivalence between realizations.
\end{lemma}

\begin{proof}
Suppose $F \colon \mathcal{C} \rightarrow \mathcal{D}$ is a functor between groupoids that induces a weak homotopy equivalence $|\mathcal{C}| \rightarrow |\mathcal{D}|$. Then $|F|$ gives a surjection (in fact a bijection) between path components of the $1$-skeleta. Since all edges in the $1$-skeleta come from isomorphisms this cannot be the case unless $F$ is essentially surjective.
\end{proof}

Because of Lemma \ref{EssentSurj}, it suffices to show that the maps from $iS_n\mathcal{C}$ to $2$-fiber products of categories corresponding to $2$-Segal maps are not necessarily essentially surjective.

\begin{lemma}\label{Projection}
 When $\mathcal{T}$ is a triangulation containing the triangle $\{j-1,j,j+1\}$, the natural morphisms of the form $2\lim_\mathcal{T}(iS_\bullet \mathcal{C}) \to 2\lim_{\mathcal{T} - \{j-1,j,j+1\} - \{j-1,j+1\}}(iS_\bullet \mathcal{C})$ are surjective on objects.
\end{lemma}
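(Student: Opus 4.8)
The plan is to follow the proof of Lemma 6.13 --- the $\text{iso}(s_\bullet\mathcal{C})$ version of this statement --- while carrying along the isomorphism data that the projective $2$-limit of Definition 5.1 records but the strict limit does not; the upshot will be that every isomorphism we must supply can be taken to be an identity. First I would unwind the description of objects of $2\lim_\mathcal{T}(iS_\bullet\mathcal{C})$. The indexing category is the poset whose objects are the triangles of $\mathcal{T}$ together with the diagonals of $\mathcal{T}$, with a single nonidentity morphism $T\to e$ for each diagonal $e$ contained in a triangle $T$ (the boundary edges of $P_n$ do not appear, exactly as in the sample diagram $\{1,2,3\}\leftarrow\{1,3\}\to\{1,3,4\}\leftarrow\{1,4\}\to\{0,1,4\}$), and the associated functor sends such a morphism to the restriction functor $iS_T\mathcal{C}\to iS_e\mathcal{C}$. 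Spelling out Definition 5.1, an object is an assignment of $y_T\in iS_T\mathcal{C}$ to each triangle, of $y_e\in iS_e\mathcal{C}$ to each diagonal, and of an isomorphism $\theta_{e\subset T}\colon (y_T)|_e\xrightarrow{\cong} y_e$ to each incidence $e\subset T$; the coherence condition~(2) is vacuous because no two nonidentity morphisms of the indexing poset are composable.

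The geometric input is that $T_0=\{j-1,j,j+1\}$ is an ear: since $j-1,j,j+1$ are consecutive vertices, the edges $\{j-1,j\}$ and $\{j,j+1\}$ are sides of $P_n$ and hence absent from the poset, so $\{j-1,j+1\}$ is the only diagonal of $T_0$, and it is shared with a unique second triangle $T_\alpha$. Consequently the projection $2\lim_\mathcal{T}(iS_\bullet\mathcal{C})\to 2\lim_{\mathcal{T}-\{j-1,j,j+1\}-\{j-1,j+1\}}(iS_\bullet\mathcal{C})$ forgets exactly the data $y_{T_0}$, $y_{\{j-1,j+1\}}$, and the two isomorphisms $\theta_{\{j-1,j+1\}\subset T_0}$ and $\theta_{\{j-1,j+1\}\subset T_\alpha}$, while every other datum (including the remaining incidences of $T_\alpha$) is preserved. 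Given an object of the target, I would build a preimage by setting $y_{\{j-1,j+1\}}:=(y_{T_\alpha})_{j-1,j+1}$ (the value of $y_{T_\alpha}$ on the pair $(j-1,j+1)$, viewed in $iS_{\{j-1,j+1\}}\mathcal{C}\cong\mathcal{C}$), taking $y_{T_0}$ to be the degenerate cofibration sequence $0\rightarrowtail (y_{T_\alpha})_{j-1,j+1}=(y_{T_\alpha})_{j-1,j+1}$, and declaring both new structure isomorphisms to be identities.

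Finally I would check that this is a legitimate object of the source $2$-limit projecting to the given one. The object $y_{T_0}$ lies in $iS_{T_0}\mathcal{C}=iS_2\mathcal{C}$ because $0\rightarrowtail (y_{T_\alpha})_{j-1,j+1}$ is a cofibration with the stated cokernel; its restriction to $\{j-1,j+1\}$ is $(y_{T_\alpha})_{j-1,j+1}=y_{\{j-1,j+1\}}$, and likewise $(y_{T_\alpha})|_{\{j-1,j+1\}}=y_{\{j-1,j+1\}}$, so the two identity isomorphisms are valid; coherence remains vacuous; and deleting the four added pieces of data returns the original object. I expect the only real subtlety --- and the one place the argument genuinely differs from Lemma 6.13 --- to be the bookkeeping that the two boundary edges of the ear $T_0$ impose no compatibility constraints (they are not objects of the indexing poset) and that all required coherence isomorphisms can simultaneously be chosen to be identities; once this is pinned down, surjectivity on objects is immediate.
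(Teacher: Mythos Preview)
Your proposal is correct and follows exactly the approach the paper sketches: it says the argument is the same as Lemma~6.13 except that one must keep track of the isomorphism data in the projective $2$-limit, and you carry this out explicitly by choosing the two new structure isomorphisms to be identities. Your observation that the coherence condition~(2) of Definition~5.1 is vacuous here (because the indexing poset has no composable pair of nonidentity arrows) is the key bookkeeping point the paper leaves implicit.
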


The proof is the same as for Lemma 6.13 with "bijection" replaced by "essentially surjective" except that one must keep track of the isomorphisms that are part of the data of an object in a $2$ limit. We use the fact that a composite of essentially surjective functors is essentially surjective and if $G$ and $F$ are functors and $GF$ is essentially surjective, then so is $G$. 

\begin{proposition}\label{BadSubdivision}
There is a category with cofibrations $\mathcal{D}$ such that the natural map 
$iS_n \mathcal{D} \rightarrow iS_{\{0,1,n\}} \mathcal{D} \times^{(2)}_{iS_{\{1,n\}} \mathcal{D}} iS_{\{1,2,\ldots,n\}}\mathcal{D}$ is not essentially surjective. 
\end{proposition}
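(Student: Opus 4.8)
The plan is to reuse the category with cofibrations $\mathcal{D}=\langle\mathcal{D}_0\rangle\subseteq\mathcal{R}_f$ built in the proof of Proposition 6.9 and to run essentially the same argument, taking care that the ``up to isomorphism'' slack in essential surjectivity cannot be exploited. Write $\Phi$ for the restriction functor appearing in the statement. I would first treat the case $n=3$, where the target is $iS_{\{0,1,3\}}\mathcal{D}\times^{(2)}_{iS_{\{1,3\}}\mathcal{D}}iS_{\{1,2,3\}}\mathcal{D}$. As the test object I take the triple assembled from the bad diagram of Proposition 6.9: let $B\in iS_{\{0,1,3\}}\mathcal{D}$ be the cofibration sequence $S^1\rightarrowtail T^2\twoheadrightarrow T^2/S^1$ and let $C\in iS_{\{1,2,3\}}\mathcal{D}$ be $\hat\Delta\rightarrowtail T^2/S^1\twoheadrightarrow S^2$. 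Both restrict to the object $T^2/S^1$ of $iS_{\{1,3\}}\mathcal{D}$, so $(B,C,\mathrm{id})$ is a genuine object of the $2$-fiber product, and the claim is that it is not in the essential image of $\Phi$.

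The crux is to show that isomorphisms cannot circumvent the obstruction of Proposition 6.9. Suppose $(B,C,\mathrm{id})\cong\Phi(A)$ for some $A\in iS_3\mathcal{D}$. An isomorphism in $iS_k\mathcal{D}$ is a levelwise isomorphism of $\Ar[k]$-diagrams, and an isomorphism of the $2$-fiber product is a compatible pair of such isomorphisms; hence it is componentwise a based cellular homeomorphism and preserves the homeomorphism type of every vertex together with the cofibration and quotient maps. This forces $A_{0,1}\cong S^1$, $A_{0,3}\cong T^2$, $A_{1,2}\cong\hat\Delta$, $A_{2,3}\cong S^2$, and $A_{1,3}\cong T^2/S^1$. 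The missing vertex $A_{0,2}\in\mathcal{D}$ then sits in cofibration sequences $A_{0,1}\rightarrowtail A_{0,2}\twoheadrightarrow A_{1,2}$ and $A_{0,2}\rightarrowtail A_{0,3}\twoheadrightarrow A_{2,3}$. Because $A_{0,2}\rightarrowtail A_{0,3}$ is a cellular inclusion and $A_{0,3}\cong T^2$, the complex $A_{0,2}$ is cellularly homeomorphic to a subcomplex of $T^2$; and the cokernel condition $A_{0,2}/A_{0,1}\cong\hat\Delta$ pins its cell count down to one $0$-cell, three $1$-cells, and one $2$-cell, exactly the count computed in Proposition 6.9. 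Since this cell count and the attaching data are invariants of cellular homeomorphism, the argument of Proposition 6.9 applies unchanged and yields $A_{0,2}\cong P$.

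To finish I would invoke the cell-structure analysis of Proposition 6.9 in its strong form: that argument shows every object of $\mathcal{D}$ has all $2$-cells attached along at most three $1$-cells and then rules out every complex of the form $W\vee P$, so in particular no object of $\mathcal{D}$ is homeomorphic to $P=\ast\vee P$. This contradicts $A_{0,2}\in\mathcal{D}$ with $A_{0,2}\cong P$, so $(B,C,\mathrm{id})$ is not in the essential image and $\Phi$ fails to be essentially surjective. For general $n$ I would pad the test diagram with zeros as in the final display of the proof of Proposition 6.9; the zero cokernels force the inserted vertices to be mutually isomorphic, so any hypothetical preimage again produces a vertex cellularly homeomorphic to $P$, and the same contradiction results.

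The step I expect to be the main obstacle is the middle paragraph, namely verifying that the freedom of choosing a preimage only up to isomorphism is genuinely harmless. Two points must be checked together: that the levelwise isomorphisms of groupoid objects pin down each relevant vertex up to cellular homeomorphism, and that the obstruction of Proposition 6.9 is a homeomorphism invariant, so that it excludes not merely the literal complex $P$ but every complex isomorphic to it. Everything else is a transcription of the discrete argument with ``bijection'' replaced by ``essentially surjective.''
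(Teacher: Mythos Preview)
Your proposal is correct and follows exactly the approach the paper indicates: the paper's own proof is the single line ``The proof is essentially the same as the proof of Proposition 6.9,'' and you have carried this out carefully, in particular making explicit why the up-to-isomorphism slack in the $2$-fiber product cannot rescue essential surjectivity. One could shorten the argument slightly by observing that essential surjectivity of $\Phi$ would force surjectivity of the induced map on isomorphism classes, which is precisely the discrete map shown not to be surjective in Proposition 6.9; but your more hands-on treatment is equivalent and arguably clearer.
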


The proof is essentially the same as the proof of Proposition 6.9. 

\begin{proposition}
The $2$-Segal maps of $|iS_\bullet\mathcal{C}|$ which are not left $2$-Segal are not necessarily weak equivalences. 
\end{proposition}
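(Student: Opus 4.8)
The plan is to run the categorical analogue of the proof of Proposition 6.5, replacing ``bijection'' throughout by ``essentially surjective'' and the ordinary limit by the projective $2$-limit. The reduction step is Lemma~\ref{EssentSurj}: a functor of groupoids that is not essentially surjective cannot realize to a weak homotopy equivalence. Each $iS_I\mathcal{C}$ is a groupoid, so the diagram of vertex-set categories attached to a triangulation $\mathcal{T}$ of $P_n$ lands in groupoids; hence $2\lim_\mathcal{T}(iS_\bullet\mathcal{C})$ is again a groupoid and, by Proposition 5.6, its realization is weakly equivalent to the homotopy-limit target of the $2$-Segal map $f_\mathcal{T}$. Under this identification $f_\mathcal{T}$ is (up to weak equivalence) the realization of the comparison functor $\iota_\mathcal{T}\colon iS_n\mathcal{C}\to 2\lim_\mathcal{T}(iS_\bullet\mathcal{C})$, so by Lemma~\ref{EssentSurj} it suffices to exhibit, for every $\mathcal{T}$ containing a triangle that misses the vertex $0$, a category with cofibrations for which $\iota_\mathcal{T}$ fails to be essentially surjective. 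I would take $\mathcal{D}$ to be the subcategory of $\mathcal{R}_f$ from Proposition~\ref{BadSubdivision}.

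Next I would induct on the number of sides of the subdivided polygon. The base case $n=3$ is exactly Proposition~\ref{BadSubdivision}, whose triangulation $\{0,1,3\},\{1,2,3\}$ has the triangle $\{1,2,3\}$ off the vertex $0$. For the inductive step, fix a triangulation $\mathcal{T}$ of $P_{k+1}$ with some triangle missing $0$, and split according to the valency of $0$ in the graph of $\mathcal{T}$. If $0$ has valency $2$, then by Lemma~\ref{ConsecTri} the triangle $\{0,1,k+1\}$ lies in $\mathcal{T}$, and by Lemma~\ref{StrictConsecTri} the induced triangulation of the sub-polygon on $\{1,\dots,k+1\}$ contains a strictly consecutive triangle $\{j-1,j,j+1\}$ with $j\neq 0,1,k+1$. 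This yields a commuting square
\[\begin{tikzcd}[sep=small]
	{iS_{k+1}\mathcal{C}} && {2\lim_\mathcal{T}(iS_\bullet\mathcal{C})} \\
	\\
	{iS_{\{0,1,\ldots,j-1,j+1,\ldots,k+1\}}\mathcal{C}} && {2\lim_{\mathcal{T}-\{j-1,j,j+1\}-\{j-1,j+1\}}(iS_\bullet\mathcal{C})}
	\arrow[from=1-1, to=1-3]
	\arrow[from=1-1, to=3-1]
	\arrow[from=3-1, to=3-3]
	\arrow[from=1-3, to=3-3]
\end{tikzcd}\]
in which the horizontals are the relevant comparison functors, the left vertical is the face functor deleting the vertex $j$, and the right vertical is the projection, which is essentially surjective by Lemma~\ref{Projection}. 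Were the top essentially surjective, composing with the right projection would make the composite (equivalently, the bottom followed by the left) essentially surjective; since a composite $GF$ being essentially surjective forces $G$ essentially surjective, the bottom comparison functor would be essentially surjective. As the deleted vertex is far from $0$, the restricted triangulation of $P_k$ still carries a triangle off $0$, so the inductive hypothesis says the bottom is \emph{not} essentially surjective, a contradiction. Hence the top is not essentially surjective.

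If instead $0$ has valency greater than $2$, there is a diagonal $0\to j$ with $j\neq 1,k+1$; it cuts $P_{k+1}$ into two cyclically labeled triangulated sub-polygons, at least one of which still carries a triangle avoiding $0$. I would restrict attention to that sub-polygon along the face functor onto its vertex set together with the surjective-on-objects projection of the corresponding $2$-limits, obtaining a commuting square as above in which the right vertical is essentially surjective; this passes the problem to a strictly smaller polygon and lowers the valency of $0$, so iterating lands us in the valency-$2$ case already treated. The step I expect to be the main obstacle is precisely the passage between the genuine homotopy-limit target of $f_\mathcal{T}$ and the strict $2\lim_\mathcal{T}$: one must verify that failure of essential surjectivity of $\iota_\mathcal{T}$ really obstructs $f_\mathcal{T}$ from being a weak equivalence, which is where the groupoid hypothesis and Proposition 5.6 are essential, and one must carefully track the coherence isomorphisms that are part of an object of a projective $2$-limit when checking that the projections in Lemmas~\ref{Projection} and in the valency-${>}2$ reduction remain essentially surjective.
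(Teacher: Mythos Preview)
Your proposal is correct and follows essentially the same approach as the paper: reduce via Lemma~\ref{EssentSurj} and Proposition~5.6 to showing that the comparison functor into the projective $2$-limit fails to be essentially surjective, then rerun the induction of Proposition~6.5 with ordinary limits replaced by projective $2$-limits, using Proposition~\ref{BadSubdivision} as the base case and Lemma~\ref{Projection} for the surjectivity of the projection. The paper's own proof says exactly this in two sentences; you have simply spelled out the details it leaves implicit, including the observation that only the implication ``$GF$ essentially surjective $\Rightarrow$ $G$ essentially surjective'' is needed in the square argument.
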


\begin{proof}
By Lemma \ref{EssentSurj} and Proposition 5.6, it suffices to show that the corresponding maps into the projective $2$-limits of categories are not necessarily essentially surjective. The proof is the same as the proof of Proposition 6.5, but with limits replaced by projective $2$-limits. 

\end{proof}

Now having the weak equivalences all be isomorphisms is an unusually special situation. Another unusually special situation is having all morphisms be weak equivalences. We have seen that $|S_\bullet \mathcal{C}|$ is $2$-Segal and $|iS_\bullet\mathcal{C}|$ is left $2$-Segal and may be $2$-Segal. What can we say in the middle cases of arbitrary Waldhausen categories? So far we have not been able to say anything, but if $\mathcal{C}$ is a nice Waldhausen category we can say the following. 

\begin{proposition}
If $\mathcal{C}$ is a saturated Waldhausen category satisfying the mapping cylinder axiom, then a $2$-Segal map of $|wS_\bullet \mathcal{C}|$ is a weak equivalence if and only if the corresponding $2$-Segal map for $|wcofM_\bullet \mathcal{C}|$ is a weak equivalence. 
\end{proposition}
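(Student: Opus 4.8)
The plan is to realize the comparison as a levelwise weak equivalence of simplicial spaces between $|wS_\bullet\mathcal{C}|$ and $|wcofM_\bullet\mathcal{C}|$, and then to transport it through the homotopy limits that define the $2$-Segal maps by a two-out-of-three argument. The standing hypotheses (saturated, mapping cylinder axiom) are used only to feed Proposition 3.15; everything after that is formal. I would use the acyclic-cofibration simplicial category $\co wS_\bullet\mathcal{C}$ as an intermediary, proving $|wcofM_\bullet\mathcal{C}| \simeq |\co wS_\bullet\mathcal{C}| \to |wS_\bullet\mathcal{C}|$ degreewise.

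First I would pass from $wS_\bullet\mathcal{C}$ to $\co wS_\bullet\mathcal{C}$. For each fixed $n$, the proposition that $S_n\mathcal{C}$ satisfies the hypotheses of Proposition 3.15 lets me apply Proposition 3.15 to the saturated Waldhausen category $S_n\mathcal{C}$, so the inclusion $\iota_n \colon \co wS_n\mathcal{C} \to wS_n\mathcal{C}$ realizes to a homotopy equivalence $|\co wS_n\mathcal{C}| \to |wS_n\mathcal{C}|$. Since the simplicial structure maps of $S_\bullet\mathcal{C}$ are exact, they preserve cofibrations and weak equivalences, so the $\iota_n$ assemble into a map of simplicial spaces $|\co wS_\bullet\mathcal{C}| \to |wS_\bullet\mathcal{C}|$ that is a weak equivalence in each simplicial degree.

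Next I would identify $\co wS_\bullet\mathcal{C}$ with $wcofM_\bullet\mathcal{C}$ by the ``top row'' functor of Lemma 7.2, now adapted to acyclic cofibrations: $\mu_n$ sends a diagram $A$ to the chain $A_{0,1} \rightarrowtail \cdots \rightarrowtail A_{0,n}$. It is essentially surjective because any chain of cofibrations extends to an object of $S_n\mathcal{C}$ by iterated pushouts, exactly as in Lemma 6.3. It is faithful because each lower entry $A_{j,k}$ with $j>0$ is the cokernel $A_{0,k}\cup_{A_{0,j}} 0$, so a morphism of diagrams is determined by its top row. Fullness is the one place the Waldhausen gluing axiom is needed: given an acyclic cofibration of top rows, the induced maps on the cokernels are cofibrations by the definition of cofibrations in $S_n\mathcal{C}$, and weak equivalences by applying the pushout-along-cofibration axiom to $0 \leftarrow A_{0,j} \rightarrowtail A_{0,k}$, hence acyclic cofibrations; so the top-row morphism lifts to an acyclic cofibration of diagrams. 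These equivalences are natural in $n$, giving a degreewise weak equivalence $|wcofM_\bullet\mathcal{C}| \simeq |\co wS_\bullet\mathcal{C}|$.

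Composing the two steps yields a map of simplicial spaces $g\colon |wcofM_\bullet\mathcal{C}| \to |wS_\bullet\mathcal{C}|$ that is a weak equivalence in every degree. Fixing a polygonal subdivision $\mathcal{P}$ of $P_n$ and taking the homotopy limit over the poset of vertex sets of $\mathcal{P}$, the naturality of the $2$-Segal map in the simplicial object gives a commutative square whose horizontal arrows are the two maps $f_\mathcal{P}$, whose left vertical arrow is $g_n$, and whose right vertical arrow is $\text{holim}_\mathcal{P}(g)$. The left vertical is a weak equivalence by degree $n$, and the right vertical is a weak equivalence because $g$ restricts to a weak equivalence at each vertex set (each sits in some degree) and homotopy limits are invariant under objectwise weak equivalence. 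Two-out-of-three then gives that $f_\mathcal{P}$ for $|wcofM_\bullet\mathcal{C}|$ is a weak equivalence if and only if $f_\mathcal{P}$ for $|wS_\bullet\mathcal{C}|$ is, which is the claim. The main obstacle is the fullness step of the top-row identification: one must verify that a levelwise acyclic cofibration of chains really induces an acyclic cofibration of the full $S_\bullet$-diagrams, which is precisely where the gluing axiom is indispensable and where the cofibration structure on $S_n\mathcal{C}$ must be tracked carefully; the remaining inputs are all available.
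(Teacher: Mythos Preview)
Your proposal is correct and follows essentially the same approach as the paper: reduce $|wS_\bullet\mathcal{C}|$ to $|\co wS_\bullet\mathcal{C}|$ via Proposition 3.15 applied to each $S_n\mathcal{C}$, then identify $\co wS_\bullet\mathcal{C}$ with $wcofM_\bullet\mathcal{C}$ via the top-row equivalence analogous to Lemma 7.2, and conclude by invariance of homotopy limits under levelwise weak equivalence plus two-out-of-three. The paper's proof is terser (in particular it elides the fullness check you spell out and simply invokes ``since in $\Top$ all spaces are fibrant'' for the final step), but the structure is identical.
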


\begin{proof}
By Proposition 3.15, the maps $wcofS_n \mathcal{C} \to S_n \mathcal{C}$ are homotopy equivalences. Analogously to Lemma 7.2, the maps $wcofM_n\mathcal{C} \to wcofS_n\mathcal{C}$ are homotopy equivalences. All of these maps commute with face and degeneracy maps. Since in $\Top$ all spaces are fibrant, the result follows. 
\end{proof}

\section{2-Segality of categorical versions of the $S_\bullet$-construction} \label{C7}
\thispagestyle{myheadings}

We have seen that $|S_\bullet \mathcal{C}|$ is always $2$-Segal (Proposition 4.16). Here we show that $S_\bullet \mathcal{C}$ is always left $2$-Segal for $\mathcal{C}$ a category with cofibrations but is not always $2$-Segal. In particular, a simplicial object in $\Cat$ can be left but not fully $2$-Segal while its geometric realization is fully $2$-Segal. We will also see that $wS_\bullet\mathcal{C}$ behaves similarly to $S_\bullet \mathcal{C}$. Some of our arguments are parallel to the ones given in the previous two sections. 

\begin{lemma}
Let $M_n$ be the category whose objects are chains of $(n - 1)$ cofibrations in $\mathcal{C}$ and whose morphisms are morphisms of such chains. The functor $\mu_n \colon S_n \mathcal{C} \to M_n$ which sends $A$ to 
$A_{0,1} \rightarrowtail A_{0,2} \rightarrowtail \cdots \rightarrowtail A_{0,n}$ is an equivalence. 
\end{lemma}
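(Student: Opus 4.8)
The plan is to show that $\mu_n$ is essentially surjective and fully faithful, which for a functor between ordinary categories is equivalent to being an equivalence. This parallels the proofs of Lemma 6.3 and Lemma 7.2, but now morphisms of chains need not be invertible, so the genuinely new content lies in checking fullness and faithfulness on hom-sets rather than just essential surjectivity.

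For essential surjectivity I would reuse the construction from the inverse in Lemma 6.3. Given a chain $A_{0,1} \rightarrowtail A_{0,2} \rightarrowtail \cdots \rightarrowtail A_{0,n}$, set $A_{k,k} = 0$ for all $k$ and, for $0 < i < j \leq n$, let $A_{i,j}$ be a cokernel of the composite cofibration $A_{0,i} \rightarrowtail A_{0,j}$; equivalently a pushout of $0 \leftarrow A_{0,i} \rightarrowtail A_{0,j}$, which exists and is again a cofibration by the axioms for a category with cofibrations. The induced structure maps assemble these objects into a functor $\Ar[n] \to \mathcal{C}$ satisfying the required cofibration-sequence conditions, hence an object $A \in S_n\mathcal{C}$ with $\mu_n(A)$ literally equal to the given chain. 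Thus $\mu_n$ is in fact surjective on objects.

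For full faithfulness the key observation is that every object $A \in S_n\mathcal{C}$ is determined by its top row: the defining condition applied to $0 < i < j$ gives a cofibration sequence $A_{0,i} \rightarrowtail A_{0,j} \twoheadrightarrow A_{i,j}$, so each $A_{i,j}$ is a cokernel of a top-row cofibration, equipped with a distinguished epimorphism $A_{0,j} \twoheadrightarrow A_{i,j}$. Given a morphism of chains $\{f_{0,j} \colon A_{0,j} \to B_{0,j}\}$, I would define $f_{i,j} \colon A_{i,j} \to B_{i,j}$ as the induced map on cokernels: the composite $A_{0,j} \xrightarrow{f_{0,j}} B_{0,j} \twoheadrightarrow B_{i,j}$ becomes zero after precomposition with $A_{0,i} \rightarrowtail A_{0,j}$, because commutativity of the chain morphism rewrites that precomposite through $A_{0,i} \xrightarrow{f_{0,i}} B_{0,i} \rightarrowtail B_{0,j} \twoheadrightarrow B_{i,j}$, whose last two maps compose to zero. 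Hence it factors uniquely through $A_{0,j} \twoheadrightarrow A_{i,j}$, yielding $f_{i,j}$. Uniqueness in this universal property gives faithfulness at once, since any natural transformation restricting to $\{f_{0,j}\}$ on the top row must have exactly these components; it also pins down the $f_{i,j}$ as the only candidates, so only the existence of a genuine natural transformation remains.

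The one routine verification — and the closest thing to an obstacle — is confirming that the $f_{i,j}$ so defined commute with all the structure maps of $A$ and $B$, i.e.\ constitute a natural transformation of functors on $\Ar[n]$. Here I would again exploit that each $A_{0,j} \twoheadrightarrow A_{i,j}$ is an epimorphism: for any $(i,j) \leq (i',j')$ in $\Ar[n]$, the two composites $A_{i,j} \to B_{i',j'}$ one must compare agree after precomposition with the epimorphism $A_{0,j} \twoheadrightarrow A_{i,j}$, since both then reduce to maps assembled from $f_{0,j}$ together with the already-natural top-row morphisms and cokernel projections. Cancelling the epimorphism forces the two composites to coincide, establishing naturality. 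This completes fullness, and with it the proof that $\mu_n$ is an equivalence.
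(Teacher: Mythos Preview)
Your proposal is correct and follows the same route the paper implicitly takes: the paper gives no proof here beyond the analogy with Lemmas 6.3 and 7.2, and your argument is precisely the natural elaboration of that analogy, building the object from a chain via pushouts and then handling the genuinely new part---full faithfulness for not-necessarily-invertible chain morphisms---by inducing maps on cokernels and cancelling the epimorphisms $A_{0,j} \twoheadrightarrow A_{i,j}$.
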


\begin{lemma}
If $i \colon \{0,j\} \to \alpha = \{\alpha_1, \ldots, \alpha_l \}$ is injective then $S_i\mathcal{C} \colon S_\alpha\mathcal{C} \to S_{\{0,j\}} \mathcal{C}$ is an isofibration. 
\end{lemma}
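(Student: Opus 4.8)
The plan is to unwind the definition of isofibration and then lift the given isomorphism by \emph{transport of structure}, changing exactly one entry of the diagram. Recall that $S_i\mathcal{C}$ is an isofibration precisely when, given an object $A \in S_\alpha\mathcal{C}$ and an isomorphism $\phi \colon S_i(A) \to B$ in $S_{\{0,j\}}\mathcal{C}$, one can produce an object $A' \in S_\alpha\mathcal{C}$ with $S_i(A') = B$ together with an isomorphism $\Psi \colon A \to A'$ satisfying $S_i(\Psi) = \phi$. First I would record what the source and target of $S_i$ look like. Writing $\alpha = \{\alpha_1 < \cdots < \alpha_l\}$ and setting $a := i(0)$ and $b := i(j)$, injectivity and order-preservation of $i$ give $a < b$ (the degenerate case $j = 0$, where $\{0,j\}$ is a single point and $S_{\{0\}}\mathcal{C}$ is terminal, is immediate). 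The functor $S_i$ is precomposition with $\Ar(i)$, and since every object of $S_{\{0,j\}}\mathcal{C}$ has its two diagonal entries equal to $0$ with forced structure maps, $S_i$ amounts to evaluation at the single off-diagonal spot: $S_i(A)$ is determined by $A_{a,b}$, and $\phi$ is determined by its one nontrivial component $\phi_{0,j}\colon A_{a,b} \xrightarrow{\cong} B_{0,j}$ in $\mathcal{C}$.

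Next I would build $A'$ and $\Psi$ explicitly. Set $A'_{p,q} = A_{p,q}$ and $\Psi_{p,q} = \mathrm{id}$ for every $(p,q) \neq (a,b)$, and put $A'_{a,b} = B_{0,j}$ with $\Psi_{a,b} = \phi_{0,j}$. For a structure map $u\colon (p,q) \to (a,b)$ of $A$ define $A'(u) = \phi_{0,j} \circ A(u)$, and for $v\colon (a,b) \to (r,s)$ define $A'(v) = A(v) \circ \phi_{0,j}^{-1}$; all other structure maps are inherited from $A$. A short check shows $A'$ is a functor (in composites routed through $(a,b)$ the two copies of $\phi_{0,j}^{\pm 1}$ cancel) and that $\Psi$ is a natural transformation all of whose components are isomorphisms, hence an isomorphism in the functor category; by construction $S_i(A') = B$ and $S_i(\Psi) = \phi$ hold on the nose.

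The one genuine point to verify, and the step I expect to carry the real content rather than being mere bookkeeping, is that $A'$ again satisfies the defining conditions of $S_\alpha\mathcal{C}$. The diagonal entries are untouched, so $A'_{p,p} = 0$ for all $p$. For the cofibration-sequence condition, note that $(a,b)$ is off-diagonal, so for each triple $p < q < r$ at most one term among $A_{p,q}$, $A_{p,r}$, $A_{q,r}$ is the modified entry, and it is replaced by an isomorphic object with its incident maps conjugated by $\phi_{0,j}^{\pm 1}$. Since isomorphisms are cofibrations and composites of cofibrations are cofibrations, and since pre- or post-composing a cofibration sequence by an isomorphism again yields a cofibration sequence (the cokernel of $\psi \circ f$ is read off from that of $f$), each sequence $A'_{p,q} \rightarrowtail A'_{p,r} \twoheadrightarrow A'_{q,r}$ remains a cofibration sequence. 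Hence $A' \in S_\alpha\mathcal{C}$, the pair $(A', \Psi)$ is the desired lift, and $S_i\mathcal{C}$ is an isofibration.
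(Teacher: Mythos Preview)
Your proof is correct and is in spirit the same idea as the paper's: replace exactly the one entry indexed by the image of $(0,j)$ and transport the incident maps through $\phi_{0,j}^{\pm 1}$. The execution differs, however. The paper works only at the level of the top row, writing down the modified chain of cofibrations together with the componentwise map to the original chain, and then invokes the preceding equivalence $\mu_l \colon S_\alpha\mathcal{C} \xrightarrow{\simeq} M_l$ (Lemma~8.1) to extend both the object and the morphism to all of $S_\alpha\mathcal{C}$. You instead modify the full staircase diagram in place and verify by hand that the cofibration-sequence condition survives, using the observation that for any $p<q<r$ at most one of the three indices can be the altered spot. Your route is more elementary in that it does not appeal to Lemma~8.1 and, as written, does not tacitly assume that $i(0)$ is the minimal element of $\alpha$; the paper's route is shorter because the verification that the extended diagram lies in $S_\alpha\mathcal{C}$ is absorbed into the equivalence with $M_l$.
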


\begin{proof}
Let $A$ be an object of $S_\alpha \mathcal{C}$ and $\tilde{f} \colon 
(S_i\mathcal{C})(A) = A_{0,j} \to B_{0,j}$ be an isomorphism in $S_{\{0,j\}}\mathcal{C}$. We want to
produce an object $B$ of $S_\alpha \mathcal{C}$ and a morphism $f \colon A \to B$ such that $(S_i \mathcal{C})(f) = \tilde{f}$. We gain both $B$ and $f$ by extending the diagram 

\[\begin{tikzcd} [row sep=3mm,column sep = 3mm ,ampersand replacement=\&]
	{A_{0,\alpha_1}} \&\& {A_{0,\alpha_2}} \&\& \cdots \&\& {A_{0,\alpha_k}} \&\& {B_{0,j}} \&\& {A_{0,\alpha_{k + 1}}} \&\& \cdots \&\& {A_{0,\alpha_l}} \\
	\\
	{A_{0,\alpha_1}} \&\& {A_{0,\alpha_2}} \&\& \cdots \&\& {A_{0, \alpha_{k}}} \&\& {A_{0,j}} \&\& {A_{0,\alpha_{k + 1}}} \&\& \cdots \&\& {A_{0, \alpha_l}}
	\arrow["{a_2}", tail, from=3-1, to=3-3]
	\arrow["{a_3}", tail, from=3-3, to=3-5]
	\arrow["{a_k}", tail, from=3-5, to=3-7]
	\arrow["a", tail, from=3-7, to=3-9]
	\arrow["{a_{k +1}}", tail, from=3-9, to=3-11]
	\arrow["{a_{k + 2}}", tail, from=3-11, to=3-13]
	\arrow["{a_l}", tail, from=3-13, to=3-15]
	\arrow["{=}"', from=3-1, to=1-1]
	\arrow["{=}"', from=3-3, to=1-3]
	\arrow["{=}"', from=3-7, to=1-7]
	\arrow["{a_2}", tail, from=1-1, to=1-3]
	\arrow["{a_3}", tail, from=1-3, to=1-5]
	\arrow["{a_k}", tail, from=1-5, to=1-7]
	\arrow["{\tilde{f}}"{description}, from=3-9, to=1-9]
	\arrow["{\tilde{f}a}", tail, from=1-7, to=1-9]
	\arrow["{a_{k + 1}\tilde{f}^{-1}}", tail, from=1-9, to=1-11]
	\arrow["{=}", from=3-11, to=1-11]
	\arrow["{a_{k + 2}}", tail, from=1-11, to=1-13]
	\arrow["{a_l}", tail, from=1-13, to=1-15]
	\arrow["{=}", from=3-15, to=1-15]
\end{tikzcd}\] using the equivalence of the previous lemma.

\end{proof}

We conclude that for any $j$, $S_{\{0,1,\ldots, j\}}\mathcal{C} \times_{S_{\{0,j\}}\mathcal{C}}^h S_{\{0,j,j + 1,\ldots, n\}} \mathcal{C}$ is equivalent to the ordinary pullback. 
Now in the same way as Lemma 6.3 and Proposition 7.2, there is a commutative diagram
\[\begin{tikzcd}[ampersand replacement=\&]
	{S_n \mathcal{C}} \&\&\& {S_{\{0,1,\ldots, j\}}\mathcal{C}\times_{S_{\{0,j\}}\mathcal{C}}S_{\{0,j,j+1,\ldots,n\}}\mathcal{C}} \\
	\\
	{M_n} \&\&\& {M_j \times_{S_{\{0,j\}}\mathcal{C}}M_{n - j + 1}.}
	\arrow[from=1-1, to=1-4]
	\arrow[from=1-1, to=3-1]
	\arrow[from=3-1, to=3-4]
	\arrow[from=1-4, to=3-4]
\end{tikzcd}\] The bottom and vertical maps are equivalences, so the top map is an equivalence. 

Now suppose $\mathcal{C}$ is a Waldhausen category. Consider the map $wS_n  \mathcal{C} \to wS_{\{0,1,\ldots,j\}}\mathcal{C}$
$ \times_{wS_{\{0,j\}}\mathcal{C}}^h wS_{\{0,j,j+1,\ldots,n\}}\mathcal{C}$. The isomorphisms in $wS_m\mathcal{C}$ are the same as the isomorphisms in $S_m\mathcal{C}$. Thus 
$wS_{\{0,1,\ldots,j\}}\mathcal{C} \to wS_{\{0,j\}}\mathcal{C}$ is an isofibration and the homotopy pullback can be replaced by an ordinary pullback. We can for each $n$, consider the category $wM_n$ whose objects are chains of $n - 1$ cofibrations in $\mathcal{C}$ and whose morphisms are componentwise weak equivalences. We get, like above for $S_\bullet \mathcal{C}$, a commutative diagram 

\[\begin{tikzcd}[sep = small]
	{wS_n\mathcal{C}} && {wS_{\{0,1,\ldots, j\}}\mathcal{C}\times_{wS_{\{0,j\}}\mathcal{C}}wS_{\{0,j,j+1,\ldots,n\}}\mathcal{C}} \\
	\\
	{wM_n} && {wM_j \times_{wS_{\{0,j\}}\mathcal{C}}wM_{n - j + 1}}
	\arrow[from=1-1, to=1-3]
	\arrow[from=1-1, to=3-1]
	\arrow[from=1-3, to=3-3]
	\arrow[from=3-1, to=3-3]
\end{tikzcd}\] where the bottom and vertical maps are equivalences..

From the above considerations in $S_\bullet \mathcal{C}$ and $wS_\bullet\mathcal{C}$ we get the following proposition. 

\begin{proposition}
The left $2$-Segal maps of $S_\bullet\mathcal{C}$ and $wS_\bullet \mathcal{C}$, are equivalences of categories. 
\end{proposition}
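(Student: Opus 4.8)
The plan is to assemble the proposition from the two commutative squares and the lemmas established immediately above, reducing an arbitrary left $2$-Segal map to a composite of the one-step maps $S_n\mathcal{C} \to S_{\{0,j,\ldots,n\}}\mathcal{C} \times^h_{S_{\{0,j\}}\mathcal{C}} S_{\{0,1,\ldots,j\}}\mathcal{C}$ and then verifying that each one-step map is an equivalence. Recall first that a left $2$-Segal map is by definition the map $f_\mathcal{T}$ attached to the triangulation $\mathcal{T}$ of $P_n$ in which every triangle contains the vertex $0$, i.e. the fan triangulation. Exactly as in the proof of Lemma 6.1, such an $f_\mathcal{T}$ factors as a finite composite of the one-step maps above, one for each diagonal out of $0$, the iterated $2$-fiber product on the target being rebuilt by nesting the individual $2$-fiber products. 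Since equivalences of categories are closed under composition, it then suffices to treat a single one-step map.

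Fixing $j$, I would invoke the isofibration lemma above: since $\{0,j\} \hookrightarrow \{0,j,\ldots,n\}$ is injective, the restriction $S_{\{0,j,\ldots,n\}}\mathcal{C} \to S_{\{0,j\}}\mathcal{C}$ is an isofibration, hence a fibration in the canonical model structure on $\Cat$. As every object of $\Cat$ is fibrant, this model structure is right proper, so the homotopy pullback defining the target of the one-step map is computed by the ordinary pullback; this also makes the nesting of the previous paragraph a literal iterated strict pullback. I would then read the conclusion off the displayed square whose vertical maps $\mu_n$ and $\mu_j \times \mu_{n-j+1}$ are equivalences, by the lemma identifying $S_n\mathcal{C}$ with chains of cofibrations, and whose bottom map is the evident splitting of a chain of $n-1$ cofibrations into its initial $j-1$ and terminal $n-j$ arrows, an equivalence just as the map $\phi_j$ of Section 6. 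Two-out-of-three for equivalences of categories then forces the top map, namely the one-step left $2$-Segal map, to be an equivalence.

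The case of $wS_\bullet\mathcal{C}$ is identical once one notes that the isomorphisms of $wS_m\mathcal{C}$ coincide with those of $S_m\mathcal{C}$; consequently $wS_{\{0,\ldots,j\}}\mathcal{C} \to wS_{\{0,j\}}\mathcal{C}$ remains an isofibration, the relevant homotopy pullback is again strict, and the displayed square built from the categories $wM_n$ of chains with componentwise weak equivalences again has equivalences for its vertical and bottom edges. I expect the only genuine bookkeeping — and hence the main obstacle — to be verifying that the composite of one-step maps literally reconstructs $f_\mathcal{T}$ as an iterated strict pullback; this is exactly what the isofibration lemma buys, since without pullbacks along fibrations the iterated homotopy pullbacks would not compose on the nose.
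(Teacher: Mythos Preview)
Your proposal is correct and follows essentially the same approach as the paper: invoke Lemma 8.2 to replace the homotopy pullback by a strict pullback, then read off the one-step equivalence from the displayed square with the $M_n$'s, and treat $wS_\bullet\mathcal{C}$ identically after observing that isomorphisms in $wS_m\mathcal{C}$ agree with those in $S_m\mathcal{C}$. The only difference is that you spell out the Lemma~6.1-style reduction from the fan-triangulation map $f_\mathcal{T}$ to the one-step maps, whereas the paper leaves this passage implicit and simply records the one-step case before stating the proposition.
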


\begin{remark}
The nerve functor applied to a morphism which is the image of an injection under $S_\bullet\mathcal{C}$ is not a fibration of simplicial sets since this nerve is not cofibered over groupoids \cite[2.1.1.2]{Lurie09}. Since $N(S_i\mathcal{C})$ is not a fibration we do not have a reason to be able to replace a homotopy pullback with a pullback when studying the $2$-Segal maps of $|S_\bullet \mathcal{C}|$. Just like the $S_\bullet \mathcal{C}$ case,  $wS_{\{0,1,\ldots,j\}}\mathcal{C} \to wS_{\{0,j\}}\mathcal{C}$ is never fibered in groupoids.
\end{remark}

\begin{proposition}
There is a category with cofibrations $\mathcal{D}$ such that the natural map 
$S_n \mathcal{D} \to S_{\{0,1,n\}}\mathcal{D} \times_{S_{\{1,n\}}\mathcal{D}}S_{\{1,2,\ldots,n\}}\mathcal{D}$ is not essentially surjective.
\end{proposition}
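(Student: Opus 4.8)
The plan is to reuse verbatim the category with cofibrations $\mathcal{D}=\langle\mathcal{D}_0\rangle\subseteq\mathcal{R}_f$ constructed in the proof of Proposition 6.9 and to rerun the same cell-counting obstruction there. The only genuine difference is bookkeeping: in Proposition 6.9 one worked with sets of isomorphism classes and plain surjectivity, whereas here $S_\bullet\mathcal{D}$ carries all morphisms and the conclusion must be upgraded to the failure of \emph{essential} surjectivity, i.e. the absence of a lift even up to isomorphism.

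First I would dispose of the case $n=3$. An object of the ordinary pullback $S_{\{0,1,3\}}\mathcal{D}\times_{S_{\{1,3\}}\mathcal{D}}S_{\{1,2,3\}}\mathcal{D}$ is a pair consisting of a cofibration sequence $A_{0,1}\rightarrowtail A_{0,3}\twoheadrightarrow A_{1,3}$ and a cofibration sequence $A_{1,2}\rightarrowtail A_{1,3}\twoheadrightarrow A_{2,3}$ sharing \emph{literally} the same middle object $A_{1,3}$. I would take the data from Proposition 6.9, namely $A_{0,1}=S^1$, $A_{0,3}=T^2$, $A_{1,3}=T^2/S^1$, $A_{1,2}=\hat{\Delta}$, and $A_{2,3}=S^2$ with the maps displayed there; choosing the shared term $A_{1,3}=T^2/S^1$ on the nose places this datum in the ordinary pullback rather than merely in the $2$-fiber product. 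Since the functor $S_3\mathcal{D}\to S_{\{0,1,3\}}\mathcal{D}\times_{S_{\{1,3\}}\mathcal{D}}S_{\{1,2,3\}}\mathcal{D}$ forgets the entry $A_{0,2}$, a lift amounts to a completion $A_{0,2}$; the cofibration-sequence conditions at $0<1<2$ and $0<2<3$ force $A_{0,2}$ to be a subcomplex of $T^2$ with $A_{0,2}/S^1\cong\hat{\Delta}$ and $T^2/A_{0,2}\cong S^2$, whose cell count makes $A_{0,2}\cong P$, the triangle with all its vertices identified.

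The step that genuinely diverges from Proposition 6.9, and which I expect to be the main obstacle, is ruling out a lift only up to isomorphism. Suppose our object were isomorphic in the pullback to the image of some $C\in S_3\mathcal{D}$. An isomorphism in this pullback is a levelwise isomorphism of the underlying $\Ar$-diagrams, and isomorphisms in $\mathcal{R}_f$ are based cellular homeomorphisms, so they preserve the number of cells in each dimension; hence $C_{0,3}\cong T^2$, $C_{0,1}\cong S^1$, $C_{1,2}\cong\hat{\Delta}$, and $C_{2,3}\cong S^2$. Then $C_{0,2}\rightarrowtail C_{0,3}$ realizes a subcomplex of $T^2$ with the same cell count as before, whence $C_{0,2}\cong P$, forcing $P$ into $\mathcal{D}$ and contradicting Proposition 6.9. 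Thus no such $C$ exists and the map fails to be essentially surjective. Once this invariance of the homeomorphism-type obstruction under the isomorphisms of the pullback is recorded, the general $n$ is handled exactly as in Proposition 6.9: one pads the bad $n=3$ datum with zeros along the top row (the large diagram displayed there), so that the identical completion-by-$P$ obstruction recurs, and I anticipate no new difficulty there.
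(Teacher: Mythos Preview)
Your proposal is correct and follows essentially the same approach as the paper, which simply says that the category $\mathcal{D}$ from Proposition 6.9 works and the proof is the same. Your explicit treatment of the passage from strict surjectivity to essential surjectivity---observing that isomorphisms in $\mathcal{R}_f$ are based cellular homeomorphisms, so the cell-count obstruction forcing $C_{0,2}\cong P$ is invariant under isomorphism in the pullback---is the one genuine bookkeeping addition, and it is handled correctly.
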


The same category $\mathcal{D}$ used for Proposition 6.9 can be used here, and the proof is the same as the one there. Since equivalences of categories must be essentially surjective, we get that the $2$-Segal map $S_n \mathcal{D} \to S_{\{0,1,n\}}\mathcal{D} \times_{S_{\{1,n\}}\mathcal{D}}^h S_{\{1,2,\ldots,n\}}\mathcal{D}$ is not an equivalence. Proposition 6.5 has an analogue with the same proof here as well.

\begin{proposition}
The $2$-Segal maps $f_\mathcal{T}$ corresponding to triangulations where some triangle does not contain 0 are not necessarily weak equivalences.
\end{proposition}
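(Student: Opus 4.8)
The plan is to transcribe the proof of Proposition 6.5 into the categorical setting, replacing ``bijection'' by ``equivalence of categories,'' ``surjective'' by ``essentially surjective,'' and the ordinary limits of sets by ordinary limits (iterated pullbacks) of categories. The first move is to reduce ``$f_\mathcal{T}$ is not a weak equivalence'' to ``a functor modeling $f_\mathcal{T}$ is not essentially surjective.'' Since the weak equivalences of the canonical model structure on $\Cat$ are the equivalences of categories, and every equivalence is essentially surjective, it suffices to exhibit, for each triangulation $\mathcal{T}$ of some $P_n$ in which a triangle avoids $0$, such a non-essentially-surjective functor. As in the discussion following the preceding Proposition (the one producing $\mathcal{D}\subseteq\mathcal{R}_f$), the homotopy limit computing $f_\mathcal{T}$ may be replaced by the ordinary limit of the categories $S_{T_i}\mathcal{C}$ glued along their shared edges: a restriction functor $S_\alpha\mathcal{C}\to S_{\{a,b\}}\mathcal{C}$ onto an edge containing the least vertex of $\alpha$ is an isofibration by Lemma 8.2 (after the order-relabeling identifying $\alpha$ with $[\,|\alpha|-1\,]$), so the iterated ordinary pullback models the iterated homotopy pullback. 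Thus it is enough to show the natural functor from $S_n\mathcal{C}$ into this ordinary limit is not essentially surjective.

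The base case $n=3$ is exactly the preceding Proposition: for the category with cofibrations $\mathcal{D}$ constructed there, the functor $S_3\mathcal{D}\to S_{\{0,1,3\}}\mathcal{D}\times_{S_{\{1,3\}}\mathcal{D}}S_{\{1,2,3\}}\mathcal{D}$ is not essentially surjective. Fixing this $\mathcal{D}$ as $\mathcal{C}$, I would induct on the number of sides. Suppose the result holds through $P_k$ and let $\mathcal{T}$ triangulate $P_{k+1}$, with cyclically labeled vertices $0,1,\dots,k+1$, so that some triangle avoids $0$. Viewing $\mathcal{T}$ as a graph, I split into the two cases of Proposition 6.5 according to whether $\deg(0)=2$ or $\deg(0)>2$.

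When $\deg(0)=2$, Lemma \ref{ConsecTri} forces $\{0,1,k+1\}$ to be a triangle of $\mathcal{T}$, and the induced triangulation on the vertices $1,\dots,k+1$ contains a strictly consecutive triangle $\{j-1,j,j+1\}$ by Lemma \ref{StrictConsecTri}. I would then form the commutative square whose horizontal arrows are the $2$-Segal functors into the respective ordinary limits, whose left vertical arrow is the face functor $S_{k+1}\mathcal{C}\to S_{\{0,1,\dots,\hat{j},\dots,k+1\}}\mathcal{C}$ (essentially surjective, since a chain of cofibrations can always be refined by inserting a term), and whose right vertical arrow is the projection deleting the triangle $\{j-1,j,j+1\}$ together with the diagonal $\{j-1,j+1\}$. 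This projection is essentially surjective on objects by the $S_\bullet$-analogue of Lemma 6.13, proved exactly as Lemma \ref{Projection} was: one fills the deleted apex by a pushout and records the requisite coherence isomorphisms. Because the square commutes, if the top functor were essentially surjective then so would be the composite equal to (bottom)$\circ$(left), forcing the bottom functor to be essentially surjective as the final factor of an essentially surjective composite; this contradicts the inductive hypothesis. The case $\deg(0)>2$ is handled as in Proposition 6.5: a diagonal $\{0,j\}$ splits $P_{k+1}$ into two labeled triangulated subpolygons, one of which still carries a triangle avoiding $0$; I restrict attention to its $2$-Segal functor and repeat until the vertex $0$ has degree $2$, then apply the previous paragraph.

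I expect the main obstacle to be bookkeeping rather than a new idea: one must check, for every triangulation arising in the induction, that the iterated homotopy pullback really is modeled by the iterated ordinary pullback. This requires orienting the gluing diagram of the $S_{T_i}\mathcal{C}$ so that at each shared edge one of the two restriction functors is an isofibration in the sense of Lemma 8.2, which in turn means tracking which vertex is least in each subpolygon after the relabelings. Once this fibrancy is secured, the essential-surjectivity argument is a faithful translation of the set-level proof, the only added care being the coherence isomorphisms carried by objects of the limit categories.
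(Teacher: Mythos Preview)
Your proposal is correct and takes essentially the same approach as the paper: the paper's proof of this proposition is a one-line pointer saying that Proposition 6.5 has an analogue here with the same proof, and your write-up is precisely that translation, carried out with the replacements ``bijection $\to$ equivalence,'' ``surjective $\to$ essentially surjective,'' and ordinary limits of sets by ordinary (iterated) pullbacks of categories. Your added care about the fibrancy bookkeeping (ensuring via Lemma 8.2 that at each diagonal the iterated homotopy pullback is computed by the ordinary pullback) goes beyond what the paper spells out, but is in the spirit of the Section 8 discussion; in fact the restriction $S_\alpha\mathcal{C}\to S_{\{a,b\}}\mathcal{C}$ is an isofibration for \emph{any} edge $\{a,b\}\subseteq\alpha$ (replace $A_{a,b}$ by the isomorphic object and conjugate the four adjacent maps), so the orientation issue you flag is even easier than you suggest.
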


Because $iS_\bullet\mathcal{C}$ is a special case of $wS_\bullet\mathcal{C}$, and we have replaced the homotopy pullback with an ordinary pullback, we have a $wS_\bullet\mathcal{C}$ version of Proposition 8.6.

\begin{proposition}
The $2$-Segal maps $f_\mathcal{T}$ with source $wS_\bullet \mathcal{C}$ corresponding to triangulations where some triangle does not contain $0$ are not necessarily weak equivalences.
\end{proposition}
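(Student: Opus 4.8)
The plan is to obtain the statement by reduction to the special case already treated for $iS_\bullet\mathcal{C}$. A category with cofibrations $\mathcal{C}$ becomes a Waldhausen category by declaring the weak equivalences to be precisely the isomorphisms, and for this choice $wS_\bullet\mathcal{C}$ and $iS_\bullet\mathcal{C}$ agree on the nose; each $wS_n\mathcal{C}=iS_n\mathcal{C}$ is then a groupoid. Since the assertion is only that the maps need \emph{not} be weak equivalences, it suffices to produce a single category with cofibrations together with a triangulation $\mathcal{T}$, some triangle of which avoids the vertex $0$, for which $f_\mathcal{T}$ fails to be an equivalence. Thus the entire problem collapses to the $iS_\bullet$ counterexample, which is a $wS_\bullet$ example under this choice of $w$.

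First I would record why defeating essential surjectivity is enough. Because the face maps $wS_\alpha\mathcal{C}\to wS_{\{0,j\}}\mathcal{C}$ are isofibrations (the isomorphisms of $wS_m\mathcal{C}$ being exactly those of $S_m\mathcal{C}$, as established above), the homotopy pullbacks defining the $2$-Segal maps may be replaced by strict pullbacks of categories, so that $f_\mathcal{T}$ is a weak equivalence exactly when it is an equivalence of categories. For groupoids an equivalence of categories is in particular essentially surjective, and by Lemma~\ref{EssentSurj} a functor of groupoids that is not essentially surjective cannot even induce a weak homotopy equivalence on realizations. Hence it is enough to exhibit a $2$-Segal map into a strict pullback of $wS$-categories that is not essentially surjective.

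For the base case I would take $\mathcal{D}$ to be the subcategory of $\mathcal{R}_f$ built in the proof of Proposition 6.9 (equivalently the category of Proposition~\ref{BadSubdivision}), with $w$ the isomorphisms. The CW-complex argument there shows that the polygonal subdivision of $P_n$ into the triangle $\{0,1,n\}$ and the sub-polygon $\{1,2,\ldots,n\}$ cannot be filled: the pushout complex $P$ that an essential-surjectivity witness would require does not lie in $\langle\mathcal{D}_0\rangle$, so $wS_n\mathcal{D}\to wS_{\{0,1,n\}}\mathcal{D}\times_{wS_{\{1,n\}}\mathcal{D}}wS_{\{1,2,\ldots,n\}}\mathcal{D}$ is not essentially surjective. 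To upgrade from this one subdivision to an arbitrary triangulation missing $0$, I would rerun the induction on the number of sides from the proof of Proposition 6.5, with limits of sets replaced by ordinary limits of categories and ``surjective'' replaced by ``essentially surjective''; the combinatorial input is unchanged, Lemma~\ref{StrictConsecTri} producing a strictly consecutive triangle, Lemma~\ref{ConsecTri} controlling the valency of $0$, and Lemma~\ref{Projection} furnishing the essential surjectivity of the projection off the split-off triangle, so that the commutative square of that proof again forces the top map to fail essential surjectivity. This is precisely the $wS_\bullet\mathcal{C}$ analogue of Proposition 8.6.

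The main obstacle is not conceptual but lies in making the pullback-for-homotopy-pullback replacement legitimate uniformly across the induction: one must check that every face map split off at an intermediate stage is again an isofibration and that essential surjectivity is correctly tracked through composites (using that essential surjectivity of $GF$ forces it for $G$). Once this bookkeeping is in place the argument is the verbatim categorical translation of Proposition 6.5, and the reduction of the first paragraph makes clear that no genuinely new work beyond the $iS_\bullet$ and $S_\bullet$ cases is needed.
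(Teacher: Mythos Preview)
Your proposal is correct and follows essentially the same approach as the paper: the paper's entire proof is the one-line observation that $iS_\bullet\mathcal{C}$ is a special case of $wS_\bullet\mathcal{C}$ (taking $w$ to be the isomorphisms) and that the homotopy pullbacks have already been replaced by ordinary pullbacks, so Proposition~8.6 applies verbatim. Your write-up unpacks this more carefully---spelling out the isofibration replacement, the role of essential surjectivity, and the induction from Proposition~6.5---but the route is the same.
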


\section{A sufficient condition for $S_\bullet \mathcal{C}$ to be 2-Segal} \label{C8}
\thispagestyle{myheadings}

We now have some examples of possible strict left $2$-Segal spaces : $\text{iso}(s_\bullet\mathcal{C})$, $S_\bullet\mathcal{C}$, $wS_\bullet\mathcal{C}$, and $|iS_\bullet\mathcal{C}|$. Therefore, we now look for sufficient conditions under which some of these simplicial spaces are necessarily $2$-Segal. Analyzing our previous work we find the following criterion. 

\begin{proposition}
Let $\mathcal{C}$ be a category with cofibrations. Then $S_\bullet\mathcal{C}$ is $2$-Segal if any diagram in $\mathcal{C}$ of the form \[\begin{tikzcd}[sep = small]
	A && \bullet && X \\
	\\
	0 && B && Y
	\arrow[curve={height=-18pt}, tail, from=1-1, to=1-5]
	\arrow[two heads, from=1-5, to=3-5]
	\arrow[tail, from=3-3, to=3-5]
	\arrow[from=1-1, to=3-1]
	\arrow[tail, from=3-1, to=3-3]
\end{tikzcd}\] can be extended to \[\begin{tikzcd}[sep = small]
	A && C && X \\
	\\
	0 && B && Y
	\arrow[curve={height=-18pt}, tail, from=1-1, to=1-5]
	\arrow[two heads, from=1-5, to=3-5]
	\arrow[from=1-1, to=3-1]
	\arrow[tail, from=3-1, to=3-3]
	\arrow[tail, from=3-3, to=3-5]
	\arrow[dashed, tail, from=1-1, to=1-3]
	\arrow[dashed, tail, from=1-3, to=1-5]
	\arrow[two heads, from=1-3, to=3-3]
\end{tikzcd}\] where the squares are pushouts and the right square is a pullback. 
    
\end{proposition}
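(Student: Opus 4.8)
The plan is to reduce the $2$-Segal condition for $S_\bullet\mathcal{C}$ to a single explicit reconstruction problem, which is exactly what the hypothesis solves, and to run that reconstruction inductively.

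First I would reduce to elementary maps. It is standard (see \cite{DyckerhoffKapranov12}) that every polygonal subdivision of $P_n$ is obtained from the trivial one by adjoining diagonals one at a time, so that each $2$-Segal map $f_\mathcal{P}$ factors as a composite of the \emph{elementary} maps
\[
S_n\mathcal{C}\longrightarrow S_{\{0,1,\dots,i,j,j+1,\dots,n\}}\mathcal{C}\times^h_{S_{\{i,j\}}\mathcal{C}}S_{\{i,i+1,\dots,j\}}\mathcal{C}
\]
associated to cutting $P_n$ along a single diagonal $\{i,j\}$. Since a composite of equivalences is an equivalence, it suffices to show each elementary map is one. Relabeling $\{i,i+1,\dots,j\}$ so that $i$ becomes $0$, Lemma 8.2 shows the leg $S_{\{i,i+1,\dots,j\}}\mathcal{C}\to S_{\{i,j\}}\mathcal{C}$ is an isofibration; as $\Cat$ is right proper (Section 5), the homotopy pullback may then be replaced by the strict one, exactly as in the discussion following Lemma 8.2. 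So it remains to check each elementary map is essentially surjective and fully faithful into the ordinary pullback. Note the diagonals with $i=0$ give precisely the left $2$-Segal maps already handled by Proposition 8.3; the new content is the case $i>0$, where the hypothesis is needed.

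Next I would establish essential surjectivity by iterating the hypothesis. Under the equivalence $\mu_n\colon S_n\mathcal{C}\xrightarrow{\simeq}M_n$ of Lemma 8.1, an object of the target pullback consists of a chain $A_{0,1}\rightarrowtail\dots\rightarrowtail A_{0,i}\rightarrowtail A_{0,j}\rightarrowtail\dots\rightarrowtail A_{0,n}$ (recorded by the first factor) together with a chain $A_{i,i+1}\rightarrowtail\dots\rightarrowtail A_{i,j}$ (recorded by the second), compatibly identified along $A_{0,j}/A_{0,i}\cong A_{i,j}$. To land in $S_n\mathcal{C}$ I must reconstruct the missing top-row objects $A_{0,i+1},\dots,A_{0,j-1}$. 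This is precisely the hypothesis: applying it with $A=A_{0,i}$, $X=A_{0,j}$, $Y=A_{i,j}$, and $B=A_{i,i+1}$ produces $C=A_{0,i+1}$ sitting in $A_{0,i}\rightarrowtail A_{0,i+1}\rightarrowtail A_{0,j}$ with $A_{0,i+1}/A_{0,i}\cong A_{i,i+1}$ and, from the right-hand pushout, $A_{0,j}/A_{0,i+1}\cong A_{i,j}/A_{i,i+1}=A_{i+1,j}$. Feeding $A_{0,i+1}$, the new cokernel $A_{i+1,j}$, and $B=A_{i+1,i+2}$ back into the hypothesis yields $A_{0,i+2}$, and so on; after $j-i-1$ steps the whole chain is filled, giving via $\mu_n^{-1}$ an object of $S_n\mathcal{C}$ whose restrictions to the two factors agree with the given data.

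Finally I would obtain full faithfulness from the pullback half of the hypothesis: at each stage the right square is a pullback, so $A_{0,i+m}\cong A_{0,j}\times_{A_{i+m-1,j}}A_{i+m-1,i+m}$, and all three objects occurring here are recorded by the two factors of the fiber product; the universal property then shows a morphism of the given data determines a unique compatible morphism on each reconstructed object, hence a unique morphism of $S_n\mathcal{C}$-objects. I expect the main obstacle to be the bookkeeping of this iteration: one must verify that at every step the datum $B$ really is a cofibration into the current cokernel $Y$ and that the successively chosen pushout–pullback squares splice into one coherent functor $\Ar[n]\to\mathcal{C}$ (equivalently, that the reconstructed top row is a genuine chain of cofibrations with the prescribed subquotients), alongside the verification that the relevant leg of each elementary cospan is an isofibration so the strict pullback computes the homotopy pullback.
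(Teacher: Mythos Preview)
Your proposal is correct and follows essentially the same route as the paper: reduce to elementary single-diagonal cuts, replace the homotopy pullback by the strict one via an isofibration argument, and then iterate the hypothesis to fill in the missing top-row entries one at a time, deducing full faithfulness from the pullback half of the hypothesis. The only cosmetic difference is in the reduction step: the paper invokes \cite[2.3.2(4)]{DyckerhoffKapranov12} to reduce to the left and right $2$-Segal maps (diagonals through $0$ or through $n$), whereas you reduce to an arbitrary diagonal $\{i,j\}$; both are valid, and once you relabel $\{i,\dots,j\}$ so that $i$ becomes $0$ your argument and the paper's for the right $2$-Segal case are the same iteration.
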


\begin{proof}
By \cite[2.3.2(4)]{DyckerhoffKapranov12} and since we argued in Section 8 that we can replace homotopy pullbacks in the $2$-Segal maps of $S_\bullet \mathcal{C}$ with ordinary pullbacks, it suffices to show that the $2$-Segal maps of the forms $S_n\mathcal{C} \to S_{\{0,j,j+1,\ldots, n\}}\mathcal{C} \times_{S_{\{0,j\}}\mathcal{C}}S_{\{0,1,\ldots j\}}\mathcal{C}$ and $S_n\mathcal{C} \to S_{\{0,1,\ldots j,n\}}\mathcal{C} \times_ {S_{\{j,n\}}\mathcal{C}} S_{\{j,j+1,\ldots,n\}}\mathcal{C}$ are equivalences. The $2$-Segal maps of the first of these two forms are equivalences by the work in Section 8. We focus on $2$-Segal maps of the second form. To show essential surjectivity of these it suffices to show that the diagram 
\[\begin{tikzcd}[row sep=1.5mm, column sep = 1.5mm]
	{A_{0,1}} && {A_{0,2}} && \cdots && {A_{0,j}} && \bullet && \bullet && \cdots && \bullet && {A_{0,n}} \\
	\\
	&& {A_{1,2}} && \cdots && {A_{1,j}} && \bullet && \bullet && \cdots && \bullet && {A_{1,n}} \\
	\\
	&&&&&& \vdots && \vdots && \vdots && \vdots && \vdots && \vdots \\
	\\
	&&&&&& {A_{j-1,j}} && \bullet && \bullet && \cdots && \bullet && {A_{j-1,n}} \\
	\\
	&&&&&&&& {A_{j,j+1}} && {A_{j, j+2}} && \cdots && {A_{j,n-1}} && {A_{j,n}} \\
	\\
	&&&&&&&&&& {A_{j+1,j+2}} && \cdots && {A_{j +1,n-1}} && {A_{j +1,n}} \\
	\\
	&&&&&&&&&&&&&& \vdots && \vdots \\
	\\
	&&&&&&&&&&&&&& {A_{n-2,n-1}} && {A_{n-2,n}} \\
	\\
	&&&&&&&&&&&&&&&& {A_{n-1,n}}
	\arrow[tail, from=1-1, to=1-3]
	\arrow[tail, from=1-3, to=1-5]
	\arrow[tail, from=1-5, to=1-7]
	\arrow[tail, from=3-3, to=3-5]
	\arrow[tail, from=3-5, to=3-7]
	\arrow[two heads, from=1-3, to=3-3]
	\arrow[two heads, from=1-7, to=3-7]
	\arrow[two heads, from=3-7, to=5-7]
	\arrow[two heads, from=5-7, to=7-7]
	\arrow[curve={height=-18pt}, tail, from=1-7, to=1-17]
	\arrow[two heads, from=1-17, to=3-17]
	\arrow[two heads, from=3-17, to=5-17]
	\arrow[two heads, from=5-17, to=7-17]
	\arrow[two heads, from=7-17, to=9-17]
	\arrow[curve={height=-18pt}, tail, from=3-7, to=3-17]
	\arrow[curve={height=-18pt}, tail, from=7-7, to=7-17]
	\arrow[tail, from=9-9, to=9-11]
	\arrow[tail, from=9-11, to=9-13]
	\arrow[tail, from=9-13, to=9-15]
	\arrow[tail, from=9-15, to=9-17]
	\arrow[two heads, from=9-11, to=11-11]
	\arrow[two heads, from=9-15, to=11-15]
	\arrow[two heads, from=9-17, to=11-17]
	\arrow[tail, from=11-11, to=11-13]
	\arrow[tail, from=11-13, to=11-15]
	\arrow[tail, from=11-15, to=11-17]
	\arrow[two heads, from=11-15, to=13-15]
	\arrow[two heads, from=11-17, to=13-17]
	\arrow[two heads, from=13-15, to=15-15]
	\arrow[two heads, from=13-17, to=15-17]
	\arrow[tail, from=15-15, to=15-17]
	\arrow[two heads, from=15-17, to=17-17]
\end{tikzcd}\] can be extended to an object of $S_n\mathcal{C}$ compatible with the given morphisms. We make this extension one object at a time. First, we use our hypothesis to extend \[\begin{tikzcd}[sep = small]
	{A_{0,j}} && \bullet && {A_{0,n}} \\
	\\
	0 && {A_{j,j+1}} && {A_{j,n}.}
	\arrow[two heads, from=1-1, to=3-1]
	\arrow[tail, from=3-1, to=3-3]
	\arrow[tail, from=3-3, to=3-5]
	\arrow[two heads, from=1-5, to=3-5]
	\arrow[curve={height=-18pt}, tail, from=1-1, to=1-5]
\end{tikzcd}\] The newly added object we denote by $A_{0,j +1}$. Next, we use our hypothesis to extend \[\begin{tikzcd}[sep = small]
	{A_{0,j+1}} && \bullet && {A_{0,n}} \\
	\\
	0 && {A_{j+1,j+2}} && {A_{j +1,n}.}
	\arrow[curve={height=-18pt}, tail, from=1-1, to=1-5]
	\arrow[two heads, from=1-1, to=3-1]
	\arrow[tail, from=3-1, to=3-3]
	\arrow[tail, from=3-3, to=3-5]
	\arrow[two heads, from=1-5, to=3-5]
\end{tikzcd}\] Proceeding in this way fill out the first row. Now we add in the rest of the required objects by taking cokernels of composites of morphisms in the first row. Compatibility with preexisting morphisms in the first $j$ rows of the diagram comes from the uniqueness part of the universal property for pushouts. That the last $n - j$ rows may come from choices of cokernels in the first row comes from the fact that if $A \rightarrowtail B \twoheadrightarrow D$, $(A \rightarrowtail B \rightarrowtail C) \twoheadrightarrow E$, and $D \rightarrowtail E \twoheadrightarrow F$ are cofibration sequences, then so is $B \rightarrowtail C \twoheadrightarrow F$. To show fullness, it suffices to show that \[\begin{tikzcd}[sep = small]
	& {B_{0,j}} &&&& {B_{0,n}} \\
	{A_{0j}} &&&& {A_{0n}} \\
	& 0 && {B_{j,j+1}} && {B_{j,n}} \\
	0 && {A_{j,j+1}} && {A_{j,n}}
	\arrow[from=2-1, to=1-2]
	\arrow[from=2-5, to=1-6]
	\arrow[two heads, from=1-6, to=3-6]
	\arrow[two heads, from=2-5, to=4-5]
	\arrow[tail, from=4-3, to=4-5]
	\arrow[from=4-5, to=3-6]
	\arrow[from=4-3, to=3-4]
	\arrow[tail, from=3-4, to=3-6]
	\arrow[from=2-1, to=4-1]
	\arrow[from=1-2, to=3-2]
	\arrow[tail, from=3-2, to=3-4]
	\arrow[tail, from=4-1, to=4-3]
	\arrow[from=4-1, to=3-2]
	\arrow[curve={height=-18pt}, tail, from=1-2, to=1-6]
	\arrow[curve={height=-18pt}, tail, from=2-1, to=2-5]
\end{tikzcd}\] can be extended to \[\begin{tikzcd}[sep = small]
	& {B_{0,j}} && {B_{0,j+1}} && {B_{0,n}} \\
	{A_{0j}} && {A_{0, j+1}} && {A_{0n}} \\
	& 0 && {B_{j,j+1}} && {B_{j,n}} \\
	0 && {A_{j,j+1}} && {A_{j,n}}
	\arrow[from=2-1, to=1-2]
	\arrow[from=2-5, to=1-6]
	\arrow[two heads, from=1-6, to=3-6]
	\arrow[two heads, from=2-5, to=4-5]
	\arrow[tail, from=4-3, to=4-5]
	\arrow[from=4-5, to=3-6]
	\arrow[from=4-3, to=3-4]
	\arrow[tail, from=3-4, to=3-6]
	\arrow[from=2-1, to=4-1]
	\arrow[from=1-2, to=3-2]
	\arrow[tail, from=3-2, to=3-4]
	\arrow[tail, from=4-1, to=4-3]
	\arrow[from=4-1, to=3-2]
	\arrow[curve={height=-18pt}, tail, from=1-2, to=1-6]
	\arrow[curve={height=-18pt}, tail, from=2-1, to=2-5]
	\arrow[from=2-3, to=1-4]
	\arrow[tail, from=2-1, to=2-3]
	\arrow[tail, from=2-3, to=2-5]
	\arrow[two heads, from=2-3, to=4-3]
	\arrow[tail, from=1-2, to=1-4]
	\arrow[tail, from=1-4, to=1-6]
	\arrow[two heads, from=1-4, to=3-4]
\end{tikzcd}\] where the extension on the front and back faces comes from applying the hypothesis. This extension is obtained by applying the hypothesis to the front and back faces and then using that the back right square is a pullback square. We must check commutativity of the three squares containing the morphism $A_{0,j+1} \to B_{0,j+1}$. The top right square and vertical middle square commute because the morphism $A_{0,j +1} \to B_{0,j+1}$ was induced by the commutative square \[\begin{tikzcd}[sep = small]
	{A_{0,j+1}} & {A_{0,n}} & {B_{0,n}} \\
	{A_{j,j+1}} \\
	{B_{j,j+1}} && {B_{j,n}}
	\arrow[tail, from=1-1, to=1-2]
	\arrow[from=1-2, to=1-3]
	\arrow[tail, from=3-1, to=3-3]
	\arrow[two heads, from=1-3, to=3-3]
	\arrow[two heads, from=1-1, to=2-1]
	\arrow[from=2-1, to=3-1]
\end{tikzcd}.\] Commutativity of the top left square comes from the uniqueness part of the universal property of the pullback: $A_{0,j} \to B_{0,j} \rightarrowtail B_{0,j+1}$ and $A_{0,j} \rightarrowtail A_{0,j+1} \to B_{0,j+1}$ both can both be added to give a commutative diagram \[\begin{tikzcd}[sep=small]
	{A_{0,j}} && {B_{0,j}} \\
	\\
	0 && {B_{0,j+1}} &&& {B_{0,n}} \\
	\\
	\\
	&& {B_{j,j+1}} &&& {B_{j,n}}
	\arrow[tail, from=6-3, to=6-6]
	\arrow[two heads, from=3-6, to=6-6]
	\arrow[tail, from=3-3, to=3-6]
	\arrow[two heads, from=3-3, to=6-3]
	\arrow[dashed, from=1-1, to=3-3]
	\arrow[from=1-1, to=1-3]
	\arrow[tail, from=1-3, to=3-6]
	\arrow[two heads, from=1-1, to=3-1]
	\arrow[tail, from=3-1, to=6-3]
\end{tikzcd}.\] The uniqueness part of the universal property of pullbacks and pushouts gives faithfulness. 
\end{proof}

\begin{example}
The category $\mathcal{R}_f$ satisfies the hypothesis of Proposition 9.1. The missing object in \[\begin{tikzcd}[sep = small]
	A && \bullet && X \\
	\\
	0 && B && X/A
	\arrow[from=1-1, to=3-1]
	\arrow[tail, from=3-1, to=3-3]
	\arrow["g", tail, from=3-3, to=3-5]
	\arrow["q"', two heads, from=1-5, to=3-5]
	\arrow["f", curve={height=-18pt}, tail, from=1-1, to=1-5]
\end{tikzcd}\] can be filled in with $q^{-1}(g(B))$ which inherits its CW structure from $X$. The object $A$ is a subcomplex of $q^{-1}(g(B))$ since it is a subcomplex of $X$ that $q$ maps to the basepoint of $X/A$ which is contained in $g(B)$. The needed map from $q^{-1}(g(B))$ to $B$ is $g^{-1}q$ where $g^{-1}$ is defined on $g(B)$ and $q$ is restricted to $q^{-1}(g(B))$. We have a commutative square of the right shape and it remains to check that the left square is a pushout and the right square is biCartesian. The middle vertical map is the restriction of the map which quotients by $A$ so the left square is a pushout. A model for the pushout of $B \leftarrow q^{-1}(g(B)) \rightarrowtail X$ is $B \sqcup_{x \sim g^{-1}q(x)} X$. The process of quotienting by this relation can be described in two steps. First $A \subset X$ gets identified with the point $A \in X/A$. Next $q^{-1}(g(B))$ is bijectively identified with $B - \{A\}$. The resulting space is $X/A$. The right square is pullback since
given an extension problem of the form \[\begin{tikzcd}[sep = small]
	Z \\
	& {q^{-1}(g(B))} && X \\
	\\
	& B && {X/A}
	\arrow["h", curve={height=18pt}, from=1-1, to=4-2]
	\arrow["k"', curve={height=-18pt}, from=1-1, to=2-4]
	\arrow["q"', two heads, from=2-4, to=4-4]
	\arrow["g", tail, from=4-2, to=4-4]
	\arrow[tail, from=2-2, to=2-4]
	\arrow[two heads, from=2-2, to=4-2]
	\arrow[dashed, from=1-1, to=2-2]
\end{tikzcd}\] the unique solution is given by $k$ with restricted codomain. The morphism $k$ has image in $q^{-1}(g(B))$ since
$q(k(Z)) \subset g(B)$ by commutativity. 

As a special case, we get that the category of based finite sets with injective maps as cofibrations satisfies the hypothesis of Proposition 9.1. In fact, the arguments of this example go through for the category of based sets with cofibrations the injective maps in this category. 
\end{example}


\begin{remark}
Proto-exact categories with admissible monomorphisms as cofibrations as defined in \cite[2.4.2]{DyckerhoffKapranov12} are close to satisfying the hypothesis of Proposition 9.1. The part about the left square being a pushout is where this hypothesis conceivably might fail for a proto-exact category. Conversely, a category with cofibrations need not be exact if we say the admissible monomorphisms are cofibrations and their cokernels are admissible epimorphisms. This is because the cokernels of cofibrations do not need to be closed under composition. 
\end{remark}

\appendix
\section{Upper and lower 2-Segal}
\thispagestyle{myheadings}

Here we prove that what we have been calling left and right $2$-Segal are the same as lower and upper $2$-Segal respectively. 

\begin{definition}
Let $\mathcal{D}$ be a model category, $X \colon \Delta^{op} \to \mathcal{D}$ a simplicial object, and $0 < i < n$. Consider the squares \[\begin{tikzcd}[sep = small]
	{X_{n + 1}} && {X_n} &&&& {X_{n + 1}} && {X_n} \\
	&&&& {\text{and}} \\
	{X_n} && {X_{n - 1}} &&&& {X_n} && {X_{n - 1}}
	\arrow["{d_{n + 1}}", from=1-7, to=1-9]
	\arrow["{d_i}"', from=1-7, to=3-7]
	\arrow["{d_n}"', from=3-7, to=3-9]
	\arrow["{d_i.}", from=1-9, to=3-9]
	\arrow["{d_0}", from=1-1, to=1-3]
	\arrow["{d_0}", from=3-1, to=3-3]
	\arrow["{d_i}", from=1-3, to=3-3]
	\arrow["{d_{i + 1}}"', from=1-1, to=3-1]
\end{tikzcd}\] We say $X$ is \emph{upper} $2$-Segal if for 
any choices of $i$ and $n$, the left square is homotopy cartesian. We say $X$ is \emph{lower} $2$-Segal if for any choices of $i$ and $n$, the right square is homotopy cartesian. 
\end{definition}

\remark{We use the formulation that appears in \cite[1]{Feller19} but the first place the definition appears in the literature is \cite[2.2]{Poguntke17}.

\begin{proposition}
A simplicial object in a model category is upper $2$-Segal if and only if it is right $2$-Segal, and it is lower $2$-Segal if and only if it is left $2$-Segal. 
\end{proposition}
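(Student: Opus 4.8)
The plan is to prove the equivalence \emph{lower} $\iff$ \emph{left} and then deduce \emph{upper} $\iff$ \emph{right} for free by duality. Reversing the order of each $[n]$ is an automorphism of $\Delta$ which, on a simplicial object $X$, replaces $X$ by the precomposite $X^{\mathrm{op}}$; it interchanges $d_0$ with the top face, carries the diagonal $\{0,j\}$ to the diagonal $\{n-j,n\}$, and carries the right (lower) square of the preceding definition to the left (upper) one. Hence a proof that $X$ is lower $2$-Segal iff it is left $2$-Segal, applied to $X^{\mathrm{op}}$, shows at once that $X$ is upper $2$-Segal iff it is right $2$-Segal. Throughout I will use the pasting law for homotopy cartesian squares in both of its forms: a rectangle tiled by homotopy cartesian squares is homotopy cartesian, and if the outer rectangle and one of two adjacent pasted squares are homotopy cartesian then so is the other. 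I will also use that a square is homotopy cartesian iff its transpose is, and that the left $2$-Segal map for $(n,j)$ is an equivalence iff the square of face maps with corners $X_{\{0,\dots,n\}}$, $X_{\{0,j,\dots,n\}}$, $X_{\{0,\dots,j\}}$, $X_{\{0,j\}}$ is homotopy cartesian.

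For the direction lower $\implies$ left, I fix $n$ and $0\le j\le n$ and exhibit the left $2$-Segal square for $(n,j)$ as the outer rectangle of a grid of face maps. I index the nodes by pairs $(r,c)$ with $0\le r\le n-j$ and $0\le c\le j-1$, placing at $(r,c)$ the space $X_{\{0\}\cup\{c+1,c+2,\dots,n-r\}}$, so that the horizontal maps drop the first interior vertex $c+1$ while the vertical maps drop the current last vertex $n-r$. The four corners of the whole grid are exactly $X_{\{0,\dots,n\}}$, $X_{\{0,j,\dots,n\}}$, $X_{\{0,\dots,j\}}$, $X_{\{0,j\}}$, so the outer rectangle is the left $(n,j)$ square. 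A direct check with the simplicial identities shows that every elementary square of the grid has both horizontal maps equal to $d_1$ and both vertical maps equal to the top face, so it is the transpose of a lower $2$-Segal square with parameter $i=1$ at the appropriate level; one verifies the bound $0<1<N$ holds for each, the smallest elementary square being $X_3\to X_2\times^h_{X_1}X_2$. Iterated pasting then gives that, if $X$ is lower $2$-Segal, the outer rectangle is homotopy cartesian, i.e. the left $(n,j)$ map is an equivalence.

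For the converse I assume $X$ is left $2$-Segal and fix a lower square with parameters $n,i$, namely the square with corners $X_{\{0,\dots,n+1\}}$, $X_{\{0,\dots,n\}}$, $X_{\{0,\dots,\widehat{i},\dots,n+1\}}$, $X_{\{0,\dots,\widehat{i},\dots,n\}}$ whose horizontal maps drop the last vertex and whose vertical maps drop vertex $i$. When $i=1$ I argue by downward induction using the grid above: the grid tiling the left $(n+1,2)$ square is a single column of lower $i=1$ squares whose bottom sub-tower is the grid of the left $(n,2)$ square, so the cancellation form of the pasting law peels off the top square as a homotopy cartesian lower $(n,1)$ square, the base of the induction being $X_3\to X_2\times^h_{X_1}X_2$, which is the left $(3,2)$ map. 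For $i\ge 2$ I paste beneath the given lower square the square that drops the initial block $1,\dots,i-1$ from the vertex set $\{0,\dots,\widehat{i},\dots,n+1\}$; under the order-isomorphism of this set with $[n]$ that square is a left $2$-Segal map and hence homotopy cartesian, while the composite rectangle drops the block $1,\dots,i$ and is likewise a left $2$-Segal map. The cancellation form of the pasting law then shows the given lower square is homotopy cartesian.

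The substance of the argument, and its main obstacle, is the bookkeeping: one must verify carefully through the simplicial identities that the horizontal and vertical maps filling each grid and pasting diagram are exactly the face maps named in the definitions of left/right and upper/lower $2$-Segal, so that the elementary squares really are lower squares and the outer rectangles really are left $2$-Segal maps, and one must arrange the two inductions so that every pair $(n,i)$ with $0<i<n$ is reached. Everything else is a formal consequence of the pasting law for homotopy cartesian squares in the ambient model category.
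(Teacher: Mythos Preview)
Your overall plan is sound and close in spirit to the paper's argument: both use iterated pasting and cancellation of homotopy cartesian squares, with duality handling one of the two equivalences. Your grid tiling for the direction lower $\Rightarrow$ left is a clean packaging of what the paper does by an explicit induction, and your treatment of the $i=1$ case of left $\Rightarrow$ lower is correct.

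There is, however, a genuine gap in your $i\ge 2$ step. The square you paste beneath the lower $(n,i)$ square is \emph{not} a left $2$-Segal square in general, and neither is the resulting outer rectangle. Under your order-isomorphism $\{0,\dots,\widehat{i},\dots,n+1\}\cong[n]$, the pasted square has corners
\[
X_{[n]},\quad X_{[n-1]},\quad X_{\{0,i,i+1,\dots,n\}},\quad X_{\{0,i,i+1,\dots,n-1\}},
\]
with one direction dropping the single top vertex and the other dropping the block $\{1,\dots,i-1\}$. A left $2$-Segal square for parameters $(n,j)$ has corners $X_{[n]},X_{\{0,j,\dots,n\}},X_{[j]},X_{\{0,j\}}$, so your pasted square matches only when $i=n-1$; for $1<i<n-1$ it is just the top \emph{row} of your grid for the left $(n,i)$ condition, not the whole grid. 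The same remark applies to the outer rectangle, whose corners are $X_{[n+1]},X_{[n]},X_{\{0,i+1,\dots,n+1\}},X_{\{0,i+1,\dots,n\}}$.

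The fix is short and fits your framework: both rectangles are single columns of lower $i{=}1$ squares (of heights $i-1$ and $i$, respectively, each elementary square dropping one further interior vertex and the top vertex). Since you have already established the $i{=}1$ case from the left $2$-Segal hypothesis, both columns are homotopy cartesian by pasting, and then cancellation yields the lower $(n,i)$ square. The paper organises this differently, running an induction on the level that at each stage pastes one upper square with two already-established right $2$-Segal squares in a three-square diamond and then cancels; your grid approach is arguably tidier once the $i\ge 2$ step is repaired as above.
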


\begin{proof}
We prove the first statement; the second statement is dual. It is known that a simplicial object $Y$ is lower $2$-Segal if and only if for each $n$ the map 
$Y_n \to Y_{\{0,1,n\}} \times^h_{Y_{\{1,n\}}} Y_{\{1,2,\ldots,n\}}$ is a weak equivalence. So if $X$ is right $2$-Segal then by duality and definition we can conclude that $X$ is upper $2$-Segal. Now we turn to the converse.

We can restate our definition of right $2$-Segal in terms of homotopy pullback squares as follows. A simplicial object $X$ in a model category is right $2$-Segal exactly when the diagram \[\begin{tikzcd}[sep = small]
	{X_k} && {X_{j+1}} \\
	\\
	{X_{k - j}} && {X_1}
	\arrow["{(d_{j + 1})^{k - j - 1}}", from=1-1, to=1-3]
	\arrow["{(d_0)^j}"', from=1-1, to=3-1]
	\arrow["{(d_1)^{k - j - 1}}"', from=3-1, to=3-3]
	\arrow["{(d_0)^j}", from=1-3, to=3-3]
\end{tikzcd}\] is homotopy cartesian for all $0 < j < k$. 

Assume that $X$ is upper $2$-Segal. Then

\[\begin{tikzcd}[sep = small]
	{X_3} && {X_2} \\
	\\
	{X_2} && {X_1}
	\arrow["{d_2}", from=1-1, to=1-3]
	\arrow["{d_0}"', from=1-1, to=3-1]
	\arrow["{d_0}", from=1-3, to=3-3]
	\arrow["{d_1}"', from=3-1, to=3-3]
\end{tikzcd}\] is homotopy cartesian by assumption. In other words $X_3 \simeq X_{\{0,1,3\}} \times^h_{X_{\{1,3\}}} X_{\{1,2,3\}}$. By assumption, we have homotopy cartesian squares

\[\begin{tikzcd}[sep = small]
	{X_4} && {X_3} && {X_3} && {X_2} \\
	&&& {\text{and}} \\
	{X_3} && {X_2} && {X_2} && {X_1}
	\arrow["{d_2}", from=1-1, to=1-3]
	\arrow["{d_0}"', from=1-1, to=3-1]
	\arrow["{d_0}", from=1-3, to=3-3]
	\arrow["{d_1}"', from=3-1, to=3-3]
	\arrow["{d_2}", from=1-5, to=1-7]
	\arrow["{d_0}"', from=1-5, to=3-5]
	\arrow["{d_1}"', from=3-5, to=3-7]
	\arrow["{d_0}", from=1-7, to=3-7]
\end{tikzcd}\] so that  

\[\begin{tikzcd}[sep = small]
	{X_4} && {X_2} \\
	\\
	{X_3} && {X_1}
	\arrow["{d_2d_2}", from=1-1, to=1-3]
	\arrow["{d_0}"', from=1-1, to=3-1]
	\arrow["{d_0}", from=1-3, to=3-3]
	\arrow["{d_1d_1}"', from=3-1, to=3-3]
\end{tikzcd}\] is homotopy cartesian. Similarly, we have the homotopy cartesian square \[\begin{tikzcd}[sep = small]
	{X_4} && {X_2} \\
	\\
	{X_3} && {X_1.}
	\arrow["{d_0d_0}", from=1-1, to=1-3]
	\arrow["{d_0d_0}"', from=3-1, to=3-3]
	\arrow["{d_3}"', from=1-1, to=3-1]
	\arrow["{d_1}", from=1-3, to=3-3]
\end{tikzcd}\]

For $n \leq 5$ we have verified that for $0 < j < k < n$, the square \[\begin{tikzcd}[sep = small]
	{X_k} && {X_{j+1}} \\
	\\
	{X_{k - j}} && {X_1}
	\arrow["{(d_{j + 1})^{k - j - 1}}", from=1-1, to=1-3]
	\arrow["{(d_0)^j}"', from=1-1, to=3-1]
	\arrow["{(d_1)^{k - j - 1}}"', from=3-1, to=3-3]
	\arrow["{(d_0)^j}", from=1-3, to=3-3]
\end{tikzcd}\] is homotopy cartesian. To use induction, assume that for some $n \geq 5$ the above square is homotopy cartesian for all $0 < j < k < n$. Because $X$ is upper $2$-Segal, \[\begin{tikzcd}[sep = small]
	{X_n} && {X_{n - 1}} \\
	\\
	{X_{n - 1}} && {X_{n - 2}}
	\arrow["{d_0}", from=1-1, to=1-3]
	\arrow["{d_3}"', from=1-1, to=3-1]
	\arrow["{d_0}"', from=3-1, to=3-3]
	\arrow["{d_2}", from=1-3, to=3-3]
\end{tikzcd}\] is homotopy cartesian. By inductive assumption, the square \[\begin{tikzcd}[sep = small]
	{X_{n - 1}} && {X_2} \\
	\\
	{X_{n - 2}} && {X_1}
	\arrow["{(d_2)^{n - 3}}", from=1-1, to=1-3]
	\arrow["{d_0}"', from=1-1, to=3-1]
	\arrow["{(d_1)^{n - 3}}"', from=3-1, to=3-3]
	\arrow["{d_0}", from=1-3, to=3-3]
\end{tikzcd}\] is homotopy cartesian so we get the homotopy cartesian square \[\begin{tikzcd}[sep = small]
	{X_n} && {X_{n - 1}} \\
	\\
	{X_2} && {X_1.}
	\arrow["{d_0}", from=1-1, to=1-3]
	\arrow["{(d_2)^{n - 2}}"', from=1-1, to=3-1]
	\arrow["{(d_1)^{n - 2}}", from=1-3, to=3-3]
	\arrow["{d_0}"', from=3-1, to=3-3]
\end{tikzcd}\] If $j \neq 1$, then by inductive assumption the square \[\begin{tikzcd}[sep = small]
	{X_{n - 1}} && {X_j} \\
	\\
	{X_{n - j}} && {X_1}
	\arrow["{(d_j)^{n - j - 1}}", from=1-1, to=1-3]
	\arrow["{(d_0)^{j - 1}}"', from=1-1, to=3-1]
	\arrow["{(d_1)^{n - j - 1}}"', from=3-1, to=3-3]
	\arrow["{(d_0)^{j - 1}}", from=1-3, to=3-3]
\end{tikzcd}\] is homotopy cartesian. By inductive assumption the square \[\begin{tikzcd}[sep = small]
	{X_{j + 1}} && {X_2} \\
	\\
	{X_j} && {X_1}
	\arrow["{d_0}"', from=1-1, to=3-1]
	\arrow["{(d_2)^{j - 1}}", from=1-1, to=1-3]
	\arrow["{d_0}", from=1-3, to=3-3]
	\arrow["{(d_1)^{j-1}}"', from=3-1, to=3-3]
\end{tikzcd}\] is homotopy cartesian. Putting the previous three squares together we have the commutative diagram \[\begin{tikzcd}
	&&&& {X_n} \\
	\\
	&& {X_{j+1}} &&&& {X_{n-1}} \\
	\\
	{X_2} &&&& {X_j} &&&& {X_{n - j}.} \\
	\\
	&& {X_1} &&&& {X_1}
	\arrow["{(d_{j+1})^{n - j - 1}}"', from=1-5, to=3-3]
	\arrow["{d_0}", from=1-5, to=3-7]
	\arrow["{d_0}"', from=3-3, to=5-5]
	\arrow["{(d_j)^{n - j - 1}}", from=3-7, to=5-5]
	\arrow["{(d_2)^{j - 1}}"', from=3-3, to=5-1]
	\arrow["{(d_1)^{j - 1}}", from=5-5, to=7-3]
	\arrow["{d_0}"', from=5-1, to=7-3]
	\arrow["{(d_0)^{j - 1}}"', from=5-5, to=7-7]
	\arrow["{(d_0)^{j - 1}}", from=3-7, to=5-9]
	\arrow["{(d_1)^{n - j - 1}}", from=5-9, to=7-7]
\end{tikzcd}\] The left and center squares together form a homotopy cartesian rectangle, and the left and right squares are homotopy cartesian, so the rectangle formed by the right and center squares is homotopy cartesian. By induction we conclude that $X$ is right $2$-Segal. 
  
\end{proof}

\end{document}